\documentclass[11pt,twoside,reqno]{amsart}
\usepackage{hyperref}
\usepackage[a4paper]{geometry}
\usepackage{graphicx}
\usepackage{color}
\usepackage{amssymb,amsfonts}
\usepackage{epstopdf}
\usepackage{enumerate}
\usepackage{times}
\usepackage{nicefrac}
\usepackage{bbm}
\usepackage{epsfig}

\usepackage[normalem]{ulem}

\usepackage{amsmath}
\usepackage{amsthm}
\newtheorem{thm}{Theorem}
\numberwithin{thm}{section}
\newtheorem{lem}[thm]{Lemma}
\newtheorem{prop}[thm]{Proposition}
\newtheorem{cor}[thm]{Corollary}
\newtheorem{conj}[thm]{Conjecture}
\theoremstyle{remark}
\newtheorem{rem}[thm]{Remark}

\newcommand{\conv}{\mathop{\mathrm{conv}}}
\newcommand{\vertexset}{\mathop{\mathrm{vert}}}
\newcommand{\nnegrank}{\mathop{\mathrm{rank}_+}} 
\newcommand{\rk}{\mathop{\mathrm{rank}}}
\newcommand{\rc}{\mathop{\mathrm{rc}}} 
\newcommand{\supp}{\mathop{\mathrm{supp}}} 
\newcommand{\suppmat}{\mathop{\mathrm{suppmat}}} 

\newcommand{\RR}{\mathbb{R}}

\newcommand{\lt}{\left}
\newcommand{\rt}{\right}

\renewcommand{\le}{\leqslant}
\renewcommand{\ge}{\geqslant}

\newcommand{\thegraph}{rectangle graph}
\newcommand{\theG}[1]{G(#1)}
\newcommand{\mnfx}{extension complexity}
\newcommand{\xc}[1]{\mathop{\mathrm{xc}}(#1)}
\newcommand{\nvtx}[1]{\lt\lvert#1\rt\rvert}


\DeclareMathOperator{\POp}{P}
\DeclareMathOperator{\matchOp}{match}
\newcommand{\PMatch}[1]{\POp_{\matchOp}({#1})}
\DeclareMathOperator{\stabOp}{stab}
\newcommand{\PStab}[1]{\POp_{\stabOp}({#1})}
\DeclareMathOperator{\edgeOp}{edge}
\newcommand{\PEdge}[1]{\POp_{\edgeOp}({#1})}
\newcommand{\ints}[1]{[{#1}]}

\newcommand{\R}{\mathbb{R}}
\newcommand{\facLat}[1]{\mathcal{L}({#1})}
\newcommand{\setDef}[2]{\{{#1}\,:\,{#2}\}}
\newcommand{\RNonNeg}{\R_+}
\newcommand{\unitVec}[1]{\mathbbm{e}_{#1}}

\DeclareMathOperator{\affOp}{aff}
\newcommand{\aff}[1]{\affOp({#1})}

\newcommand{\scalProd}[2]{\langle{#1},{#2}\rangle}

\begin{document}

\title{Combinatorial Bounds on Nonnegative Rank and Extended Formulations}%
\author[Fiorini]{Samuel Fiorini}%
\address{Samuel Fiorini: Department of Mathematics, Universit\'e libre de Bruxelles, Belgium}
\email{sfiorini@ulb.ac.be}%
\author[Kaibel]{Volker Kaibel}%
\address{Volker Kaibel, Kanstantsin Pashkovich, Dirk Oliver Theis: Faculty of Mathematics, Otto von Guericke Universit\"at Magdeburg, Germany}
\email{kaibel@ovgu.de,pashkovi@mail.math.uni-magdeburg.de,theis@ovgu.de}%
\author[Pashkovich]{Kanstantsin Pashkovich}%
\author[Theis]{Dirk Oliver Theis}%

\date{\today}%

%

\begin{abstract}
	An extended formulation of a polytope~$P$ is a system of linear inequalities and equations that describe some polyhedron which can be projected onto~$P$.  Extended formulations of small size (i.e., number of inequalities)  are of interest, as they allow to model corresponding optimization problems as linear programs of small sizes.  In this paper, we describe several aspects and new results on the main known approach to establish lower bounds on the sizes of extended formulations, which is to bound  from below the number of rectangles needed to cover the support of a slack matrix of the polytope. Our main goals are to shed some light on the question  how this combinatorial rectangle covering bound compares to other bounds known from the literature, and to obtain a better idea of the power as well as of the limitations of this bound.
	In particular, we provide geometric interpretations (and a slight sharpening) of Yannakakis'~\cite{Yannakakis91} result on the relation between  minimal sizes of  extended formulations and the nonnegative rank of slack matrices, and we describe the fooling set bound on the nonnegative rank (due to Dietzfelbinger et al.~\cite{DietzfelbingerHromkovicSchnitger96})  as the clique number of a certain graph. Among other results, we prove that both the cube as well as the Birkhoff polytope do not admit extended formulations with fewer inequalities than these polytopes have facets, and we show that every extended formulation of a $d$-dimensional neighborly polytope with $\Omega(d^2)$ vertices has size~$\Omega(d^2)$. 
\end{abstract}


\renewcommand{\thesubsubsection}{\thesubsection.(\alph{subsubsection})}      
\setcounter{tocdepth}{7}                                                     

\setcounter{page}{1}

\maketitle




\section{Introduction}

An \emph{extended formulation} of a polytope\footnote{For all basic concepts and results on polyhedra and polytopes we refer
  to~\cite{Ziegler}.}~$P\subseteq\R^d$ is a system of linear inequalities and linear equations defining a polyhedron~$Q\subseteq\R^e$ along with an affine map
$\pi:\R^e\rightarrow\R^d$ satisfying $\pi(Q)=P$. Note that by translating $P$ and/or $Q$, we may safely assume that $\pi$ is linear. The \emph{size} of the
extended formulation is the number of inequalities in the system (the number of equations is ignored, since one can easily get rid of them). As in this setting
linear optimization over~$P$ can be done by linear optimization over~$Q$, one is interested in finding small (and simple) extended formulations.
Fig.~\ref{fig:8gon} illustrates an extended formulation of size~$6$ in~$\R^3$ of a regular $8$-gon.
\begin{figure}[ht]
	\input{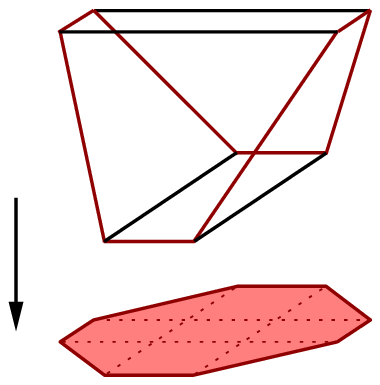_t}
	\caption{A regular $8$-gon as a projection of a cube.\label{fig:8gon}}
\end{figure}
In fact, every regular $2^k$-gon has an extended formulation of size~$2k$~\cite{Ben-TalNemirovski01,KaibelPashkovich11}, which can, e.g., be exploited in order
to approximate second-order cones.

However, several polytopes associated with combinatorial optimization problems have surprisingly small extended formulations (for recent survey articles we
refer to~\cite{ConfortiCornuejolsZambelli10,VanderbeckWolsey2010,Kaibel11}). Among the nicest examples are extended formulations of size $O(n^3)$ for the
spanning tree polytopes of complete graphs with~$n$ nodes (due to Martin~\cite{KippMartin91}) and of size $O(n\log n)$ for the permutahedron associated with the
permutations of~$n$ elements (due to Goemans~\cite{Goemans09}).
It may not be very surprising that no polynomial size extended formulations of polytopes associated with NP-hard optimization problems like the traveling
salesman polytope are known.
However, the same holds, e.g., for matching polytopes to this day.  While it may well be that we simply still have missed the right techniques to construct
polynomial size extended formulations for the latter polytopes,
most people would probably agree that for the first ones we simply still miss the right techniques to prove lower bounds on the sizes of extended formulations
of concrete polytopes.
Note, however, that Rothvo\ss~\cite{Rothvoss11} recently established, by an elegant counting argument, the existence of 0/1-polytopes $P$ in
$\mathbb{R}^d$ such that the size of \emph{every} extended formulation of $P$ is exponential in $d$.

There is a beautiful approach due to Yannakakis~\cite{Yannakakis91} for deriving lower bounds on the sizes of extended formulations that share symmetries of the
polytope to be described. In fact, Yannakakis proved that neither the traveling salesman polytopes nor the matching polytopes associated with complete graphs
admit polynomial size extended formulations that are invariant under permuting the nodes of the graph. These techniques can, for instance, be extended to prove
that the same holds for the polytopes associated with cycles or matchings of logarithmic size in complete graphs, which do, however, have non-symmetric extended
formulations of polynomial size~\cite{KaibelPashkovichTheis10}, as well as to show that the permutahedra do not have symmetric extended formulations of
sub-quadratic size~\cite{Pashkovich09}.

Asking for the smallest size~$\xc{P}$ of an arbitrary extended formulation of a polytope~$P$, one finds that it suffices to consider extended formulations that
define \emph{bounded} polyhedra~$Q$, i.e., polytopes (see~\cite{FaenzaFioriniGrappeTiwary11}, \cite{Kaibel11}). Indeed, if~$Q$ is an unbounded
polyhedron that is mapped to the polytope~$P$ by the linear map~$\pi$, then the recession cone of~$Q$ is contained in the kernel of~$\pi$ (as~$\pi(Q)=P$ is
bounded). Thus, the pointed polyhedron~$Q'=Q\cap L$ with~$L$ being the orthogonal complement of the lineality space of~$Q$ satisfies $\pi(Q')=P$ as well. If $Q'$ is bounded, we are done. Otherwise, choosing some vector~$a$ that satisfies $\scalProd{a}{x}>0$ for all non-zero elements of the recession cone of~$Q'$ and some $\beta\in\R$ such that
$\beta>\scalProd{a}{v}$ holds for all vertices~$v$ of~$Q'$, we have that $Q'' := \setDef{x\in Q'}{\scalProd{a}{x}=\beta}$ is a polytope with
$\pi(Q'')=P$.
  
Therefore, defining an \emph{extension} of a polytope~$P$ to be a \emph{polytope}~$Q$ along with an affine projection that maps~$Q$ to~$P$, the extension complexity~$\xc{P}$ of~$P$ equals the minimal number of facets of all extensions of~$P$ (where we can even restrict our attention to full dimensional extensions). Note that since a polytope is the set of all convex combinations of its vertices, it has a simplex with the same number of vertices as an extension, showing that~$\xc{P}$ is at most the minimum of the numbers of vertices and facets of~$P$. 

In the same paper~\cite{Yannakakis91} where he established the above mentioned lower bounds on \emph{symmetric} extended formulations for matching and traveling
salesman polytopes, Yannakakis essentially also showed that the extension complexity of a polytope~$P$ equals the \emph{nonnegative rank} of a \emph{slack
  matrix} of~$P$, where the latter is a nonnegative matrix whose rows and columns are indexed by the facets and vertices of~$P$, respectively, storing in each
row the slacks of the vertices in an inequality defining the respective facet, and the nonnegative rank of a nonnegative matrix~$M$ is the smallest number~$r$
such that~$M$ can be written as a product of two nonnegative matrices with~$r$ columns and $r$~rows, respectively.
(See \cite{LauMarkham78}, or \cite{BeasleyLaffey09} and the references therein.)
Such a nonnegative factorization readily induces a covering of the set~$\supp(M)$ of non-zero positions of~$M$ by~$r$ \emph{rectangles}, i.e., sets formed as
the cartesian product of subsets of the row and of the column indices. Thus, the smallest number of rectangles that cover~$\supp(M)$ (the \emph{rectangle
  covering number} of~$M$) is a lower bound on the nonnegative rank of~$M$, and hence, on the extension complexity of~$P$ if~$M$ is the slack matrix of~$P$.
Rectangle covering is the same concept as covering the edges of a bipartite graph by bicliques (e.g.,~\cite{DohertyLundgrenSiewert99}), and it also coincides
with factorizing a Boolean matrix (e.g., \cite{CaenGregoryPullman81,GregoryPullman83}).

In fact, almost all techniques to find lower bounds on the extension complexity of polytopes are combinatorial in the sense that they yield lower bounds on the
rectangle covering numbers of slack matrices rather than exploiting the true numbers in the matrices. The main goal of this paper is to present current
knowledge on the concept of rectangle covering numbers in particular with respect to its relation to the extension complexity of polytopes and to add several
results both on lower bounds on rectangle covering number (providing lower bounds on the extension complexity of certain polytopes) as well as on upper bounds
(revealing limitations of this approach to find lower bounds on the extension complexity of polytopes). Though the paper is written from the geometric point of
view, we hope that it may also be useful for people working on questions concerning the nonnegative rank from a more algebraic point of view.

The paper starts by describing (in Section~\ref{sec:projAndDerive}) the relation between rectangle coverings and extensions both in the pure combinatorial
setting of embeddings of face lattices as well as via the characterization of the extension complexity as the nonnegative rank of slack matrices.  Here, we also
give geometric explanations of Yannakakis' algebraic result relating extensions and nonnegative factorizations. We then give some results on upper bounds on
rectangle covering numbers. For instance, we show that the rectangle covering number of slack matrices of polytopes with~$n$ vertices and at most~$k$ vertices
in every facet is bounded by $O(k^2\log n)$ (Proposition~\ref{prop:UB_few_vtcs_per_facet}). In Section~\ref{sec:CC} we briefly describe the connection between
the rectangle covering number of a nonnegative matrix and deterministic as well as nondeterministic communication complexity. While the relation to
deterministic communication complexity theory had already been used by Yannanakis~\cite{Yannakakis91}, recent results of Huang and
Sudakov~\cite{HuangSudakov2010} on nondeterministic communication complexity have interesting implications for questions concerning extension complexity as
well. In Section~\ref{sec:lb} we then present several techniques to derive lower bounds on rectangle covering numbers, as well as some new results obtained by
using these techniques. We start by proving that the logarithm of the number of faces of a polytope~$P$ is not only a lower bound on its extension complexity
(as already observed by Goemans~\cite{Goemans09}) but also on the rectangle number of a slack matrix of~$P$. Observing that the rectangle covering number of a
nonnegative matrix is the chromatic number of the \emph{rectangle graph} of the matrix, we interpret the \emph{fooling set bound} (see Dietzfelbinger et
al.~\cite{DietzfelbingerHromkovicSchnitger96}) as the clique number of that graph. Exploiting that bound, we show, for instance, that for both the cube as well
as for the Birkhoff polytope the extension complexity equals the number of facets (Propositions~\ref{prop:lb:clique:cube} and~\ref{prop:lb:birhoff}). However,
we also show that for no $d$-dimensional polytope the clique bound can yield a better lower bound on the extension complexity than $(d+1)^2$. We also treat (in
Section~\ref{ssec:lb:alpha-goof}) the relation between the independence ratio of the rectangle graph and the concept of \emph{generalized fooling sets} (due to
Dietzfelbinger et al.~\cite{DietzfelbingerHromkovicSchnitger96}), and we prove that for $d$-dimensional neighborly polytopes with $\Omega(d^2)$ vertices the
rectangle covering number (and thus the extension complexity) is bounded from below by $\Omega(d^2)$.

We close this introduction with a few remarks on notation. We only distinguish between row and column vectors in the context of matrix multiplications, where, as usual, vectors are meant to be understood as column vectors.  For a matrix~$M$, we denote by $M_{i,\star}$ and $M_{\star,j}$ the vectors in the $i$-th row and in the $j$-th column of~$M$, respectively. All logarithms are meant to refer to base two.

\section{Projections and Derived Structures}\label{sec:projAndDerive}


\subsection{Lattice Embeddings}

%


Let us denote by~$\facLat{P}$ the \emph{face lattice} of a polytope~$P$, i.e., the set of faces of~$P$ (including the \emph{non-proper} faces $\varnothing$ and~$P$ itself) partially ordered by inclusion.  
The following proposition describes the relation between the face lattices of two polytopes one of which is a projection of the other. The statement certainly is not new, but as we are not aware of any explicit reference for it, we include a brief proof. An \emph{embedding} of a partially ordered set $(S,\leqslant)$ into a partially ordered set $(T,\sqsubseteq)$ is a map $f$ such that $u \leqslant v$ if and only if $f(u) \sqsubseteq f(v)$. Notice that every embedding is injective.

\begin{prop}\label{prop:embeddingFacLat}
 If~$Q\subseteq\R^e$ and~$P\subseteq\R^d$ are two polytopes, and~$\pi:\R^e\rightarrow\R^d$ is an affine map with $\pi(Q)=P$, then the map that assigns $h(F) := Q \cap\pi^{-1}(F)$ to each face~$F$ of~$P$ defines an embedding of~$\facLat{P}$ into~$\facLat{Q}$.
\end{prop}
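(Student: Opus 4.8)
The plan is to verify in turn the three things an embedding of $\facLat{P}$ into $\facLat{Q}$ requires: that $h(F)$ is indeed a face of $Q$ for every face $F$ of $P$, that $h$ is order-preserving, and that $h$ is order-reflecting; the last of these automatically yields injectivity, as noted just before the statement. The first point carries almost all the weight, and even it is short.

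First I would handle the non-proper faces: $h(\varnothing) = Q\cap\pi^{-1}(\varnothing) = \varnothing$, and, since $\pi(Q)=P$, also $h(P) = Q\cap\pi^{-1}(P) = Q$; both of these are faces of $Q$. For a nonempty face $F$ of $P$, pick $c\in\R^d$ such that $F$ is exactly the set of points of $P$ maximizing the linear functional $\scalProd{c}{\cdot}$, and let $\delta$ be that maximum value. Since $\pi$ is affine, the map $y\mapsto\scalProd{c}{\pi(y)}$ is an affine functional on $\R^e$; because $\pi(Q)=P$, its maximum over $Q$ is again $\delta$, and its set of maximizers over $Q$ is precisely $\setDef{y\in Q}{\pi(y)\in F} = Q\cap\pi^{-1}(F) = h(F)$. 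Hence $h(F)$ is a face of $Q$.

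Order-preservation is immediate: $F_1\subseteq F_2$ implies $\pi^{-1}(F_1)\subseteq\pi^{-1}(F_2)$ and thus $h(F_1)\subseteq h(F_2)$. For order-reflection, assume $h(F_1)\subseteq h(F_2)$ and take $x\in F_1$; since $x\in P=\pi(Q)$ there is some $y\in Q$ with $\pi(y)=x$, so $y\in Q\cap\pi^{-1}(F_1)=h(F_1)\subseteq h(F_2)$, which forces $x=\pi(y)\in F_2$. Hence $F_1\subseteq F_2$, and $h$ is an embedding.

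The one place where care is needed — and what I would flag as the crux — is the order-reflection step: it genuinely uses the surjectivity $\pi(Q)=P$ (not merely $\pi(Q)\subseteq P$), since we need every $x\in F_1$ to be the image of some $y\in Q$ lying in $h(F_1)$. This is precisely what the hypothesis supplies, so nothing further is required.
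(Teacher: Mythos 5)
Your proof is correct and follows essentially the same route as the paper's: show that $h(F)$ is a face of $Q$ by transporting a linear/affine inequality defining $F$ through $\pi$, and then verify the order-embedding property directly (which the paper compresses into ``clearly an embedding''). You just spell out the non-proper faces and the order-reflection step, which the paper leaves implicit.
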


\begin{proof}
 For any face~$F$ of~$P$ defined by an inequality $\scalProd{a}{x}\leqslant\beta$, we have $\scalProd{a}{\pi(y)} \leqslant \beta$ for all $y\in Q$ with equality if and only if $\pi(y) \in F$ holds. Thus, $h(F)$ is indeed a face of~$Q$, defined by the linear inequality $\scalProd{a}{\pi(y)} \leqslant \beta$. Moreover, $h$ is clearly an embedding. 
\end{proof}

\begin{rem}
 From the definition of $h$, we see that $h(F\cap G) = h(F)\cap h(G)$ holds for all faces $F,G$. 
We call a lattice embedding $h$ with this property \emph{meet-faithful}. Clearly, not all lattice embeddings are meet-faithful.
\end{rem}

Figure~\ref{fig:embedding} illustrates the embedding from Proposition~\ref{prop:embeddingFacLat}. In the figure, $P$ is a $4$-dimensional cross-polytope, $Q$ is a $7$-dimensional simplex and $\pi$ is any affine projection mapping the $8$ vertices of $Q$ to the $8$ vertices of $P$. Denoting by $\unitVec{i}$ the $i$th unit vector, we have $Q = \conv \{0,\unitVec{1},\ldots,\unitVec{7}\} \subseteq \mathbb{R}^7$, $P = \conv\{\unitVec{1},-\unitVec{1},\dots,\unitVec{4},-\unitVec{4}\} \subseteq \mathbb{R}^4$ and for instance $\pi(0) = \unitVec{1}$, $\pi(\unitVec{1}) = -\unitVec{1}$, \ldots, $\pi(\unitVec{7}) = -\unitVec{4}$. As the figure suggests correctly, constructing a small extended formulation for a polytope~$P$ means to hide the facets of~$P$ in the fat middle part of the face lattice of an extension with few facets.

\begin{figure}[ht]
 \centerline{\includegraphics[height=10cm]{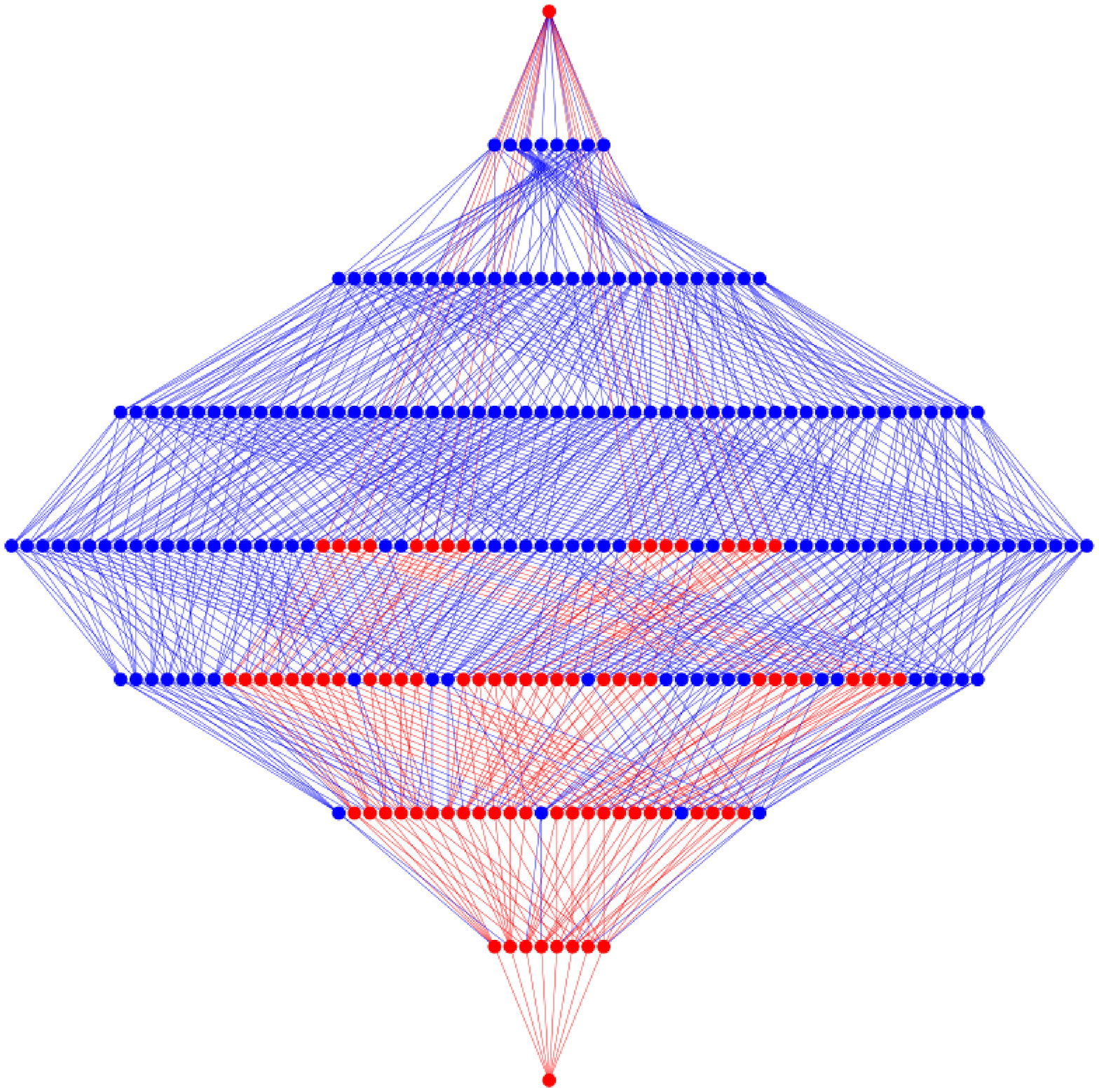}}
 \caption{Embedding of the face lattice of the $4$-dimensional cross-polytope into the face lattice of the $7$-dimensional simplex.\label{fig:embedding}}
\end{figure}

\subsection{Slack Representations}

Let~$P$ be a polytope in $\R^d$. Let $Ax \leqslant b$ be a system of linear inequalities such that
\begin{equation*}
P = \setDef{x \in \R^d}{Ax \leqslant b}\,,
\end{equation*}
and let $m$ denote the number of inequalities involved in  $Ax \leqslant b$. The \emph{slack vector} of a point $x \in \R^d$ is the vector $b - Ax \in \R^m$, and  the \emph{slack map} of~$P$ w.r.t.\ $Ax \leqslant b$ is  the affine map $\sigma: \R^d \rightarrow \R^m$ that maps each vector to its slack vector. We call the polytope $\tilde{P} := \sigma(P)$, which is the image of $P$ under the slack map $\sigma$, the \emph{slack representation} of $P$ w.r.t.\ $Ax \leqslant b$. Clearly the affine hull $\aff{\tilde{P}}$ of $\tilde{P}$ is the image of the affine hull $\aff{P}$ of $P$ under $\sigma$. We call $\aff{\tilde{P}}$ the \emph{slack space} of $P$ w.r.t.\ $Ax \leqslant b$. Note that the polytope $\tilde{P}$ is affinely isomorphic to the polytope $P$ and $\tilde{P} = \aff{\tilde{P}} \cap \RNonNeg^m$. 

 A set of $r$ nonnegative vectors $T := \{t_1,\ldots,t_r\} \subseteq \RNonNeg^m$ is called a \emph{slack generating set} of~$P$ w.r.t.\ $Ax \leqslant b$ of size $r$ if  every point in $\tilde{P}$ can be expressed as a nonnegative combination of the vectors in~$T$. Then the following system  
\begin{equation*}
	\sum_{k = 1}^r \lambda_k t_k\in\aff{\tilde{P}} \quad\text{and}\quad  \lambda_k\geqslant 0 \text{ for all }k \in [r]
\end{equation*}
provides an extended formulation of~$\tilde{P}$ of size~$r$ via the projection map $\lambda\mapsto\sum_{k=1}^r \lambda_k t_k$. Since $P$ and $\tilde{P}$ are affinely isomorphic, this also yields an extension of~$P$ of size~$r$, which is called a \emph{slack extension} of $P$ w.r.t.\ $Ax \leqslant b$.


\begin{lem}
\label{lem:dim(P)_at_least_1}
If the inequality $\scalProd{c}{x} \leqslant\delta$ is valid for a polytope 
$P := \{x \in \R^d \mid Ax \leqslant b\}$ with $\dim(P) \geqslant 1$, then it is a nonnegative combination of the inequalities of the system $Ax \leqslant b$ defining $P$.
\end{lem}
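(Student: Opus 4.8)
The plan is to reduce to the affine form of Farkas' lemma and then close the small gap that Farkas alone leaves, using the hypothesis $\dim(P)\ge 1$ together with the boundedness of $P$. Since the system $Ax\le b$ is feasible, Farkas (a standard consequence of LP duality) says that validity of $\scalProd{c}{x}\le\delta$ for $P$ is equivalent to the existence of $y_0\in\RNonNeg^m$ with $\transpose{A}y_0=c$ and $\delta_0:=\transpose{b}y_0\le\delta$. This already exhibits the (possibly strictly stronger) inequality $\scalProd{c}{x}\le\delta_0$ as a nonnegative combination of the rows of $Ax\le b$, so it remains to produce, again from the rows of the system, a nonnegative combination that yields the inequality $\scalProd{\zeroVec{d}}{x}\le\delta-\delta_0$; adding the two coefficient vectors will then finish the proof. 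Concretely, it suffices to exhibit $z\in\RNonNeg^m$ with $\transpose{A}z=\zeroVec{d}$ and $\transpose{b}z>0$, since then $y:=y_0+\frac{\delta-\delta_0}{\transpose{b}z}\,z$ lies in $\RNonNeg^m$ (the scalar is nonnegative because $\delta\ge\delta_0$) and satisfies $\transpose{A}y=c$, $\transpose{b}y=\delta$.

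To build such a $z$, I would exploit that $P$ is a polytope, hence bounded: its recession cone $\setDef{v}{Av\le\zeroVec{m}}$ is $\{\zeroVec{d}\}$, which by conic duality forces $\ccone{\{a_1,\dots,a_m\}}=\R^d$, where $a_1,\dots,a_m$ denote the rows of $A$. Writing each $-a_j=\sum_i\mu^{(j)}_i a_i$ with $\mu^{(j)}\in\RNonNeg^m$ and setting $z^\ast:=\sum_{j=1}^m(\unitVec{j}+\mu^{(j)})$, one gets $z^\ast>0$ (every coordinate is at least $1$) and $\transpose{A}z^\ast=\zeroVec{d}$. Applying Farkas once more to the valid inequality $\scalProd{\zeroVec{d}}{x}\le\transpose{b}z^\ast$ gives $\transpose{b}z^\ast\ge 0$, so the only case to exclude is $\transpose{b}z^\ast=0$. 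But in that case, for every $x\in P$ we have $\sum_i z^\ast_i\,(b_i-\scalProd{a_i}{x})=\transpose{b}z^\ast-\scalProd{\transpose{A}z^\ast}{x}=0$, and since $z^\ast_i>0$ and each $b_i-\scalProd{a_i}{x}\ge 0$, every summand vanishes; hence $Ax=b$ on all of $P$, so $P=\setDef{x}{Ax=b}$ is an affine subspace. A bounded affine subspace is a single point, contradicting $\dim(P)\ge 1$. Therefore $\transpose{b}z^\ast>0$ and we may take $z=z^\ast$.

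The only real obstacle is this last step: plain Farkas only delivers $\scalProd{c}{x}\le\delta_0$ for some $\delta_0\le\delta$, and it is precisely here that $\dim(P)\ge 1$ enters — indeed the statement fails without it, as witnessed by $P=\{0\}\subseteq\R$ described by $x\le 0$ and $-x\le 0$, for which $0\le 1$ is valid but is no nonnegative combination of these two inequalities. Everything else in the argument is routine linear algebra and duality.
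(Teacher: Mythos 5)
Your proof is correct, and it takes a genuinely different route from the paper's. Both arguments start from the affine Farkas lemma --- which, because $P\neq\varnothing$, produces a nonnegative multiplier vector $y_0$ with $\transpose{A}y_0=c$ and $\transpose{b}y_0=\delta_0\leqslant\delta$ --- and both then reduce to producing a nonnegative $z$ with $\transpose{A}z=0$ and $\transpose{b}z>0$, i.e.\ synthesizing an inequality $0\leqslant\gamma$ with $\gamma>0$ from the rows of the system. Where you diverge is in how that $z$ is found. The paper \emph{localizes}: using $\dim(P)\geqslant 1$ it picks a single row index $i$ whose functional $A_{i,\star}$ is non-constant on $P$ and attains its upper bound $b_i$ on $P$, applies Farkas once more to the reversed inequality $-\scalProd{A_{i,\star}}{x}\leqslant -b_i'$ (with $b_i'$ the minimum of that functional over $P$), and then adds back $\scalProd{A_{i,\star}}{x}\leqslant b_i$ and rescales to obtain $0\leqslant 1$. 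You instead \emph{globalize}: boundedness of $P$ gives $\setDef{v}{Av\leqslant 0}=\{0\}$, whence by polarity $\ccone{\{a_1,\dots,a_m\}}=\R^d$, which lets you build a strictly positive $z^\ast$ with $\transpose{A}z^\ast=0$; then $\transpose{b}z^\ast\geqslant 0$ is immediate, and $\transpose{b}z^\ast=0$ would force every slack $b_i-\scalProd{a_i}{x}$ to vanish identically on $P$, collapsing $P$ to a single point and contradicting $\dim(P)\geqslant 1$. Your route leans more on boundedness (via the conic-duality step) and sidesteps having to argue for the existence of a ``good'' row, which the paper asserts but does not fully justify; the paper's route is a bit shorter and stays closer to Farkas. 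Both are sound, and your counterexample $P=\{0\}\subseteq\R$ correctly shows the dimension hypothesis cannot be dropped.
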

\begin{proof}
By Farkas's lemma, because $P \neq \varnothing$, there exists $\delta' \leqslant \delta$ such that $\scalProd{c}{x} \leqslant \delta'$ is a nonnegative combination of the inequalities of $Ax \leqslant b$. Now let $i$ be an index such that $\min \{\scalProd{A_{i,\star}}{x} \mid x \in P\} < \max \{\scalProd{A_{i,\star}}{x} \mid x \in P\} = b_i$. Such an index exists because $\dim(P) \geqslant 1$. Letting $b'_i := \min \{\scalProd{A_{i,\star}}{x} \mid x \in P\}$, we see that the inequality $-\scalProd{A_{i,\star}}{x} \leqslant -b'_i$ is valid for $P$. By Farkas's lemma, because of the minimality of $b'_i$, this last inequality is a nonnegative combination of the inequalities of $Ax \leqslant b$. By adding the inequality $\scalProd{A_{i,\star}}{x} \leqslant b_i$ and then scaling by $(b_i-b'_i)^{-1}$, we infer that $0 \leqslant 1$ is a nonnegative combination of the inequalities of $Ax \leqslant b$. After scaling by $\delta-\delta'$ and adding the inequality $\scalProd{c}{x} \leqslant \delta'$, we conclude that $\scalProd{c}{x} \leqslant \delta$ is a nonnegative combination of the inequalities of $Ax \leqslant b$.
\end{proof}

\begin{lem}
\label{lem:slack_gen}
Let $P := \{x \in \R^d \mid Ax \leqslant b\}$ be a polytope with $\dim(P) \geqslant 1$. If the polytope $P$ has $f$ facets, then $P$ has a slack generating set of size $f$ w.r.t.\ $Ax \leqslant b$.
\end{lem}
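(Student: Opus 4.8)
The plan is to relate the slack map $\sigma$ of $P$ with respect to the given system $Ax \le b$ to the slack map of an irredundant description of $P$, and then to read the generating set off from the nonnegative combination coefficients furnished by Lemma~\ref{lem:dim(P)_at_least_1}.

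First I would fix an irredundant description of $P$ (see~\cite{Ziegler}): one facet-defining inequality for each of the $f$ facets, together with, in case $P$ is not full-dimensional, a set of $2k$ inequalities whose common solution set is $\aff{P}$, where $k := d - \dim(P)$ and each of the $k$ defining equations is written as two opposite inequalities. Call this system $\bar A \bar x \le \bar b$; it has $f + 2k$ rows, it satisfies $P = \setDef{x}{\bar A x \le \bar b}$, and its first $f$ rows are the facet inequalities. Write $\bar\sigma(x) := \bar b - \bar A x$ for the associated slack map into $\R^{f+2k}$.

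Next, every row $\scalProd{A_{i,\star}}{x} \le b_i$ of the original system is valid for $P$; since $\dim(P) \ge 1$, Lemma~\ref{lem:dim(P)_at_least_1}, applied with $\bar A x \le \bar b$ in the role of the defining system, writes it as a nonnegative combination of the rows of $\bar A x \le \bar b$. Collecting these coefficient vectors as the rows of a matrix $C \in \RNonNeg^{m \times (f+2k)}$ yields $A = C\bar A$ and $b = C\bar b$, hence $\sigma(x) = b - Ax = C(\bar b - \bar A x) = C\,\bar\sigma(x)$ for all $x$. Now, for $x \in P$ the last $2k$ coordinates of $\bar\sigma(x)$ vanish (they are slacks of inequalities that hold with equality throughout $P$), while its first $f$ coordinates are nonnegative; thus $\bar\sigma(x)$ is a nonnegative combination of the first $f$ standard basis vectors of $\R^{f+2k}$, and applying $C$ shows that $\sigma(x)$ is a nonnegative combination of the first $f$ columns $C_{\star,1},\dots,C_{\star,f}$ of $C$.

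It then follows that $T := \{C_{\star,1},\dots,C_{\star,f}\} \subseteq \RNonNeg^m$ (the columns are nonnegative because $C$ is) is a slack generating set of $P$ with respect to $Ax \le b$ of size $f$: every point of $\tilde P = \sigma(P)$ is of the form $\sigma(x)$ with $x \in P$ and hence a nonnegative combination of $T$. The step that deserves the most care — and where the hypothesis $\dim(P) \ge 1$ really enters — is the setup of the auxiliary system $\bar A x \le \bar b$: one must be sure an irredundant description with exactly $f$ inequalities plus equations for $\aff{P}$ exists and that Lemma~\ref{lem:dim(P)_at_least_1} is legitimately applied with that system as the defining one; the remaining manipulations are the routine bookkeeping sketched above.
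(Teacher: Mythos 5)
Your proof is correct, and it is a close variant of the paper's: both arguments apply Lemma~\ref{lem:dim(P)_at_least_1} to produce a nonnegative coefficient matrix and then read the slack generating set off from its columns. The difference is in the bookkeeping. The paper works internally: it reorders $Ax \le b$ so that a maximal nonredundant subsystem comes first, expresses every redundant row as a nonnegative combination of that subsystem, forms the vectors $t_k$, and only at the end discards the $t_k$ belonging to inequalities that are tight on all of $P$, asserting that exactly $f$ vectors remain. You instead import an explicitly chosen irredundant auxiliary description $\bar A x \le \bar b$ (with exactly $f$ facet-defining inequalities plus $2k$ affine-hull inequalities), apply Lemma~\ref{lem:dim(P)_at_least_1} to each row of $Ax \le b$ with respect to $\bar A x \le \bar b$ to get a nonnegative matrix $C$ with $\sigma = C \bar\sigma$, and then observe that the last $2k$ coordinates of $\bar\sigma(x)$ vanish for $x \in P$ while the first $f$ are nonnegative, so the first $f$ columns of $C$ generate. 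This makes the count of $f$ immediate, sidestepping the paper's implicit claim that among the nonredundant rows of the given system exactly $r-f$ are equations on $P$. The two points you flag as needing care are both fine: an irredundant description with one inequality per facet together with inequalities fixing $\aff{P}$ is standard, and Lemma~\ref{lem:dim(P)_at_least_1} applies to the system $\bar A x \le \bar b$ since it defines the same polytope $P$ with $\dim(P)\ge 1$.
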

\begin{proof}
Let  $m$ and $r$ denote the total number and the number of nonredundant inequalities of $Ax \leqslant b$ respectively. W.l.o.g., assume that these $r$ nonredundant inequalities come first in the system $Ax \leqslant b$. By Lemma \ref{lem:dim(P)_at_least_1}, each of the $m-r$ last inequalities of $Ax \leqslant b$ is a nonnegative combination of the $r$ first inequalities of $Ax \leqslant b$. This implies that, for each $i \in [m] \setminus [r]$, there are nonnegative coefficients $t_{k,i} \in \RNonNeg$ such that
\begin{equation*}
A_{i,\star} = \sum_{k = 1}^r A_{k,\star} t_{k,i} \quad 
\text{and} \quad 
b_i = \sum_{k=1}^r b_k t_{k,i}\,.
\end{equation*}
By letting $t_{k,i} := 1$ if $i = k$ and $t_{k,i} := 0$ for $i \in [r]$, the above equations hold for all $i \in [m]$. This defines a set $T := \{t_1, \ldots, t_r\}$ of nonnegative vectors $t_k \in \RNonNeg^m$, $k \in [r]$.

To show that $T$ is a slack generating set, consider for any point $x \in P$ the nonnegative combination%
\begin{equation}
\label{eq:nneg_comb_slack}
\sum_{k=1}^r (b_k - \scalProd{A_{k,\star}}{x}) t_k\,.
\end{equation}
For $i \in [m]$, the $i$-th coordinate of this vector equals
\begin{equation*}
\sum_{k=1}^r (b_k - \scalProd{A_{k,\star}}{x}) t_{k,i}
= \sum_{k=1}^r b_k t_{k,i} - \scalProd{\sum_{k=1}^r A_{k,\star} t_{k,i}}{x}
= b_i - \scalProd{A_{i,\star}}{x}, 
\end{equation*}
thus \eqref{eq:nneg_comb_slack} equals the slack vector of $x$. 

Moreover, in \eqref{eq:nneg_comb_slack} the coefficients corresponding to inequalities that hold with equality for all points of $P$ can be chosen to be  zeroes. Thus the corresponding vectors can be removed from $T$, resulting in a slack generating set of size $f$.
\end{proof}

\begin{thm}
	\label{thm:extCmplSlackExt}
	The \mnfx{} of a polytope~$P$ with $\dim(P) \geqslant 1$ is equal to the minimum size of a slack generating set of~$P$ w.r.t.\ any given system $Ax\leqslant b$ defining $P$.
\end{thm}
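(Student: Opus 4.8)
The plan is to establish the two inequalities $\xc{P} \le \min\{|T| : T \text{ slack generating set w.r.t. } Ax\le b\}$ and $\xc{P} \ge \min\{|T| : \dots\}$ separately, and then use Lemma~\ref{lem:slack_gen} to relate the minimum over slack generating sets to facets.

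\emph{Upper bound.} This direction is essentially already done in the text preceding the theorem. Given any slack generating set $T = \{t_1,\dots,t_r\}$ of $P$ w.r.t.\ $Ax\le b$, the system $\sum_{k=1}^r \lambda_k t_k \in \aff{\tilde{P}}$, $\lambda_k \ge 0$, together with the projection $\lambda \mapsto \sum_k \lambda_k t_k$, is an extended formulation of $\tilde P$ of size $r$; since $\tilde P$ is affinely isomorphic to $P$ (via the slack map $\sigma$), composing with that affine isomorphism yields an extension of $P$ of size $r$. Hence $\xc{P} \le r$, and taking the minimum over all slack generating sets gives $\xc{P} \le \min\{|T|\}$. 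I would just recall this construction in one or two sentences.

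\emph{Lower bound.} Here I would start from an arbitrary extension $Q \subseteq \R^e$ of $P$ with $\xc{P}$ facets, together with an affine map $\pi$ with $\pi(Q) = P$, and produce a slack generating set w.r.t.\ $Ax\le b$ of size $\xc{P}$. Compose $\pi$ with the slack map $\sigma$ to get an affine map $\tau := \sigma\circ\pi : \R^e \to \R^m$ with $\tau(Q) = \tilde P \subseteq \RNonNeg^m$. Write $\tau(y) = Cy + d$. Since $Q$ has $N := \xc{P}$ facets, pick for each facet an inequality; let $z_1,\dots,z_N$ be a system of inequalities $\scalProd{g_\ell}{y}\le h_\ell$ describing $Q$, and consider the slack map of $Q$, call it $\rho: \R^e \to \RNonNeg^N$. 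The point is that $\tau$ should factor through $\rho$: I want to write $\tau(y) = M\rho(y)$ for some nonnegative matrix $M \in \RNonNeg^{m\times N}$, so that the columns of $M$ form the desired slack generating set. To see this, note that each coordinate function $y \mapsto \tau(y)_i = \scalProd{C_{i,\star}}{y} + d_i$ is an affine function that is \emph{nonnegative} on $Q$ (because $\tilde P \subseteq \RNonNeg^m$). If $\dim(Q)\ge 1$, Lemma~\ref{lem:dim(P)_at_least_1} (applied to the polytope $Q$ and the valid inequality $-\scalProd{C_{i,\star}}{y}\le d_i$, i.e.\ $\tau(y)_i \ge 0$) says that $d_i - (-\scalProd{C_{i,\star}}{y})\cdot(-1)$... more precisely, the inequality $\scalProd{-C_{i,\star}}{y} \le d_i$ valid for $Q$ is a nonnegative combination of the $z_\ell$'s, which gives nonnegative coefficients $M_{i,\ell}$ with $\tau(y)_i = \sum_\ell M_{i,\ell}(h_\ell - \scalProd{g_\ell}{y}) = \sum_\ell M_{i,\ell}\rho(y)_\ell$ for all $y$ (the constant terms match because both sides agree on the full-dimensional $Q$, or one argues as in Lemma~\ref{lem:slack_gen}). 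Then for $x \in P$ choose $y \in Q$ with $\pi(y) = x$; the slack vector of $x$ equals $\tau(y) = \sum_\ell \rho(y)_\ell M_{\star,\ell}$, a nonnegative combination of the columns $M_{\star,1},\dots,M_{\star,N}$. Hence $\{M_{\star,1},\dots,M_{\star,N}\}$ is a slack generating set of $P$ w.r.t.\ $Ax\le b$ of size $N = \xc{P}$, proving $\min\{|T|\} \le \xc{P}$.

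\emph{Loose ends and main obstacle.} Combining with Lemma~\ref{lem:slack_gen}, which gives a slack generating set of size $f$ (the number of facets of $P$) and thus shows $\min\{|T|\} \le f$, is not strictly needed for the theorem statement but ties things to the facet count; I would mention it only in passing. The dimension hypothesis $\dim(P)\ge 1$ is what lets us invoke Lemma~\ref{lem:dim(P)_at_least_1}; note we also need $\dim(Q)\ge 1$, which holds since $\pi(Q)=P$ and $\dim(P)\ge 1$. The one genuinely delicate point is the factorization step: arguing that the affine map $\tau$ restricted to (the affine hull of) $Q$ can be written with nonnegative coefficients against the slacks $\rho(y)_\ell$, including matching the constant terms, not just the linear parts. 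The cleanest way is to mimic the bookkeeping in the proof of Lemma~\ref{lem:slack_gen} verbatim: handle redundant inequalities of $Q$ by padding with the appropriate unit vectors, apply Lemma~\ref{lem:dim(P)_at_least_1} coordinatewise to the nonredundant ones, and check that the resulting vector of slacks agrees with $\tau(y)$ in every coordinate by the same two-line computation used there.
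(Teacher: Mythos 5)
Your proposal is correct and reaches the theorem by a route that is recognizably related to, but not identical with, the paper's. Both directions you outline are sound. The upper-bound direction is exactly as in the text preceding the theorem, as you say.

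For the lower-bound direction, the paper's trick is slightly different from yours and worth noting. Rather than applying Lemma~\ref{lem:dim(P)_at_least_1} coordinatewise to the composed affine map $\tau = \sigma\circ\pi$, the paper \emph{augments} the inequality system describing $Q$: it writes the $m$ coordinate functions of the map $\pi\colon Q\to\tilde P$ as $m$ additional valid inequalities $\scalProd{c_i}{y}\leqslant g_i$ for $Q$, prepends them to a system $Cy\leqslant g$ describing $Q$, and then invokes Lemma~\ref{lem:slack_gen} once on $Q$ to obtain a slack generating set $\{t_1,\dots,t_r\}\subseteq\RNonNeg^n$ for $Q$ w.r.t.\ this enlarged system. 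Projecting each $t_k$ to its first $m$ coordinates yields the desired slack generating set for $P$ directly, via the identity $\pi=\rho\circ\tau$. So the paper routes all the Farkas bookkeeping through Lemma~\ref{lem:slack_gen}, which already handles redundant inequalities and non-full-dimensional $Q$, whereas you invoke Lemma~\ref{lem:dim(P)_at_least_1} directly $m$ times and assemble the matrix $M$ by hand. The two are essentially equivalent, and both work; the paper's packaging is cleaner because it avoids the case distinctions you flag at the end.

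About your two ``loose ends'': the first (matching constant terms, not just linear parts) is actually a non-issue, because Lemma~\ref{lem:dim(P)_at_least_1} as stated and proved already gives a nonnegative combination matching both $c$ and the exact right-hand side $\delta$ (this is the whole point of the $0\leqslant 1$ step in its proof), so the identity $\tau(y)_i = \sum_\ell M_{i,\ell}\rho(y)_\ell$ holds for all $y\in\R^e$, not just on $Q$. The second (describing $Q$ by exactly $N=\xc P$ inequalities) does require $Q$ to be full-dimensional, which you should state explicitly; it is justified by the remark in the Introduction that one may restrict attention to full-dimensional extensions. Alternatively, if $Q$ is lower-dimensional, you can include the affine-hull equations as pairs of inequalities and observe that the corresponding coefficients contribute zero to the nonnegative combination at every $y\in Q$ since the slacks there vanish, so the corresponding columns of $M$ can be dropped without affecting the conclusion --- exactly the device used at the end of the proof of Lemma~\ref{lem:slack_gen}.
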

\begin{proof}
 For an extension~$Q \subseteq \R^e$ of~$P$ of size~$r$  we have to show  that there is a slack generating set of the polytope $P$ of size $r$. Since the polytopes $P$ and $\tilde{P}$ are affinely isomorphic, $Q$ is an extension of the polytope~$\tilde{P}$, i.e there exists an affine map $\pi : \R^e \to \R^m$ such that $\pi(Q) = \tilde{P}$. Let $c_i \in \R^e$, $i \in [m]$ and $g_i \in \R$ be such that $\pi(y)_i = g_i - \scalProd{c_i}{y}$. Note that the inequality $\scalProd{c_i}{y} \leqslant g_i$ is valid for $Q$, because  $\pi(Q) \subseteq \tilde{P} \subseteq \RNonNeg^m$. 

Now, consider a system $Cy \leqslant g$ of $n$ linear inequalities describing $Q$ such that the first $m$ inequalities of this system are $\scalProd{c_1}{y} \leqslant g_1$, \ldots, $\scalProd{c_m}{y} \leqslant g_m$, and a corresponding slack map $\tau : \R^e \to \R^n$. Again, $\tilde{Q} := \tau(Q)$ is affinely isomorphic to $Q$. Therefore, $\tilde{Q}$ is also an extension of $\tilde{P}$. Indeed, the projection $\rho : \R^n \to \R^m$ to the first $m$ coordinates maps $\tilde{Q}$ onto $\tilde{P}$, because $\pi = \rho \circ \tau$ (see Figure \ref{fig:diagram}). 

\begin{figure}[ht]
\begin{equation*}
\begin{array}{ccc}
Q &\stackrel{\tau}{\longrightarrow} &\tilde{Q}\\[1ex]
  &\searrow^{\!\!\pi} & \downarrow^{\rho}\\[1ex]
P &\stackrel{\sigma}\longrightarrow & \tilde{P}
\end{array}
\end{equation*}
\caption{The four polytopes $P$, $\tilde{P}$, $Q$ and $\tilde{Q}$ and the maps between them.\label{fig:diagram}}
\end{figure}

By Lemma \ref{lem:slack_gen} there is a slack generating set $\{t_1,\ldots,t_r\} \subseteq \RNonNeg^n$ for $Q$ of size $r$. We claim that stripping the last $n - m$ coordinates of these vectors result in a slack generating set $\{\rho(t_1),\ldots,\rho(t_r)\}$ of the polytope $P$.

Indeed, consider a point $x \in P$ and its slack vector $\sigma(x) \in \tilde{P}$. Because $\tilde{P} \subseteq \pi(Q)$ there is a point $y \in Q$ such that $\pi(y) = \sigma(x)$. Because $\{t_1,\ldots,t_r\}$ is a slack generating set of $Q$, there exist nonnegative coefficients $\lambda_1, \ldots, \lambda_r \in \RNonNeg$ such that $\tau(y) = \sum_{k=1}^r \lambda_k t_k$. From linearity of $\rho$ and $\pi = \rho \circ \tau$, we have
$$
\sigma(x) = \pi(y) = \rho(\tau(y)) = \rho\left(\sum_{k=1}^r \lambda_k t_k\right) = \sum_{k=1}^r \lambda_k \rho(t_k)\,.
$$
 and thus $P$ has a slack generating set $\{\rho(t_1),\ldots,\rho(t_r)\}$ of size $r$.
\end{proof}

Note that in Theorem~\ref{thm:extCmplSlackExt} one may take the minimum over the  slack generating sets w.r.t.\ \emph{any} fixed system of inequalities describing~$P$. In particular, all these minima coincide.  


\subsection{Non-Negative Factorizations}

Again, let $P=\setDef{x\in\R^d}{Ax\leqslant b}$ be a polytope, where $A\in\R^{m\times d}$, $b\in\R^m$. Let $V = \{v_1,\ldots,v_n\}\subseteq \R^d$ denote any finite set such that
\begin{equation*}
	P = \conv(V)\,.
\end{equation*}
The \emph{slack matrix} of~$P$ w.r.t.~$Ax\leqslant b$ and $V$ is the matrix $S=(S_{i,j})\in\RNonNeg^{m\times n}$ with 
\begin{equation*}
	S_{i,j}=b_i-\scalProd{A_{i,\star}}{v_j}\quad\text{for all }i\in\ints{m},\ j\in \ints{n}\,.
\end{equation*}
Note that the slack representation~$\tilde{P}\subseteq\R^m$ of~$P$ (w.r.t. $Ax\leqslant b$) is the convex hull of the columns of~$S$.

If the columns of a nonnegative matrix $T \in \RNonNeg^{m\times r}$ form a slack generating set of~$P$, then there is a nonnegative matrix $U\in\RNonNeg^{r\times n}$ with $S=TU$. Conversely, for every  factorization $S=TU$ of the slack matrix into nonnegative matrices $T\in\RNonNeg^{m\times r}$ and $U\in\RNonNeg^{r\times n}$, the columns of~$T$ form a slack generating set of~$P$. 

Therefore, due to Theorem~\ref{thm:extCmplSlackExt}, constructing an extended formulation of size~$r$ for~$P$ amounts to finding a \emph{rank-$r$} nonnegative factorization of the slack matrix~$S$, that is a factorization $S=TU$ into nonnegative matrices~$T$ with~$r$ columns and~$U$ with~$r$ rows. In particular, the following result follows, which is essentially due to Yannakakis~\cite{Yannakakis91} (he proved that \mnfx{} and nonnegative rank are within a factor of two of each other, when the size of an extension is defined as the sum of the number of variables and number of constraints defining the extension).

\begin{thm}[see Yannakakis~\cite{Yannakakis91}]
	\label{thm:extComplNonnegRk}
The \mnfx{} of a  polytope~$P$ (which is neither empty nor a single point) is equal to the nonnegative rank of any of its slack matrices. In particular, all the slack matrices of~$P$ have the same nonnegative rank.
\end{thm}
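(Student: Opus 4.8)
The plan is to derive the statement from Theorem~\ref{thm:extCmplSlackExt} together with the correspondence between slack generating sets and nonnegative factorizations sketched immediately above. First I would fix, once and for all, an arbitrary slack matrix $S \in \RNonNeg^{m\times n}$ of $P$, say w.r.t.\ a system $Ax \leqslant b$ describing $P$ and a finite set $V = \{v_1,\ldots,v_n\}$ with $P = \conv(V)$, and record the two facts I will need: the columns of $S$ are exactly the slack vectors $\sigma(v_j)$, and consequently $\tilde{P} = \sigma(P)$ is the convex hull $\conv(S_{\star,1},\ldots,S_{\star,n})$ of the columns of $S$. The goal is then $\xc{P} = \nnegrank(S)$; since $\xc{P}$ manifestly does not involve $S$, this simultaneously yields that all slack matrices of $P$ share the same nonnegative rank.

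The inequality $\nnegrank(S) \leqslant \xc{P}$ is the easy direction. By Theorem~\ref{thm:extCmplSlackExt}, which applies since $\dim(P) \geqslant 1$ by hypothesis, $P$ has a slack generating set $T = \{t_1,\ldots,t_r\}$ w.r.t.\ $Ax \leqslant b$ with $r = \xc{P}$. Each column $\sigma(v_j)$ of $S$ lies in $\tilde{P}$ and hence is a nonnegative combination $\sigma(v_j) = \sum_k U_{k,j} t_k$; assembling the coefficients $U_{k,j} \geqslant 0$ into a matrix $U \in \RNonNeg^{r\times n}$ exhibits $S = TU$ with $T$ the $m\times r$ matrix whose columns are the $t_k$, so $\nnegrank(S) \leqslant r = \xc{P}$.

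For the reverse inequality $\xc{P} \leqslant \nnegrank(S)$ I would start from a nonnegative factorization $S = TU$ with $T \in \RNonNeg^{m\times r}$, $U \in \RNonNeg^{r\times n}$ and $r = \nnegrank(S)$, and argue that the columns of $T$ form a slack generating set of $P$: every column of $S$ is a nonnegative combination of the columns of $T$ by the factorization, and every point of $\tilde{P}$ is a convex combination of the columns of $S$ by the recorded fact, hence a nonnegative combination of the columns of $T$. Theorem~\ref{thm:extCmplSlackExt} then gives $\xc{P} \leqslant r = \nnegrank(S)$, and combining the two inequalities finishes the proof.

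There is no serious obstacle here: the whole argument is bookkeeping on top of Theorem~\ref{thm:extCmplSlackExt}. The one place deserving a moment of care is precisely the step ``factorization $\Rightarrow$ slack generating set'', where one must not forget that the columns of $S$, being the slack vectors of the $v_j$, already generate all of $\tilde{P}$; without the identity $\tilde{P} = \conv(S_{\star,1},\ldots,S_{\star,n})$ the columns of $T$ would only be known to dominate those finitely many points. A minor side point is that $T$ may contain zero columns, which only shrinks the resulting slack generating set and hence never violates the bound.
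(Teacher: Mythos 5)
Your proposal is correct and takes essentially the same route as the paper: the paper derives Theorem~\ref{thm:extComplNonnegRk} by combining Theorem~\ref{thm:extCmplSlackExt} with the observation (stated in prose just before the theorem) that the columns of the first factor $T$ in a nonnegative factorization $S = TU$ are precisely a slack generating set of $P$. You have simply unpacked that two-way correspondence into the two inequalities, including the one point worth being explicit about, namely that $\tilde{P} = \conv(S_{\star,1},\ldots,S_{\star,n})$ so that dominating the columns of $S$ suffices to dominate all of $\tilde{P}$.
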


\begin{rem}
  It is obvious from the definition that the extension complexity is monotone on extensions: if polytope $Q$ is an extension of polytope $P$, then $\xc{Q} \geqslant \xc{P}$. From Theorem \ref{thm:extComplNonnegRk}, we see immediately that the extension complexity is also monotone on faces: if polytope $P$ has $F$ as a face, then $\xc{P} \geqslant \xc{F}$ because we can obtain a slack matrix of $F$ from a slack matrix of $P$ by deleting all columns that correspond to points of $P$ which are not in $F$.
\end{rem}

A nonnegative matrix~$T$ is the first factor in a nonnegative factorization of a slack matrix of~$P$ if and only if the columns of~$T$ form a slack generating set of $P$. In order to characterize the second factors of such nonnegative factorizations, let us consider an extension~$Q$ of~$P$ with projection~$\pi$ and some set~$V = \{v_1,\ldots,v_n\}$ with $P=\conv(V)$. A \emph{section} is a map $s : V\rightarrow Q$ such that $\pi(s(x))=x$ holds for all $x\in V$. Clearly, every extension posseses a section map. 

Recall that $(\R^n)^*$ denotes the dual vector space of $\R^n$, which we here regard as the set of all row vectors of size $n$. For an inequality $\scalProd{c}{y} \leqslant g$ that is valid for~$Q$, we construct a nonnegative row vector in $(\R^n)^*_+$ (the nonnegative orthant in $(\R^n)^*$), whose $j$-th coordinate equals $g-\scalProd{c}{s(v_j)}$ and call it the \emph{section slack covector} associated with $\scalProd{c}{y} \leqslant g$ w.r.t.\ set $V$. A set $U$ of nonnegative row vectors in $(\R^n)^*_+$ is a \emph{complete set of section slack covectors} if there is some extended formulation for~$P$ along with some section such that~$U$ is precisely the set of section slack covectors associated with the inequalities in the extended formulation. A nonnegative matrix~$U$ is the second factor in a nonnegative factorization of a slack matrix of~$P$ if and only if the rows of~$U$ form a complete set of section slack covectors (both w.r.t. to the same set~$V$ with $P=\conv(V)$). From this, one in particular derives (again using Farkas's Lemma) the following characterization, where the \emph{slack covector} associated with some valid inequality $\scalProd{a}{x} \leqslant b$ for~$P=\conv(V)$ is the nonnegative row vector in $(\R^n)^*_+$ whose $j$-th coordinate is $b-\scalProd{a}{v_j}$.

\begin{prop}
	A set $U\subseteq(\R^n)^*_+$ is a complete set of section slack covectors for $P=\conv(V)$ (with $1\le|V|<\infty$) if and only if every slack covector associated with a valid inequality for~$P$ can be expressed as a nonnegative combination of the elements of $U$.
\end{prop}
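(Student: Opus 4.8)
The plan is to reduce the statement to the matrix-level characterization recorded just above it (a nonnegative matrix is the second factor of a nonnegative factorization of a slack matrix of~$P$ with respect to~$V$ if and only if its rows form a complete set of section slack covectors with respect to~$V$), combined with Lemma~\ref{lem:dim(P)_at_least_1}; accordingly I work under the hypothesis $\dim(P)\geqslant 1$ (the case $|V|=1$ being degenerate). Write $U=\{u_1,\dots,u_r\}$ and let $\hat U$ be the $r\times n$ matrix whose $k$-th row is~$u_k$. The one ingredient that is not pure bookkeeping is the following: for any system $Ax\leqslant b$ describing~$P$, with associated slack matrix~$S$ w.r.t.\ $Ax\leqslant b$ and~$V$, the slack covector of every valid inequality for~$P$ is a nonnegative combination of the rows of~$S$. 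Indeed, if $\scalProd{a}{x}\leqslant\beta$ is valid for~$P$, then by Lemma~\ref{lem:dim(P)_at_least_1} there are coefficients $\mu_i\geqslant 0$ with $a=\sum_i\mu_i A_{i,\star}$ and $\beta=\sum_i\mu_i b_i$, whence the $j$-th entry of the associated slack covector satisfies
\begin{equation*}
\beta-\scalProd{a}{v_j}=\sum_i\mu_i b_i-\scalProd{\sum_i\mu_i A_{i,\star}}{v_j}=\sum_i\mu_i\bigl(b_i-\scalProd{A_{i,\star}}{v_j}\bigr)=\sum_i\mu_i S_{i,j},
\end{equation*}
so the slack covector equals $\sum_i\mu_i S_{i,\star}$.

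For necessity, assume $U$ is a complete set of section slack covectors. By the matrix characterization, $\hat U$ is the second factor of a nonnegative factorization $S=T\hat U$, with $T\geqslant 0$, of some slack matrix~$S$ of~$P$ w.r.t.\ $V$ and some system $Ax\leqslant b$ describing~$P$. Given any valid inequality for~$P$, the ingredient above expresses its slack covector as $\sum_i\mu_i S_{i,\star}$ with $\mu_i\geqslant 0$, and each $S_{i,\star}$ is a nonnegative combination of the rows of~$\hat U$ since $T\geqslant 0$; substituting the second set of combinations into the first exhibits the slack covector as a nonnegative combination of the elements of~$U$.

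For sufficiency, assume every slack covector of a valid inequality for~$P$ is a nonnegative combination of the elements of~$U$. Fix any system $Ax\leqslant b$ describing~$P$ and let~$S$ be its slack matrix w.r.t.\ $V$. Each row $S_{i,\star}$ is the slack covector of the valid inequality $\scalProd{A_{i,\star}}{x}\leqslant b_i$, hence by assumption a nonnegative combination of the rows of~$\hat U$; collecting the coefficients into a nonnegative matrix~$T$ yields $S=T\hat U$. Thus $\hat U$ is the second factor of a nonnegative factorization of a slack matrix of~$P$ w.r.t.\ $V$, so by the matrix characterization its rows, i.e.\ the elements of~$U$, form a complete set of section slack covectors.

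Granting the matrix characterization, there is no genuine obstacle; the only things to watch are that the set~$V$ remains fixed throughout (both notions in play are relative to~$V$) and that Lemma~\ref{lem:dim(P)_at_least_1} is applicable, which is why one assumes $\dim(P)\geqslant 1$. If one prefers not to invoke the matrix characterization, the sufficiency direction can be made constructive: from $S=T\hat U$ one reads off the slack generating set consisting of the columns $t_1,\dots,t_r$ of~$T$, forms the extension $Q=\setDef{\lambda\in\RNonNeg^r}{\sum_k\lambda_k t_k\in\aff{\tilde P}}$ with projection $\lambda\mapsto\sum_k\lambda_k t_k$, takes the section $s(v_j)$ equal to the $j$-th column of~$\hat U$, and checks that the section slack covectors of the inequalities $\lambda_k\geqslant 0$ are exactly the rows of~$\hat U$.
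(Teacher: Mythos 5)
Your proof is correct and follows precisely the route the paper itself indicates: the paper does not supply a detailed argument for this Proposition, remarking only that it ``derives (again using Farkas's Lemma)'' from the preceding matrix-level observation that a nonnegative matrix is the second factor of a nonnegative factorization of a slack matrix if and only if its rows form a complete set of section slack covectors. You correctly identify and use exactly these two ingredients (the matrix characterization plus Lemma~\ref{lem:dim(P)_at_least_1}), spelling out both directions in the natural way, and your flagging of the $\dim(P)\geqslant 1$ hypothesis is consistent with the paper's standing assumptions elsewhere.
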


\subsection{Rectangle Coverings}

According to Theorem~\ref{thm:extComplNonnegRk}, finding lower bounds on the \mnfx{} of a polytope amounts to finding lower bounds on the nonnegative rank of its slack matrices. Not surprisingly, determining the nonnegative rank  of a matrix is a hard problem from the algorithmic point of view. Indeed, it is NP-hard to decide whether the nonnegative rank of a matrix equals its usual rank~\cite{Vavasis09} (where, of course, the first is never  smaller than the second). 

One way to bound~ the nonnegative rank $\nnegrank(M)$ from below is to observe that a nonnegative factorization $M=TU$, where~$T$ has~$r$ columns and~ $U$ has~$r$ rows, yields a representation 
\begin{equation*}
	M=\sum_{k=1}^r T_{\star,k}U_{k,\star}
\end{equation*}
of~$M$ as a sum of~$r$ nonnegative rank-$1$ matrices $T_{\star,k}U_{k,\star}$. Denoting by $\supp(\cdot)$ the \emph{support} of a vector or of a matrix (i.e., the subset of indices where the argument has a nonzero entry), we obviously have 

\begin{equation*}
	\supp(M)=\bigcup_{k=1}^r\supp(T_{\star,k}U_{k,\star})
\end{equation*}
since both~$T$ and~$U$ are nonnegative.

A \emph{rectangle} is a set of the form $I\times J$, where~$I$ and~$J$ are subsets of the row respectively column indices of~$M$. A \emph{rectangle covering} of~$M$ is a set of rectangles whose union equals~\emph{$\supp(M)$}. It is important to notice that all the rectangles in any rectangle covering of~$M$ are contained in the support of $M$. The \emph{rectangle covering number} of~$M$ is the smallest cardinality~$\rc(M)$ of any rectangle covering of~$M$. Clearly, we have 

\begin{equation}\label{eq:rcnnegrank}
	\rc(M) \leqslant\nnegrank(M)
\end{equation}
for all nonnegative matrices~$M$ since
\begin{equation*}
	\supp(T_{\star,k}U_{k,\star})=\supp(T_{\star,k})\times \supp(U_{k,\star})
\end{equation*}
holds for each $k\in\ints{r}$ . In particular, when~$M = S$ is some slack matrix of the polytope~$P$ (neither empty nor a single point), then by Theorem~\ref{thm:extComplNonnegRk} we have
\begin{equation*}
	\rc(S)\leqslant\nnegrank(S)=\xc{P}\,.
\end{equation*}

Like~$\nnegrank(S)=\xc{P}$, the rectangle covering number~$\rc(S)$ is actually independent of the actual choice of the slack matrix~$S$ of~$P$, see Lemma~\ref{lem:rc_independent_of_S} below. 

Let us call the \emph{support matrix} $\suppmat(S)$ of~$S$ the 0/1-matrix arising from~$S$ by replacing all nonzero-entries by ones. Clearly, we have $\rc(S)=\rc(\suppmat(S))$. The rectangles that can be part of a rectangle covering of~$S$ are called \emph{$1$-rectangles} because any such rectangle induces a submatrix of $\suppmat(S)$ that contains only one-entries.

Furthermore, any 0/1-matrix whose rows are indexed by some set $F_1$, \ldots, $F_m$ of faces of~$P$ including all facets and whose columns are indexed by some set $G_1$, \ldots, $G_n$ of nonempty faces of~$P$ including all vertices such that there is a one-entry at position $(F_i,G_j)$ if and only if face~$F_i$ does \emph{not} contain face~$G_j$ is called a \emph{non-incidence matrix} for~$P$. Associating with every inequality in $Ax\leqslant b$ the face of~$P$ it defines and with every point in~$V$ the smallest face of~$P$ it is contained in, one finds that the set of support matrices of slack matrices of~$P$ equals the set of non-incidence matrices of~$P$ (up to  adding/removing repeated rows or columns).

Clearly, adding a row or a column to a nonnegative matrix does neither decrease the rectangle covering number nor the nonnegative rank. The following result on the rectangle covering number is easy to see.

\begin{lem}\label{lem:addrowcol}
	The rectangle covering number remains unchanged if one adds a row or a column whose support is the union of the supports of some existing rows or columns, respectively.
\end{lem}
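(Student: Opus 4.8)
The plan is to show both directions: adding such a row (or column) cannot increase the rectangle covering number, and of course it cannot decrease it either by the remark preceding the statement. By symmetry (transposing the matrix), it suffices to treat the case of adding a row. So let $M$ be a nonnegative matrix with rows indexed by a set $R$, and let $M'$ be obtained from $M$ by appending a new row $M'_{r_0,\star}$ whose support is the union of the supports of some subset $R_0 \subseteq R$ of the existing rows, i.e.\ $\supp(M'_{r_0,\star}) = \bigcup_{i \in R_0} \supp(M_{i,\star})$.

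Since $M$ is a submatrix of $M'$, we immediately get $\rc(M) \le \rc(M')$. For the reverse inequality, take any rectangle covering $\mathcal{R} = \{I_1 \times J_1, \ldots, I_t \times J_t\}$ of $M$ with $t = \rc(M)$. The key observation is that each column index $j \in \supp(M'_{r_0,\star})$ satisfies $j \in \supp(M_{i,\star})$ for some $i \in R_0$, hence $(i,j) \in \supp(M)$, so $j$ is covered: there is a rectangle $I_k \times J_k \in \mathcal{R}$ with $i \in I_k$ and $j \in J_k$. Now define a new collection of rectangles $\mathcal{R}' = \{I'_1 \times J_1, \ldots, I'_t \times J_t\}$ where $I'_k := I_k \cup \{r_0\}$ whenever $I_k \cap R_0 \neq \varnothing$, and $I'_k := I_k$ otherwise. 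I claim $\mathcal{R}'$ is a rectangle covering of $M'$.

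First, each $I'_k \times J_k$ is a $1$-rectangle of $M'$: the entries indexed by $I_k \times J_k$ are already nonzero, and for the possibly-added row index $r_0$ we need $M'_{r_0,j} \neq 0$ for all $j \in J_k$; but if $I_k \cap R_0 \neq \varnothing$, pick $i \in I_k \cap R_0$, and since $I_k \times J_k \subseteq \supp(M)$ we have $j \in \supp(M_{i,\star}) \subseteq \supp(M'_{r_0,\star})$ for every $j \in J_k$, as desired. Second, $\mathcal{R}'$ covers $\supp(M')$: every position in $\supp(M)$ is still covered since $I_k \subseteq I'_k$; and every position $(r_0, j) \in \supp(M')$ is covered because, as noted above, some rectangle $I_k \times J_k$ contains $(i,j)$ with $i \in R_0 \cap I_k$ and $j \in J_k$, so $I'_k = I_k \cup \{r_0\}$ and thus $(r_0,j) \in I'_k \times J_k$. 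Hence $\rc(M') \le t = \rc(M)$, and combined with the trivial inequality we get $\rc(M') = \rc(M)$.

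The argument is entirely routine; there is no real obstacle. The only point requiring a moment's care is verifying that enlarging $I_k$ to $I_k \cup \{r_0\}$ keeps the rectangle inside $\supp(M')$, which is exactly where the hypothesis on the support of the new row is used. (For the column version one argues identically after transposing, using that $\rc$ is transpose-invariant since it depends only on the support pattern $\suppmat$.)
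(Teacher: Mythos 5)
Your proof is correct. The paper states this lemma without proof, declaring it ``easy to see,'' and your argument---augmenting each $1$-rectangle $I_k\times J_k$ with the new row index $r_0$ precisely when $I_k\cap R_0\neq\varnothing$, then verifying both that the enlarged rectangle stays inside $\supp(M')$ and that every new nonzero $(r_0,j)$ is thereby covered---is exactly the natural one the authors had in mind.
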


Now, we can prove that all non-incidence matrices, and thus all slack matrices, of a polytope (of dimension at least $1$) have the same rectangle covering number.

\begin{lem}
\label{lem:rc_independent_of_S}
All non-incidence matrices of a polytope (which is neither empty nor a single point) have the same rectangle covering number.
\end{lem}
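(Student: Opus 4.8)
The plan is to reduce the statement to the structural description given just before it, namely that the support matrices of slack matrices of $P$ are exactly the non-incidence matrices of $P$, up to repetition of rows or columns. Since $\rc(S) = \rc(\suppmat(S))$, it suffices to prove that any two non-incidence matrices of $P$ have the same rectangle covering number. So let $N_1$ and $N_2$ be two non-incidence matrices of $P$. I want to show $\rc(N_1) = \rc(N_2)$, and by symmetry it is enough to prove $\rc(N_1) \le \rc(N_2)$.

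First I would record the following monotonicity fact, already available via Lemma~\ref{lem:addrowcol}: if $N$ is a non-incidence matrix of $P$ and $N'$ is obtained from $N$ by adjoining, for any face $F$ of $P$, a new row whose one-entries are exactly the columns indexed by faces not contained in $F$, then $\rc(N') = \rc(N)$; and similarly for columns indexed by nonempty faces. The key point is that the row of a non-incidence matrix corresponding to a face $F$ has support equal to $\{G : G \not\subseteq F\}$, and if $F = F_1 \cap \dots \cap F_k$ is an intersection of faces already present as rows, then (since $G \subseteq F$ iff $G \subseteq F_i$ for all $i$) that support is the \emph{union} of the supports of the rows for $F_1,\dots,F_k$ — so Lemma~\ref{lem:addrowcol} applies. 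The same holds on the column side, where a nonempty face $G$ is the intersection of the facets containing it (equivalently, the join of its vertices in the face lattice), so the column for $G$ has support equal to the union of the supports of the columns for its vertices. Hence one may freely add rows for arbitrary faces (provided all facets are present) and columns for arbitrary nonempty faces (provided all vertices are present), without changing $\rc$.

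With this in hand, the argument is a standard "common refinement" step. Starting from $N_1$, adjoin a row for every face of $P$ and a column for every nonempty face of $P$; call the result $N_1^+$. By the previous paragraph $\rc(N_1^+) = \rc(N_1)$. Do the same to $N_2$ to get $N_2^+$ with $\rc(N_2^+) = \rc(N_2)$. Now $N_1^+$ and $N_2^+$ have identical row index sets (all faces of $P$) and identical column index sets (all nonempty faces of $P$), and by definition of a non-incidence matrix the entry at $(F,G)$ is $1$ iff $G \not\subseteq F$ in both matrices; so $N_1^+ = N_2^+$. Therefore $\rc(N_1) = \rc(N_1^+) = \rc(N_2^+) = \rc(N_2)$, as desired.

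I do not expect a serious obstacle here; the one place that needs a little care is the bookkeeping around repeated rows and columns (a non-incidence matrix may a priori have two rows for the same face, or a row for a face that is not a facet), but these only ever duplicate an existing support, so deleting or adding them leaves $\rc$ unchanged — again by Lemma~\ref{lem:addrowcol} together with the trivial observation that removing a repeated row or column does not change the support structure. The only genuine input beyond Lemma~\ref{lem:addrowcol} is the face-lattice fact that every face is the intersection of the facets containing it and, dually, the smallest face containing a given set of vertices is their join; both are elementary properties of polytopes.
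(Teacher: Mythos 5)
Your proof is correct and uses essentially the same approach as the paper: both rely on the observation that the row for a face $F$ has support equal to the union of the supports of the rows for the facets containing $F$ (and dually for columns and vertices), and both conclude via Lemma~\ref{lem:addrowcol}. The only cosmetic difference is the direction of the normalization — you enlarge each non-incidence matrix to the canonical ``all faces versus all nonempty faces'' matrix, whereas the paper shrinks each one down to the facet-versus-vertex submatrix; the underlying idea is identical.
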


\begin{proof}
Letting~$P$ denote the polytope in the statement of the lemma, consider a non-incidence matrix $M$ for~$P$. Every proper face~$F$ of~$P$ contains exactly those faces of~$P$ that are contained in all facets of~$P$ that contain~$F$. For the non-incidence matrix~$M$, this implies that the support of the row corresponding to~$F$ is exactly the union of the supports of the rows corresponding to the facets containing~$F$.

Similarly, every nonempty face~$G$ of~$P$ is contained in exactly those faces of~$P$ in which all vertices of~$G$ are contained. This implies that the support of the column corresponding to~$G$ is exactly the union of the supports of the rows corresponding to the vertices contained in~$F$.

The result then follows from Lemma~\ref{lem:addrowcol} because the rectangle covering number of~$M$ equals the rectangle covering of the submatrix of~$M$ corresponding to the facets and vertices of~$P$, and thus does not depend on~$M$. (Notice that there might be a row in~$M$ for~$F = P$, but this row is identically zero.)
\end{proof}

Denoting by~$\rc(P)$ the rectangle covering number of any non-incidence matrix for~$P$, we have
\begin{equation}\label{eq:rcxc}
	\rc(P)\leqslant\xc{P}\,.
\end{equation}
When studying the rectangle covering number, we may freely choose the non-incidence matrix we consider. The most natural choice is the \emph{facet vs.\ vertex}
non-incidence matrix, since it appears as a submatrix of every non-incidence matrix. (The rows of this 0/1-matrix are indexed by the facets, and the columns are
indexed by the vertices. The entry corresponding to a facet-vertex pair is $1$ if and only if the facet does not contain the vertex.) Nevertheless, we will
sometimes consider non-incidence matrices with more rows.

\subsection{Rectangle Coverings, Boolean Factorizations and Lattice Embeddings}

As mentioned before, a rank-$r$ nonnegative factorization of a nonnegative matrix $M$ can be regarded as a decomposition of $M$ into a sum of $r$
nonnegative rank-$1$ matrices.  Similarly, we can regard a rectangle covering of a Boolean (or 0/1-) matrix $M$ with $r$ rectangles as a \emph{rank-$r$ Boolean
  factorization}, that is a factorization $M = TU$ expressing $M$ as the Boolean product of two Boolean matrices $T$ with $r$ columns and $U$ with $r$ rows.
Furthermore, a rank-$r$ Boolean factorization of a Boolean matrix $M$ is equivalent to an embedding of the relation defined by $M$ into the Boolean lattice
$2^{[r]}$ (that is, the set of all subsets of $[r]$, partially ordered by inclusion), in the sense of the following lemma.

\begin{lem}\label{lem:Boolean_factorization_iff_embedding}
  Let $M$ be a $m \times n$ Boolean matrix. Then $M$ admits a rank-$r$ Boolean factorization $M = TU$ if and only if there are functions $f\colon [m] \to 2^{[r]}$
  and $g\colon [n] \to 2^{[r]}$ such that $M_{ij} = 0$ if and only if $g(j) \subseteq f(i)$ for all $(i,j) \in [m] \times [n]$.
\end{lem}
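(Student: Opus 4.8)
The plan is to unpack the definition of the Boolean matrix product and match it with the subset/containment condition. Recall that for Boolean matrices $T \in \{0,1\}^{m\times r}$ and $U \in \{0,1\}^{r \times n}$, the Boolean product $M = TU$ is defined entrywise by $M_{ij} = \bigvee_{k=1}^r (T_{ik} \wedge U_{kj})$; equivalently $M_{ij} = 1$ if and only if there exists $k \in [r]$ with $T_{ik} = U_{kj} = 1$. So $M_{ij} = 0$ if and only if for every $k \in [r]$, not both $T_{ik} = 1$ and $U_{kj} = 1$.

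For the forward direction, given a factorization $M = TU$, I would set $f(i) := \{k \in [r] : T_{ik} = 1\}$, the support of the $i$-th row of $T$, and $g(j) := \{k \in [r] : U_{kj} = 1\}$, the support of the $j$-th column of $U$. Then, reading off the observation above, $M_{ij} = 0$ holds exactly when there is no $k$ belonging to both $g(j)$ and $f(i)$, i.e.\ when $g(j) \cap f(i) = \varnothing$. That is \emph{not} the same as $g(j) \subseteq f(i)$, so here I would use the standard trick: replace $f$ by its complement, i.e.\ define instead $f(i) := \{k \in [r] : T_{ik} = 0\}$ (keeping $g(j)$ the support of column $j$ of $U$). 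Then $g(j) \cap (\text{complement of } f(i)) = \varnothing$ becomes $g(j) \subseteq f(i)$, and this is equivalent to $M_{ij} = 0$, as required.

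For the converse, given $f\colon [m] \to 2^{[r]}$ and $g\colon [n] \to 2^{[r]}$ with $M_{ij} = 0 \iff g(j) \subseteq f(i)$, I would define $T \in \{0,1\}^{m \times r}$ by $T_{ik} := 1$ iff $k \notin f(i)$, and $U \in \{0,1\}^{r \times n}$ by $U_{kj} := 1$ iff $k \in g(j)$. Then $(TU)_{ij} = 1$ iff there is some $k$ with $k \notin f(i)$ and $k \in g(j)$, i.e.\ iff $g(j) \not\subseteq f(i)$, i.e.\ iff $M_{ij} = 1$. Hence $M = TU$ is a rank-$r$ Boolean factorization.

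There is no real obstacle here; the only subtlety, and the one place to be careful, is the complementation: the naive row-support assignment gives the disjointness condition $g(j)\cap f(i) = \varnothing$ rather than the containment condition stated in the lemma, so one must complement exactly one of the two maps (the paper's convention complements $f$). Once that bookkeeping is fixed the two directions are immediate, and it is worth noting in passing that the rectangles of the covering associated to $M = TU$ are $R_k = \{i : T_{ik} = 1\} \times \{j : U_{kj} = 1\} = \{i : k \notin f(i)\} \times \{j : k \in g(j)\}$ for $k \in [r]$, which ties this statement back to the rectangle-covering picture of the previous subsection.
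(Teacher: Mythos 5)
Your proof is correct and takes essentially the same approach as the paper: the paper's one-line proof likewise interprets the $i$th row of $T$ as the incidence vector of the \emph{complement} of $f(i)$ and the $j$th column of $U$ as the incidence vector of $g(j)$. You simply spell out the same complementation trick in more detail.
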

\begin{proof}
  This follows immediately by interpreting the $i$th row of the left factor $T$ as the incidence vector of the \emph{complement} of the set $f(i)$ and the $j$th
  column of the right factor $U$ as the incidence vector of the set $g(j)$.
\end{proof}

From Lemma~\ref{lem:Boolean_factorization_iff_embedding}, we can easily conclude a first lattice-combinatorial characterization of the rectangle covering number
$\rc(P)$ of a polytope $P$ by taking $M$ to be a non-incidence matrix of $P$.

\begin{thm}\label{thm:lattice-embedding-Boolean}
  Let $P$ be a polytope with $\dim(P) \geqslant 1$. Then $\rc(P)$ is the smallest $r \geqslant 1$ such that $\mathcal{L}(P)$ embeds into $2^{[r]}$.
\end{thm}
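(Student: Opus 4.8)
The plan is to combine Lemma~\ref{lem:Boolean_factorization_iff_embedding} with Lemma~\ref{lem:rc_independent_of_S}, so the only real work is to translate the Boolean-factorization condition into a genuine \emph{lattice} embedding of $\facLat{P}$. First I would fix the non-incidence matrix $M$ of $P$ whose rows are indexed by \emph{all} faces $F_1,\dots,F_m$ of $P$ (including $\varnothing$ and $P$) and whose columns are indexed by \emph{all} faces $G_1,\dots,G_n$ of $P$ (again including $\varnothing$ and $P$), with a one-entry at position $(F_i,G_j)$ iff $F_i \not\supseteq G_j$. By the discussion preceding Lemma~\ref{lem:rc_independent_of_S} this is a legitimate non-incidence matrix, so $\rc(M) = \rc(P)$. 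The advantage of taking \emph{all} faces on both axes is that the row-index set and the column-index set are each in canonical bijection with $\facLat{P}$, so a Boolean factorization of $M$ will directly yield a map on $\facLat{P}$.

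Next I would invoke Lemma~\ref{lem:Boolean_factorization_iff_embedding}: $M$ admits a rank-$r$ Boolean factorization iff there are $f\colon \facLat{P}\to 2^{[r]}$ (on the rows) and $g\colon \facLat{P}\to 2^{[r]}$ (on the columns) with $M_{F,G}=0 \iff g(G)\subseteq f(F)$, i.e.\ $F\supseteq G \iff g(G)\subseteq f(F)$ for all faces $F,G$. The key step is to show that such $f,g$ exist (for a given $r$) if and only if a single map $h\colon \facLat{P}\to 2^{[r]}$ is a lattice embedding. For the ``only if'' direction, set $h:=g$ and observe: taking $F=G$ gives $g(G)\subseteq f(G)$, and more generally $F\supseteq G \iff g(G)\subseteq f(F)$; I would argue that $g(G)\subseteq f(F) \iff g(G)\subseteq g(F)$ by using that $F$ is the intersection of all faces containing it (equivalently, running the biconditional with $F$ playing the role of $G$: $F\supseteq G$ iff $g(G)\subseteq f(F)$, and also $F\supseteq F$ so $g(F)\subseteq f(F)$; one then checks $g(G)\subseteq f(F)$ forces $g(G)\subseteq g(F)$ because any face containing $G$ whose $g$-value sits below $f(F)$ must contain $F$). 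Hence $F\supseteq G \iff g(G)\subseteq g(F)$, which says precisely that $g$ is an order embedding of $\facLat{P}$ into $2^{[r]}$. For the ``if'' direction, given a lattice (order) embedding $h$, simply put $f:=g:=h$; then $F\supseteq G \iff h(G)\subseteq h(F)$ gives $M_{F,G}=0 \iff g(G)\subseteq f(F)$, so Lemma~\ref{lem:Boolean_factorization_iff_embedding} produces a rank-$r$ Boolean factorization of $M$.

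Putting these together: $\rc(P) = \rc(M)$ equals the smallest $r$ for which $M$ has a rank-$r$ Boolean factorization, which equals the smallest $r$ for which $\facLat{P}$ order-embeds into $2^{[r]}$; and since $2^{[r]}$ is a lattice and $\facLat{P}$ is a finite lattice, an order embedding of $\facLat{P}$ into $2^{[r]}$ is the same thing as a lattice embedding in the sense used in the paper (embeddings are only required to respect the order, cf.\ the definition preceding Proposition~\ref{prop:embeddingFacLat}). The main obstacle I anticipate is the bookkeeping in the ``only if'' step — cleanly deriving $g(G)\subseteq f(F) \iff g(G)\subseteq g(F)$ from the biconditional $F\supseteq G\iff g(G)\subseteq f(F)$, i.e.\ showing that $f$ may be replaced by $g$ without loss. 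The slick way to handle this is to note that $g(G)\subseteq f(F)$ holds iff $g(G)\subseteq f(F')$ for the smallest face $F'\supseteq$ nothing\dots concretely: $g(G)\subseteq f(F)$ iff $g$ of every face below $F$ is below $f(F)$, and the join (in $\facLat{P}$) of all faces $G$ with $g(G)\subseteq f(F)$ is itself such a face and equals $F$ — using that $\facLat{P}$ is a lattice and $g$ is monotone (which follows from the biconditional applied with the roles swapped). Once that identification is made, everything else is immediate from the two cited lemmas.
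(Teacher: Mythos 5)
The overall strategy (via Lemma~\ref{lem:Boolean_factorization_iff_embedding} and Lemma~\ref{lem:rc_independent_of_S}) matches the paper's, but the key translation step contains a real gap.

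Your claim that $g$ itself is an order embedding of $\facLat{P}$ into $2^{[r]}$ does not follow from the biconditional $F\supseteq G \iff g(G)\subseteq f(F)$. The biconditional constrains how $g(G)$ relates to the values of $f$, not how $g(G)$ relates to $g(F)$. Concretely: if $G\subseteq F$ you get $g(G)\subseteq f(F)$ and also $g(F)\subseteq f(F)$, but these two inclusions do not force $g(G)\subseteq g(F)$, since $f(F)$ can be strictly larger than both. Your attempted justification (``any face containing $G$ whose $g$-value sits below $f(F)$ must contain $F$'') is backwards: a face $G'\supseteq G$ with $g(G')\subseteq f(F)$ satisfies $G'\subseteq F$, not $G'\supseteq F$. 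And the claim that monotonicity of $g$ ``follows from the biconditional applied with the roles swapped'' fails for the same reason; swapping roles gives $g(F)\subseteq f(G)$, again mixing $g$ with $f$. An explicit failure (with $r$ not minimal, but illustrating that the logic does not hold by itself): for a segment with faces $\varnothing,v_1,v_2,P$, taking $g(\varnothing)=\varnothing$, $g(v_1)=\{1\}$, $g(v_2)=\{2\}$, $g(P)=\{3\}$ and $f(\varnothing)=\varnothing$, $f(v_1)=\{1\}$, $f(v_2)=\{2\}$, $f(P)=\{1,2,3\}$ satisfies the biconditional yet $g$ is not monotone.

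There are two clean ways to repair this, and you must pick one. The paper's way: do not use $g$; instead define $h(F):=\bigcap_{i:\,F\subseteq F_i} f(i)$ (intersecting $f$ over the facets containing $F$), which is monotone by construction, and then check it is order-reflecting using $g$. An alternative way that is closer in spirit to your plan: first replace each rectangle $I_k\times J_k$ in an optimal cover by the inclusion-maximal rectangle containing it, which makes $I_k$ downward-closed and $J_k$ upward-closed in $\facLat{P}$ without increasing the number of rectangles; after this normalization $g(G)=\{k:G\in J_k\}$ \emph{is} monotone, and order-reflection then follows from $g(G)\subseteq g(F)\subseteq f(F)\Rightarrow F\supseteq G$. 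But as written, your argument skips this normalization and the conclusion does not follow.

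A minor point: you take columns indexed by all faces including $\varnothing$, whereas the paper's definition of a non-incidence matrix restricts to nonempty faces. The extra all-zero column is harmless for $\rc$, but it means Lemma~\ref{lem:rc_independent_of_S} does not directly apply to your $M$; you should either drop $\varnothing$ from the column set or note explicitly that adding a zero column does not change $\rc$.
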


For a given poset, the smallest number~$r$ such that~$P$ embeds into the Boolean poset~$2^{[r]}$ is known as the 2-dimension of the poset (see e.g.,
\cite{HabibNourineRaynaudThierry04} and the references therein).  We note that, independently, Gouveia et al.~\cite{GouveiaParriloThomas12} have found a similar
connection.

\begin{proof}[Proof of the Theorem~\ref{thm:lattice-embedding-Boolean}]
  Let $F_1$, \ldots, $F_m$ denote the facets of $P$, let $v_1$, \ldots, $v_n$ denote the vertices of $P$ and let $M$ denote the facet vs.\ vertex non-incidence
  matrix for $P$.

  Suppose first that $\rc(P) \leqslant r$, that is, $M$ has a rank-$r$ Boolean factorization. From maps $f$ and $g$ as in
  Lemma~\ref{lem:Boolean_factorization_iff_embedding}, we define a map $h$ from $\mathcal{L}(P)$ to $2^{[r]}$ by letting $h(F) := \bigcap_{i : F \subseteq F_i}
  f(i)$. It is clear that $F \subseteq G$ implies $h(F) \subseteq h(G)$. Now assume $h(F) \subseteq h(G)$. Pick a vertex $v_j$ of~$F$. We have $M_{ij} = 0$ and
  thus $g(j) \subseteq f(i)$ for all facets $F_i$ containing $F$. Hence, $g(j) \subseteq h(F)$. Because $h(F) \subseteq h(G)$, we have $g(j) \subseteq f(i)$ and
  thus $M_{ij} = 0$ for all facets $F_i$ containing~ $G$. Since $v_j$ is arbitrary, this means that every facet containing $G$ contains all vertices of $F$,
  which implies $F \subseteq G$. Therefore, $h$ is an embedding.

  Next, if $h$ is an embedding of $\mathcal{L}(P)$ into $2^{[r]}$, we can simply define $f(i) := h(F_i)$ and $g(j) := h(\{v_j\})$. These maps satisfy the
  conditions of Lemma~\ref{lem:Boolean_factorization_iff_embedding}, hence $M$ has a rank-$r$ Boolean factorization and $\rc(P) \leqslant r$. The theorem
  follows.
\end{proof}

We immediately obtain a second lattice-combinatorial characterization of the rectangle covering number of a polytope.

\begin{cor}\label{cor:lattice-embedding}
  The rectangle covering number~$\rc(P)$ of a polytope $P$ with $\dim(P) \geqslant 1$ is equal to the minimum number of facets of a polytope~$Q$ into whose face
  lattice~$\mathcal{L}(Q)$ the face lattice $\mathcal{L}(P)$ of~$P$ can be embedded.
\end{cor}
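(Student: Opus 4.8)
The plan is to prove the two inequalities separately, bridging the combinatorial and the geometric side through Theorem~\ref{thm:lattice-embedding-Boolean}, which identifies $\rc(P)$ with the smallest $r\geqslant 1$ such that $\mathcal{L}(P)$ embeds into the Boolean lattice $2^{[r]}$, together with the elementary observation that $2^{[r]}$ is itself (isomorphic to) the face lattice of a nice polytope. To bound the minimum number of facets from above by $\rc(P)$, set $r:=\rc(P)$ and let $\Delta:=\conv\{\unitVec{1},\dots,\unitVec{r}\}$ be an $(r-1)$-dimensional simplex. Its faces are exactly the convex hulls of subsets of its $r$ vertices, so $\mathcal{L}(\Delta)$ is isomorphic to $2^{[r]}$, while $\Delta$ has exactly $r$ facets. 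By Theorem~\ref{thm:lattice-embedding-Boolean}, $\mathcal{L}(P)$ embeds into $2^{[r]}\cong\mathcal{L}(\Delta)$, so $\Delta$ witnesses that the minimum number of facets in the statement is at most $r=\rc(P)$. (Here $r\geqslant 2$, since $\dim P\geqslant 1$ forces $\mathcal{L}(P)$ to contain a $2$-element antichain, which cannot embed into the chain $2^{[1]}$.)

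For the reverse inequality, let $Q$ be any polytope with facets $G_1,\dots,G_q$ such that $\mathcal{L}(P)$ embeds into $\mathcal{L}(Q)$. By composing embeddings and applying Theorem~\ref{thm:lattice-embedding-Boolean} once more, it suffices to show that $\mathcal{L}(Q)$ itself embeds into $2^{[q]}$. The natural candidate is the non-incidence map $\bar\varphi(F):=\{\,i : F\not\subseteq G_i\,\}$, which is plainly order-preserving. For the reflecting implication $\bar\varphi(F)\subseteq\bar\varphi(F')\Rightarrow F\subseteq F'$, one rewrites the hypothesis as ``every facet containing $F'$ also contains $F$'' and uses the standard fact that a proper face of a polytope is the intersection of the facets containing it, so that $F=\bigcap_{i:\,F\subseteq G_i}G_i\subseteq\bigcap_{i:\,F'\subseteq G_i}G_i=F'$; the degenerate cases $F=Q$, $F'=Q$ and $F'=\varnothing$ are checked by hand. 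This uses that the intersection of all facets of $Q$ is empty, which in turn holds because $\dim Q\geqslant 1$; and $\dim Q\geqslant 1$ is forced exactly as before, since $\mathcal{L}(Q)$ contains the $2$-antichain sitting inside the embedded copy of $\mathcal{L}(P)$.

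I expect the only point requiring real care to be this order-reflection of $\bar\varphi$, and within it the sub-claim that a polytope of dimension at least $1$ has empty intersection of all its facets. That sub-claim follows by passing to $\aff{Q}$, where the outer normals of the facets positively span the space: a point lying on every facet would make all defining inequalities tight, and positively spanning normals then force that point to be all of $Q$, contradicting $\dim Q\geqslant 1$. Everything else is a routine assembly of Theorem~\ref{thm:lattice-embedding-Boolean} with the simplex example and the two standard facts about faces of polytopes, so I do not anticipate any further obstacle.
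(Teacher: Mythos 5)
Your proposal is correct and takes essentially the same approach as the paper: the paper proves the corollary by combining Theorem~\ref{thm:lattice-embedding-Boolean} with a short auxiliary lemma (Lemma~\ref{lem:trivial_embedding}) stating that a polytope $Q$ with $r \geqslant 2$ facets has $\mathcal{L}(Q)$ embedding into $2^{[r]}$ via exactly the non-incidence map $G \mapsto \{k : G \not\subseteq G_k\}$ you use, and the paper's upper-bound direction likewise passes through the $(r-1)$-simplex. The only difference is that the paper leaves the order-reflection check as ``easily verified,'' whereas you spell it out, correctly reducing it to the fact that a proper face equals the intersection of the facets containing it together with the observation that the intersection of all facets is empty when $\dim Q \geqslant 1$.
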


Corollary~\ref{cor:lattice-embedding} follows from Theorem~\ref{thm:lattice-embedding-Boolean} and Lemma~\ref{lem:trivial_embedding} below.

\begin{lem}\label{lem:trivial_embedding}
  If $Q$ is a polytope with $r \geqslant 2$ facets then there is an embedding of $\mathcal{L}(Q)$ into $2^{[r]}$.
\end{lem}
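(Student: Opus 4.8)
The plan is to exhibit an explicit embedding. Let $F_1,\dots,F_r$ be the facets of $Q$, and define a map $h\colon\mathcal{L}(Q)\to 2^{[r]}$ by sending a face $G$ to the set
$h(G):=\{\,i\in[r] : G\not\subseteq F_i\,\}$
of indices of facets that do \emph{not} contain $G$. I need to verify that $h$ is an embedding, i.e.\ that $G\subseteq G'$ holds if and only if $h(G)\subseteq h(G')$.

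The ``only if'' direction is immediate: if $G\subseteq G'$ and a facet $F_i$ contains $G'$, then $F_i$ contains $G$ as well, so in contrapositive form $G\not\subseteq F_i$ implies $G'\not\subseteq F_i$, i.e.\ $h(G)\subseteq h(G')$. For the ``if'' direction I assume $h(G)\subseteq h(G')$ — equivalently, every facet of $Q$ containing $G'$ also contains $G$ — and want to conclude $G\subseteq G'$. The key tool here is the standard fact (see~\cite{Ziegler}) that every proper nonempty face of a polytope is the intersection of the facets containing it; since $r\ge 2$ forces $\dim Q\ge1$, this applies to $Q$. If $G'$ is a proper nonempty face, this fact gives $G'=\bigcap\{F_i : G'\subseteq F_i\}\supseteq G$ at once. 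The remaining, degenerate cases are handled directly: $G=\varnothing$ is contained in every face; $G'=Q$ contains every face; and if $G'=\varnothing$ then $h(G')=\varnothing$, so $h(G)=\varnothing$, meaning $G$ lies in every facet and hence in $\bigcap_i F_i$, which is empty for a polytope of dimension at least $1$ (again a consequence of the ``intersection of facets'' property, applied to the empty face), so $G=\varnothing=G'$.

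The only thing that requires any care is the bookkeeping for the non-proper faces $\varnothing$ and $Q$ and making sure the ``faces are intersections of facets'' fact is invoked only for proper nonempty $G'$; beyond this routine case split there is no real obstacle.

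For completeness I note a short alternative that bypasses the case analysis altogether: by Theorem~\ref{thm:lattice-embedding-Boolean} the lattice $\mathcal{L}(Q)$ embeds into $2^{[\rc(Q)]}$, and by~\eqref{eq:rcxc} together with the trivial bound $\xc{Q}\le r$ (the polytope $Q$ is an extension of itself and has $r$ facets) we have $\rc(Q)\le r$; composing the first embedding with the obvious inclusion‑induced embedding $2^{[\rc(Q)]}\hookrightarrow 2^{[r]}$ then gives $\mathcal{L}(Q)\hookrightarrow 2^{[r]}$.
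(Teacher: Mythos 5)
Your map $h(G)=\{i\in[r]:G\not\subseteq F_i\}$ is exactly the one the paper uses; the paper simply states it is ``easily verified'' to be an embedding, and your write-up correctly supplies that verification, including the case analysis for $\varnothing$ and $Q$. (The appeal to the fact that $\bigcap_i F_i=\varnothing$ for $\dim Q\ge 1$ is sound, even though the standard ``a face equals the intersection of facets containing it'' statement is usually phrased for proper nonempty faces; the alternative route via Theorem~\ref{thm:lattice-embedding-Boolean} and~\eqref{eq:rcxc} is also valid and not circular.)
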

\begin{proof}
  We denote by $G_1$, \ldots, $G_r$ the facets of $Q$, and define a map $h$ from $\mathcal{L}(Q)$ to $2^{[r]}$ by letting $h(G) := \{k \in [r] \mid G
  \not\subseteq G_k\}$ for all faces $G$ of $Q$. It is easily verified that $h$ is an embedding.
\end{proof}

The embedding $\mathcal{L}(P) \to \mathcal{L}(Q)$ given by Corollary~\ref{cor:lattice-embedding} is not always meet-faithful. It is unclear whether requiring
that the embedding be meet-faithful would give a (much) better bound.

\begin{rem}
  Similarly to the extension complexity, the rectangle covering number is easily shown to be monotone on extensions and on faces. If the polytope $Q$
  is an extension of the polytope $P$, then we may infer, e.g., from Proposition~\ref{prop:embeddingFacLat} and Corollary~\ref{cor:lattice-embedding}
  that $\rc(Q) \geqslant \rc(P)$. Moreover, by reasoning on the slack matrices directly, we see that $\rc(P) \geqslant \rc(F)$ whenever polytope $P$ has $F$ as
  a face.
\end{rem}

\section{Upper Bounds on the Rectangle Covering Number}

In this section, we discuss some examples of polytopes for which small rectangle coverings can be found, as well as methods for constructing such rectangle coverings. These examples illustrate cases where the rectangle covering bound shows its limitations. Cases where the rectangle covering bound can be successfully applied to obtain interesting lower bounds on the \mnfx{} are discussed in the Section~\ref{sec:lb}.

\subsection{The Perfect Matching Polytope}

The \emph{perfect matching} polytope $\PMatch{n}$ is defined as the convex hull of characteristic vectors of perfect matchings in the complete graph $K^n = (V_n,E_n)$. A linear description of the perfect matching polytope $\PMatch{n}$ is as follows~\cite{Edmonds65}:
\begin{align*}\label{eq:PMPoly}
	\PMatch{n}=\{x\in\RR^{E_n}\,:\,
	\begin{array}[t]{rcl}
	x(\delta(v)) &= &1\text{ for all }v \in V_n,\\
	x(\delta(S)) &\geqslant &1\text{ for all } S \subseteq V_n, 3\leqslant |S| \leqslant n-3, |S| \text{ odd}\\
	x_e &\geqslant &0\text{ for all }e \in E_n\,\},
	\end{array}
\end{align*}
where $\delta(S)$ denotes all edges in the graph with exactly one endpoint in $S$, $\delta(v) := \delta(\{v\})$, and $x(F) := \sum_{e \in F} x_e$ for all edge sets $F \subseteq E_n$. Currently, no polynomial-size extension is known for the perfect matching polytope $\PMatch{n}$. Moreover, it was shown that under certain symmetry requirements no polynomial-size extension exists~\cite{Yannakakis91,KaibelPashkovichTheis10}. On the other side it is possible that a non-symmetric polynomial-size extension for the perfect matching polytope can be found~\cite{KaibelPashkovichTheis10}. In this context any non-trivial statement about the \mnfx{} of $\PMatch{n}$ is interesting.

The non-incidence matrix $M$ (w.r.t.\ the above system of constraints and the  characteristic vectors of perfect matchings) of the perfect matching polytope has $\Theta(n^2)$ rows corresponding to the non-negative constraints and $\Theta(2^n)$ rows corresponding to the inequalities indexed by odd subsets of vertices of $K^n$. A facet corresponding to some odd set $S$ and a vertex corresponding to some matching are non-incident if and only if the matching has more than one edge in $\delta(S)$.

\begin{lem}
The rectangle covering number  $\rc(\PMatch{n})$ is $O(n^4)$. 
\end{lem}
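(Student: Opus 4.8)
The plan is to construct an explicit rectangle covering of the facet vs.\ vertex non-incidence matrix $M$ of $\PMatch{n}$ using $O(n^4)$ rectangles, and then to invoke $\rc(\PMatch{n}) = \rc(M)$ (Lemma~\ref{lem:rc_independent_of_S}). Recall that the rows of $M$ split into the $\Theta(n^2)$ rows coming from nonnegativity constraints $x_e \geqslant 0$ and the $\Theta(2^n)$ rows coming from odd-set constraints $x(\delta(S)) \geqslant 1$, while the columns are indexed by perfect matchings. A facet row $x_e \geqslant 0$ is non-incident to a matching column $N$ precisely when $e \in N$; a facet row for the odd set $S$ is non-incident to $N$ precisely when $\nvtx{N \cap \delta(S)} \geqslant 3$ (since this number is always odd and positive for a perfect matching).

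First I would handle the nonnegativity rows. For each edge $e \in E_n$ take the rectangle $R_e := \{\, x_e \geqslant 0 \,\} \times \{\, N : e \in N \,\}$. Every entry $1$ of $M$ in a nonnegativity row lies in exactly one such $R_e$, and each $R_e$ is clearly a $1$-rectangle. This uses $\nvtx{E_n} = \binom{n}{2} = O(n^2)$ rectangles, which is within budget.

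The main work is covering the $1$-entries in the odd-set rows, and this is where the $O(n^4)$ bound comes from. The key observation is that if $\nvtx{N \cap \delta(S)} \geqslant 3$, then $N$ contains at least three edges $e_1, e_2, e_3$ each crossing the cut $\delta(S)$; in particular there are (at least) four distinct vertices $a_1, b_1, a_2, b_2$ with $e_1 = a_1b_1$, $e_2 = a_2b_2$, $a_1, a_2 \in S$, $b_1, b_2 \notin S$, and $e_1 \ne e_2$ and $e_1, e_2 \in N$. For each ordered $4$-tuple of distinct vertices $(a_1, b_1, a_2, b_2)$ define the rectangle
\[
R_{(a_1,b_1,a_2,b_2)} := \{\, S : a_1, a_2 \in S,\ b_1, b_2 \notin S \,\} \times \{\, N : a_1b_1 \in N,\ a_2b_2 \in N \,\}.
\]
This is a $1$-rectangle: if $a_1b_1, a_2b_2 \in N$ and $a_1, a_2 \in S$, $b_1, b_2 \notin S$, then $a_1b_1$ and $a_2b_2$ are two distinct edges of $N$ in $\delta(S)$, so $\nvtx{N \cap \delta(S)} \geqslant 2$, hence $\geqslant 3$ by parity, so the corresponding entry of $M$ is $1$. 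Conversely, by the observation above, every $1$-entry $(S, N)$ in an odd-set row is covered: pick two distinct edges $a_1b_1, a_2b_2 \in N \cap \delta(S)$ with $a_1, a_2 \in S$; then $(S,N) \in R_{(a_1,b_1,a_2,b_2)}$. The number of such tuples is $n(n-1)(n-2)(n-3) = O(n^4)$. Adding the $O(n^2)$ rectangles from the first step, we obtain a rectangle covering of $M$ of size $O(n^4)$, hence $\rc(\PMatch{n}) = O(n^4)$.

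The step I expect to require the most care is verifying that the odd-set rectangles are genuinely $1$-rectangles and that parity is being used correctly: one must check that a matching edge in $\delta(S)$ is never counted twice, that two distinct guaranteed crossing edges really exist whenever the entry is $1$ (which is exactly the parity argument $\nvtx{N\cap\delta(S)}$ odd and $\geqslant 1$ forces $\geqslant 1$, and $\geqslant 2$ forces $\geqslant 3$), and that the tuple can always be chosen with $a_1 \ne a_2$ and $b_1 \ne b_2$ (automatic since $e_1 \ne e_2$ are both matching edges, hence vertex-disjoint). Everything else is routine bookkeeping on the sizes of the index sets.
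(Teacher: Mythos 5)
Your proof is correct and takes essentially the same approach as the paper: cover the nonnegativity rows trivially by $O(n^2)$ single-row rectangles, and cover the odd-set rows by $O(n^4)$ rectangles each indexed by a pair of crossing matching edges, with the row set being the odd sets crossed by both edges and the column set being the matchings containing both edges. The only cosmetic difference is that you index rectangles by ordered $4$-tuples of vertices (thereby fixing which endpoint of each edge lies inside $S$), whereas the paper uses unordered pairs of edges (so each of the paper's rectangles is the union of four of yours); both index sets have size $O(n^4)$.
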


\begin{proof}
The non-zero entries in rows corresponding to the non-negative constraints can be trivially covered by $O(n^2)$ 1-rectangles of height one. Less obvious is the fact that the non-zero entries corresponding to the odd-subset inequalities can be covered by $O(n^4)$ 1-rectangles~\cite{Yannakakis91}. For this one considers the  rectangles $R_{e_1,e_2} = I_{e_1,e_2}\times J_{e_1,e_2}$ indexed by unordered pair of edges $e_1, e_2$, where the set $I_{e_1,e_2}$ consists of all odd sets $S$ such that $e_1$, $e_2\in\delta(S)$ and the set $J_{e_1,e_2}$ consists of all the matchings containing both edges $e_1, e_2$. 
\end{proof}

Thus in the case of $\PMatch{n}$ we cannot obtain any lower bound better than $O(n^4)$ solely by reasoning on rectangle coverings.

\subsection{Polytopes with Few Vertices on Every Facet}

We consider a polytope $P$ with $n$ vertices such that every facet of $P$ contains at most $k$ vertices, and its facet vs.\ vertex non-incidence matrix $M$. Each row of $M$ has at most $k$ zeros, and at least $n-k$ ones. We now prove that the rectangle covering number of such a polytope is necessarily small. 

\begin{prop}
\label{prop:UB_few_vtcs_per_facet}
{If $P$ is a polytope with $n$ vertices and at most $k$ vertices on each facet, then $\rc(P) = O(k^2 \log n)$.}
\end{prop}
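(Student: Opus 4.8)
The plan is to use a probabilistic argument. We want to cover the support of the facet vs.\ vertex non-incidence matrix $M$, i.e., all pairs $(F,v)$ where facet $F$ does not contain vertex $v$. Note that for each vertex $v$, the set of facets \emph{containing} $v$ has size at most... well, we don't control that directly, but the key dual fact is that each facet contains at most $k$ vertices, so each row of $M$ has at least $n-k$ ones. Equivalently, each \emph{column} of $M$ (indexed by a vertex $v$) has zeros only in the rows of facets containing $v$; the number of such facets can be large, so the columns are not sparse. Instead we exploit sparsity of the rows.

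First I would build rectangles as follows. Pick a random subset $J$ of the vertex set, including each vertex independently with probability $p = 1/k$ (to be tuned). Given $J$, let $I_J$ be the set of all facets $F$ such that $F$ contains \emph{no} vertex of $J$; then $R_J := I_J \times J$ is a $1$-rectangle, since every $F \in I_J$ is non-incident to every $v \in J$. Now fix a non-incident pair $(F,v)$, so $v \notin F$. The event that $R_J$ covers $(F,v)$ is: $v \in J$ \emph{and} $F$ contains no vertex of $J$. Since $F$ has at most $k$ vertices and $v$ is not among them, these two events concern disjoint coordinate sets, hence are independent; the probability is $p\cdot(1-p)^{|F\cap V|} \ge p(1-p)^k$. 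With $p = 1/k$ this is at least $\frac{1}{k}(1-1/k)^k \ge \frac{1}{4k}$ for $k\ge 2$. So a single random rectangle covers each fixed support entry with probability $\Omega(1/k)$. Draw $t$ independent such rectangles; the probability that a fixed entry is missed by all of them is at most $(1-\Omega(1/k))^t \le \exp(-\Omega(t/k))$. There are at most $f\cdot n$ support entries, and $f \le \binom{n}{k}+\cdots$, but crudely $\log(fn) = O(k\log n)$ since every facet is determined by its (at most $k$) vertices, so $f \le \sum_{j\le k}\binom nj \le n^{k}$ (for $k \ge 1$), giving $\log(fn) = O(k \log n)$. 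Choosing $t = C k\log(fn) = O(k^2\log n)$ makes the expected number of uncovered entries strictly less than $1$, so some choice of $t$ rectangles covers all of $\supp(M)$. This yields $\rc(P) = \rc(M) \le t = O(k^2\log n)$.

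The main obstacle I anticipate is pinning down the bound $\log f = O(k\log n)$ cleanly: one must argue that a facet is determined by the set of vertices it contains (true, since a face of a polytope is the convex hull of the vertices it contains), so distinct facets give distinct $\le k$-subsets of the $n$ vertices, whence $f \le n^k$ and $\log(fn) \le (k+1)\log n = O(k\log n)$. The rest is a routine union bound once the independence observation — that the "$v\in J$" event and the "$F\cap J = \varnothing$" event depend on disjoint sets of coordinates precisely because $v\notin F$ — is made explicit. A minor point to handle is the degenerate cases $k \le 1$ (then every facet has at most one vertex, $P$ is a segment, and $\rc(P)$ is constant) and ensuring $p\le 1$, which both hold for $k\ge 2$.
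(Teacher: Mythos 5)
Your proposal is correct and is essentially the paper's proof: both use random $1$-rectangles built from a random subset of vertices, show each non-incidence survives with probability $\Omega(1/k)$, and close with a union bound over $O(k\log n)$ bits' worth of support entries (your $V$ is the complement of the paper's random set, and where you observe $f \le n^k$ the paper instead pads $M$ with extra rows so that the union bound runs over all $\leqslant k$-subsets, giving the same $(n+1)^k$-type count). The paper factors the argument through a more general lemma with an extra parameter $\ell$ because it reuses that lemma elsewhere; for this proposition alone your direct argument is equivalent.
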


{Before giving the proof, we state the following lemma that is the main ingredient of the proof.}

\begin{lem} \label{lem:cov_gen} Let $M$ be the 0/1 matrix with rows indexed by $\{S \subseteq \ints{n} : |S| \leqslant k\}$ and columns indexed by $\{T \subseteq \ints{n} : |T| \leqslant \ell\}$, where $\ell \leqslant k \leqslant n$, such that the entry $M_{S,T}$ is non-zero if and only if the sets $S$, $T$ are disjoint. Then $\rc(M) = O((k + \ell)e^{\ell}(1 + k/\ell)^{\ell} \log n)$.
\end{lem}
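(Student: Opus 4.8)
The plan is to use a random coloring / probabilistic covering argument combined with a "witness" structure coming from the disjointness condition. Fix the ground set $\ints{n}$. For a row indexed by $S$ (with $|S|\le k$) and a column indexed by $T$ (with $|T|\le\ell$), the entry $M_{S,T}$ is $1$ exactly when $S\cap T=\varnothing$, i.e. when the (at most $k+\ell$ many) elements of $S$ and the elements of $T$ can be separated. The natural rectangle family to work with is: for every ordered pair $(A,B)$ of disjoint subsets of $\ints{n}$ with $|A|\le k$ and $|B|\le\ell$, take the rectangle $R_{A,B}=\setDef{S}{S\subseteq A}\times\setDef{T}{T\subseteq B}$; this is a $1$-rectangle because $S\subseteq A$, $T\subseteq B$, $A\cap B=\varnothing$ forces $S\cap T=\varnothing$, and every $1$-entry $(S,T)$ is covered by $R_{S,T}$. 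So $M$ has a rectangle covering, but of size roughly $n^{k+\ell}$, which is far too large; the point of the lemma is to thin it out.

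First I would reduce to covering by rectangles induced by a random partition. Let $t$ be an integer to be chosen and pick a uniformly random coloring $\chi:\ints{n}\to\ints{t}$. For a $1$-entry $(S,T)$, since $S$ and $T$ are disjoint, the event that $\chi$ is injective on $S\cup T$ and that the color classes of $S$ and of $T$ are disjoint sets of colors occurs with probability at least roughly $\binom{t}{|S|}\binom{t-|S|}{|T|}(|S|)!\,(|T|)!\,/\,t^{|S|+|T|}$; crucially this is bounded below by a quantity depending only on $k$ and $\ell$ once $t$ is, say, a constant multiple of $k+\ell$. Concretely, with $t=\Theta(k+\ell)$ one gets a success probability at least $p$ with $1/p = O\bigl(e^{\ell}(1+k/\ell)^{\ell}\bigr)$-type bound (this is where the awkward factor in the statement comes from: separating an $\le\ell$-set from an $\le k$-set inside $\ints{t}$ costs a factor that, after optimizing $t$, is $e^{\ell}(1+k/\ell)^{\ell}$ up to the polynomial-in-$(k+\ell)$ prefactor). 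Given such a coloring, the pairs $(I,J)$ of disjoint subsets of $\ints{t}$ with $|I|\le k$, $|J|\le\ell$ yield rectangles $\setDef{S}{\chi(S)\subseteq I}\times\setDef{T}{\chi(T)\subseteq J}$ — there are only $3^{t}=2^{O(k+\ell)}$ of them — and a $1$-entry $(S,T)$ is covered by this family whenever the coloring "succeeds" for $(S,T)$ in the above sense.

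Next I would run the standard union-bound / coupon-collector step: take $N$ independent random colorings $\chi_1,\dots,\chi_N$. A fixed $1$-entry fails to be covered by all of them with probability at most $(1-p)^N$. The number of $1$-entries is at most (number of rows)$\times$(number of columns) $\le n^{k}\cdot n^{\ell}=n^{k+\ell}$, so choosing $N$ with $(1-p)^N n^{k+\ell}<1$, i.e. $N = O\bigl((k+\ell)\,p^{-1}\log n\bigr)$, guarantees by the union bound that some collection of $N$ colorings covers every $1$-entry. The total number of rectangles used is then $N\cdot 3^{t}$. With $t=\Theta(k+\ell)$ the factor $3^{t}=2^{O(k+\ell)}$ is absorbed (or, more carefully, one tracks it to land exactly on the stated $O((k+\ell)e^{\ell}(1+k/\ell)^{\ell}\log n)$ bound — note $2^{O(k+\ell)}$ may need to be folded into a cleaner closed form, and here one should choose $t$ a bit more carefully, e.g. $t = k+\ell$ rather than a large constant multiple, so that the extra factor is genuinely of the claimed order). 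Substituting $p^{-1} = O\bigl(e^{\ell}(1+k/\ell)^{\ell}\cdot\mathrm{poly}(k+\ell)\bigr)$ gives the claim.

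The main obstacle is the precise bookkeeping of the separation probability $p$ and the size of the auxiliary rectangle family, so that the final bound comes out exactly as $O\bigl((k+\ell)e^{\ell}(1+k/\ell)^{\ell}\log n\bigr)$ rather than with extra spurious exponential-in-$k$ factors. This forces the right choice of the number of colors $t$: one wants $t$ large enough that an $\le(k+\ell)$-subset of $\ints{n}$ is rainbow-colored and $S$-colors avoid $T$-colors with decent probability, but small enough that $3^{t}$ (the number of sub-rectangles per coloring) does not dominate. Getting these two competing requirements to balance at the stated bound — essentially optimizing a binomial-coefficient expression — is the only genuinely delicate part; everything else is a routine probabilistic covering argument. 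The proposition then follows by applying the lemma with $\ell=d$ (the dimension) to the facet vs.\ vertex non-incidence matrix, viewing each facet as (a superset of) the $\le k$-set of its vertices and each vertex $v$ as a $\le(d)$-set of vertices whose convex hull is a face through $v$ — actually more simply with $\ell$ a small constant, $\ell=1$, identifying columns with singletons and rows with the $\le k$-vertex-sets of facets, which already yields $\rc(P)=O(k^2\log n)$.
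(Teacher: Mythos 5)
Your plan runs into a genuine quantitative obstacle that the choice of $t$ cannot fix: the requirement that a random $t$-coloring be injective on $S\cup T$ (the ``rainbow'' condition) has probability exponentially small in $k+\ell$, not in $\ell$. With $t=\Theta(k+\ell)$ the rainbow probability is $\prod_{i=0}^{k+\ell-1}(1-i/t)\approx e^{-\Theta(k+\ell)}$, so your $1/p$ is $e^{\Theta(k)}$-large and the resulting $N=O\bigl((k+\ell)p^{-1}\log n\bigr)$ has an $e^{\Omega(k)}$ factor that cannot be absorbed into the claimed $O\bigl((k+\ell)e^{\ell}(1+k/\ell)^{\ell}\log n\bigr)$. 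Pushing $t$ up to $\Omega((k+\ell)^2)$ (the birthday threshold) makes the rainbow probability constant, but then the number of disjoint-pair rectangles per coloring explodes. Your own hedge about ``extra spurious exponential-in-$k$ factors'' is the crux; as stated, the factors are there and do not cancel.

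The fix is exactly what the paper does, and it is also what you recover if you drop the injectivity requirement from your coloring and collapse to a $2$-coloring: pick a single random subset $V\subseteq[n]$ by including each element independently with probability $p$, and for each $V$ use \emph{one} rectangle, namely $I_V\times J_V$ with $I_V=\setDef{S}{S\subseteq V,\ |S|\le k}$ and $J_V=\setDef{T}{T\cap V=\varnothing,\ |T|\le\ell}$. A disjoint pair $(S,T)$ is covered iff $S\subseteq V$ and $T\cap V=\varnothing$, which happens with probability at least $p^k(1-p)^{\ell}$; this imposes no injectivity and thus no $e^{-\Theta(k)}$ penalty. Optimizing $p=k/(k+\ell)$ gives a success probability $q\ge e^{-\ell}\bigl(\ell/(k+\ell)\bigr)^{\ell}$ (here one uses $\bigl(k/(k+\ell)\bigr)^k\ge e^{-\ell}$), and a union bound over the at most $(n+1)^{k+\ell}$ entries with $r$ independent draws of $V$ gives $r=O\bigl((k+\ell)q^{-1}\log n\bigr)=O\bigl((k+\ell)e^{\ell}(1+k/\ell)^{\ell}\log n\bigr)$ rectangles. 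Notice there is no secondary family of sub-rectangles per draw, so no $3^t$ or $\binom{t}{k}$ factor appears. The asymmetry between $S$ and $T$ in the target bound (exponential in $\ell$, polynomial in $k$) is precisely the signature of this choice of $p$ close to $1$; it is incompatible with treating $S$ and $T$ symmetrically via a rainbow condition. Your final remark on deriving Proposition~\ref{prop:UB_few_vtcs_per_facet} with $\ell=1$ is correct and matches the paper.
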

\begin{proof}
	Consider rectangles $I_V\times J_V$, $V\subseteq\ints{n}$, where $I_V :=\setDef{S\subseteq\ints{n}}{S\subseteq V, |S| \leqslant k}$ and $J_V :=\setDef{T\subseteq\ints{n}}{T\cap V=\varnothing, |T| \leqslant \ell}$.
	Obviously, every rectangle $I_V\times J_V$,  $V\subseteq\ints{n}$ is contained in $\supp(M)$.

	Pick a set $V\subseteq\ints{n}$ by selecting the points from $\ints{n}$ independently with probability $p$, for some $p \in [0,1]$. For a fixed pair $(S, T)$ of disjoint sets $S$, $T \subseteq\ints{n}$ with $|S| \leqslant k$ and $|T| \leqslant \ell$, the probability to be covered is at least 
	\begin{equation*}
		q:=p^k (1-p)^{\ell},
	\end{equation*}
	thus choosing the probability $p$ equal to $\frac{k}{k+\ell}$ to maximize the value $q$ we get
	\begin{equation*}
		q=\left(\frac{k}{k+\ell}\right)^{k}\left(1-\frac{k}{k+\ell}\right)^{\ell} \ge e^{-\ell} \left(\frac{\ell}{k+\ell}\right)^{\ell}\,.
	\end{equation*}

	Now, let us bound the natural logarithm of the expected number of entries from $\supp(M)$ which are not covered if we choose independently $r$ such rectangles. An upper bound on this quantity is:
	\begin{align*}
		\ln\left((n+1)^k (n+1)^\ell (1-q)^r\right) &\le (k+\ell) \ln(n+1) +r\ln(1-q)\\
		&\le (k+\ell) \ln(n+1) - r q\\
		&\le (k+\ell)\ln(n+1)-r e^{-\ell} \left(\frac{\ell}{k+\ell}\right)^{\ell}\,.	
	\end{align*}
	If the above upper bound for the logarithm of the expected number of not covered entries from $\supp(M)$ is negative, we can conclude that there exists a rectangle cover for the matrix $M$ of size $r$. Thus there exists a rectangle cover of size $ O((k + \ell)e^{\ell}(1 + k/\ell)^{\ell} \log n)$.
\end{proof}

\begin{proof}[Proof of Proposition \ref{prop:UB_few_vtcs_per_facet}]
{Let $M$ be the facet vs.\ vertex non-incidence matrix of $P$. It suffices to show that $\rc(M) = O(k^2 \log n)$.} We may extend $M$ by adding extra rows in order to obtain a matrix in which \emph{each} binary vector of size $n$ with at most $k$ zeros appears as a row. Obviously, this operation does not decrease $\rc(M)$. { The result then follows from Lemma~\ref{lem:cov_gen} by taking $\ell := 1$.}
\end{proof}
 
The most natural case where Proposition \ref{prop:UB_few_vtcs_per_facet} applies is perhaps when $P$ is a simplicial $d$-dimensional polytope with $n$ vertices. For such a polytope there is a rectangle covering of the non-incidence matrix with $O(d^2 \log n)$ rectangles.

\subsection{The Edge Polytope}

The \emph{edge polytope} $\PEdge{G}$ of a graph $G$ is defined as the convex hull of the incidence vectors in $\R^{V(G)}$ of all edges of $G$. Thus $\PEdge{G}$ is a 0/1-polytope with $|E(G)|$ vertices in $\R^{V(G)}$. Consider a stable set $S$ of $G$. Denoting by $N(S)$ the neighborhood of $S$ in $G$, we see that the inequality
\begin{equation}
\label{eq:P(G)_stable}
x(S) - x(N(S)) \leqslant 0
\end{equation}
is valid for $\PEdge{G}$. It can be shown \cite{KaibelLoos11} that these inequalities, together with { $x_v \geqslant 0$ for $v \in V(G)$} and $x(V(G))=2$, form a complete linear description of $\PEdge{G}$.

\subsubsection{A Sub-quadatric Size Extension for All Graphs}

The following lemma provides an upper bound not just on the rectangle covering number of $\PEdge{G}$, but also on the \mnfx{} of $\PEdge{G}$.

\begin{lem}
 For every graph $G$ with $n$ vertices there exists an extension of the edge polytope $\PEdge{G}$ of size $O(n^2/\log n)$.
\end{lem}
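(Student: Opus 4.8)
The plan is to exploit the complete linear description of $\PEdge{G}$ by the inequalities \eqref{eq:P(G)_stable} (ranging over stable sets $S$), together with the nonnegativity constraints $x_v\geqslant 0$ and the equation $x(V(G))=2$. Since $\PEdge{G}$ has $|E(G)|$ vertices, trivially $\xc{\PEdge{G}}\leqslant |E(G)|=O(n^2)$, but this is not enough; we need to shave off a logarithmic factor, so we must build a genuinely smaller extension rather than just count facets. The key observation is that a vertex (edge) $\{u,w\}$ of $\PEdge{G}$ violates the equality case of \eqref{eq:P(G)_stable} for a stable set $S$ precisely according to how $\{u,w\}$ meets $S$ and $N(S)$; the slack of edge $uw$ in \eqref{eq:P(G)_stable} is $\mathbbm{1}[u\in S]+\mathbbm{1}[w\in S]-\mathbbm{1}[u\in N(S)]-\mathbbm{1}[w\in N(S)]$, which is a very low-complexity function of the pair $(u,w)$.

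First I would reduce the problem to covering/factoring the part of the slack matrix coming from the stable-set inequalities, since the $O(n)$ rows $x_v\geqslant 0$ and the equation contribute only $O(n)$ to any extension and are thus negligible. Next, I would group the edges (columns) by their two endpoints and think of the extension as living in a space indexed by vertices or pairs of vertices of $G$: the idea is to introduce, for each vertex $v$, auxiliary coordinates recording $\sum_{u\in S}[\text{something about }u]$ aggregated cleverly, so that each stable-set inequality becomes expressible using only $O(n/\log n)$ "blocks." Concretely, one partitions $V(G)$ into $O(n/\log n)$ groups each of size $O(\log n)$; within a group of size $t=O(\log n)$ there are only $2^t=n^{O(1)}$ — actually $\operatorname{poly}(n)$, but more to the point $O(\operatorname{poly})$ — possible intersection patterns, but the cheap win is that encoding the "membership in $S$ vs.\ $N(S)$" status of a $\log n$-sized group of vertices costs only $O(\log n)$ bits, i.e.\ $O(\log n)$ many $0/1$ auxiliary variables, and there are $O(n/\log n)$ groups, for $O(n)$ variables total — which is not yet a saving. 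The actual saving must come from a disjunctive/union-type construction: one writes $\PEdge{G}$ as the projection of a polytope that is built as a "product over groups" glued along the equation, exploiting that an edge has only \emph{two} endpoints so it touches at most two groups, making the gluing sparse; I would model this after the standard trick (cf.\ the permutahedron and spanning-tree constructions cited in the introduction) of balancing the number of auxiliary variables against the number of constraints, optimizing the group size to land at $O(n^2/\log n)$.

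The main obstacle I anticipate is getting the bookkeeping of the disjunctive construction exactly right so that the final count is $O(n^2/\log n)$ and not $O(n^2)$ or $O(n^2\log n)$: one has to verify that the projection really recovers $\PEdge{G}$ (using the completeness of the description by \eqref{eq:P(G)_stable}, which guarantees no spurious facets are needed), and one has to check that the stable-set inequalities, when lifted, genuinely decompose across the chosen groups with only $O(n^2/\log n)$ total pieces — the naive decomposition has one piece per stable set, which is exponential, so the grouping must collapse exponentially many stable sets into polynomially many constraint-patterns. I would carry this out by (i) fixing a partition of $V(G)$ into blocks $B_1,\dots,B_q$ with $q=O(n/\log n)$ and $|B_i|=O(\log n)$; (ii) for each block $B_i$ and each of the $\leqslant |B_i|+1$ possible values of $x(B_i)$ on a vertex of $\PEdge{G}$ (an edge contributes $0,1$, or $2$ to a block), introducing indicator variables, giving $O(q\log n)=O(n)$ variables but — crucially — letting the stable-set inequalities be enforced \emph{block-locally}, so that the number of distinct inequalities needed is bounded by $q$ times the number of local patterns per block, which is $2^{O(\log n)}=\operatorname{poly}(n)$; (iii) balancing block size against this count to obtain the stated $O(n^2/\log n)$ bound. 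Finally I would double-check, via Theorem~\ref{thm:extComplNonnegRk} or directly, that correctness of the projection only requires that every vertex of $\PEdge{G}$ lifts and that every point of the extension projects into $\conv$ of the edge-incidence vectors, which follows because we have imposed (lifted forms of) a complete inequality description.
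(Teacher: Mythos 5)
There is a genuine gap. Your plan hinges on the claim that after partitioning $V(G)$ into blocks $B_1,\dots,B_q$ of size $O(\log n)$, the stable-set inequalities $x(S)-x(N(S))\leqslant 0$ can be ``enforced block-locally'' and that the resulting count is dominated by $q$ times a per-block pattern count. Neither half of this holds up. The inequality $x(S)-x(N(S))\leqslant 0$ is genuinely global: a stable set $S$ and its neighborhood $N(S)$ can each meet every block, and the inequality depends on \emph{which} vertices of each block lie in $S$, in $N(S)$, or in neither --- not on the aggregate quantities $x(B_i)$. Your proposed auxiliary variables (indicators for the values of $x(B_i)$ on a vertex of $\PEdge{G}$) simply do not carry enough information to express this constraint block by block, and no disjunction is specified that would recover the lost information. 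Moreover even the arithmetic you sketch is off: you price the construction at ``$q$ times the number of local patterns per block,'' i.e.\ $(n/\log n)\cdot 2^{O(\log n)}=(n/\log n)\cdot n^{O(1)}$, which with an unspecified constant in the exponent is not $O(n^2/\log n)$; you defer the ``balancing'' but never carry it out, and there is no reason it would land on the claimed bound.

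The paper's actual route is quite different and sidesteps these difficulties entirely. Rather than attacking the facet description, one decomposes the \emph{vertex set} of $\PEdge{G}$ (the edges of $G$): cover $E(G)$ by bicliques $H_1,\dots,H_t\subseteq G$. The edge polytope of a biclique $H$ with parts $W,U$ is a transportation-type polytope described by $x(W)=1$, $x(U)=1$, and $x_v\geqslant 0$ for $v\in V(H)$, so it has only $O(|V(H)|)$ facets. Since $\PEdge{G}=\conv\bigl(\bigcup_i \PEdge{H_i}\bigr)$, Balas's disjunctive programming yields an extension of size $O(t+\sum_i |V(H_i)|)=O(\sum_i |V(H_i)|)$. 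The logarithmic saving then comes from Tuza's theorem, which guarantees a biclique edge cover of $G$ with $\sum_i |V(H_i)|=O(n^2/\log n)$. Your proposal never invokes biclique covers or any comparable external combinatorial input, and that input is exactly what produces the $1/\log n$ factor.
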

\begin{proof}
  Let $H$ denote a biclique (i.e., complete bipartite graph) with bipartition $W$, $U$. The edge polytope $P(H)$ of $H$ has a linear description of size
  $O(|V(H)|)$. Namely, in addition to the equalities $x(W)=1$ and $x(U)=1$, the linear description consists of the nonnegativity constraints $x_v \geqslant 0$
  for $v \in V(H)$. From any covering of the edges of $G$ by bicliques $H_1,\ldots, H_{t}$ (with $H_i \subseteq G$ for all $i$), we can construct an extension
  of $\PEdge{G}$ of the size $O(t+|V(H_1)|+\ldots+|V(H_t)|)$ using disjunctive programming \cite{Balas1985}. Since $t \leqslant |V(H_1)|+\ldots+|V(H_t)|$ and
  since there is such a covering with $|V(H_1)|+\ldots+|V(H_t)| = O(n^2/\log n)$ \cite{Tuza84} the lemma follows.
\end{proof}

\subsubsection{Dense Graphs}

For two  matrices $A$ and $B$ of the same size, denote by $A\centerdot B$ the entry-wise (or Hadamard) product: $(A\centerdot B)_{i,j} = A_{i,j}B_{i,j}$. The following statements are easy to verify.

\begin{lem} \label{lem:centerdot_nnegrank} \mbox{}
  \begin{enumerate}[(a)]
  \item For any nonnegative real matrices we have $\displaystyle \nnegrank(A\centerdot B) \leqslant \nnegrank(A) \, \nnegrank(B)$.
  \item For any real matrices we have  $\displaystyle \rc(A\centerdot B) \leqslant \rc (A) \,\rc (B)$.
  \end{enumerate}
\end{lem}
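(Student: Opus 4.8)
The plan is to establish both parts by taking optimal factorizations of $A$ and $B$ separately and forming all pairwise products of the rank-one pieces. For part (a), suppose $A = \sum_{k=1}^{p} a_k \transpose{c_k}$ and $B = \sum_{\ell=1}^{q} b_\ell \transpose{d_\ell}$ are nonnegative rank-one decompositions with $p = \nnegrank(A)$ and $q = \nnegrank(B)$, where all vectors $a_k, c_k, b_\ell, d_\ell$ are entry-wise nonnegative. The key observation is that the Hadamard product distributes over sums, so that
\begin{equation*}
A \centerdot B = \left(\sum_{k=1}^{p} a_k \transpose{c_k}\right) \centerdot \left(\sum_{\ell=1}^{q} b_\ell \transpose{d_\ell}\right) = \sum_{k=1}^{p}\sum_{\ell=1}^{q} \left(a_k \transpose{c_k}\right) \centerdot \left(b_\ell \transpose{d_\ell}\right),
\end{equation*}
and each term factors as $\left(a_k \transpose{c_k}\right) \centerdot \left(b_\ell \transpose{d_\ell}\right) = (a_k \centerdot b_\ell)\transpose{(c_k \centerdot d_\ell)}$, which is again a nonnegative rank-one matrix. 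This exhibits $A\centerdot B$ as a sum of $pq$ nonnegative rank-one matrices, so $\nnegrank(A\centerdot B) \le pq = \nnegrank(A)\,\nnegrank(B)$.

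For part (b), the same idea works at the level of supports. Let $\{I_k \times K_k\}_{k=1}^{p}$ be a rectangle covering of the support of $\suppmat(A)$ with $p = \rc(A)$, and $\{J_\ell \times L_\ell\}_{\ell=1}^{q}$ a rectangle covering of $\suppmat(B)$ with $q = \rc(B)$ (here $A,B$ need only have supports, so the ``real matrices'' statement makes sense via $\suppmat$). Since $(i,j) \in \supp(A\centerdot B)$ if and only if $(i,j)\in\supp(A)$ and $(i,j)\in\supp(B)$, an entry of $A\centerdot B$ is nonzero exactly when it is covered by some $I_k\times K_k$ and by some $J_\ell\times L_\ell$; that is, $\supp(A\centerdot B) = \bigcup_{k,\ell} (I_k\cap J_\ell)\times(K_k\cap L_\ell)$. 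Each $(I_k\cap J_\ell)\times(K_k\cap L_\ell)$ is a rectangle contained in $\supp(A\centerdot B)$, so we obtain a rectangle covering of size $pq$, giving $\rc(A\centerdot B) \le \rc(A)\,\rc(B)$.

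Neither part presents a real obstacle — the content is just that Hadamard multiplication commutes with the summation in a low-rank decomposition and that support of a Hadamard product is the intersection of supports — so the ``easy to verify'' annotation is accurate; the only point to be slightly careful about is that products and intersections of nonnegative rank-one pieces (resp.\ rectangles contained in the respective supports) stay inside $\supp(A\centerdot B)$, which is immediate from nonnegativity (resp.\ from the intersection description of the support).
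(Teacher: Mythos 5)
Your proof is correct, and since the paper offers no argument beyond the remark that these statements are ``easy to verify,'' your approach --- distributing the Hadamard product over the two rank-one (resp.\ rectangle) decompositions and observing that the pairwise Hadamard products of nonnegative rank-one matrices are again nonnegative rank-one (resp.\ that the pairwise intersections of $1$-rectangles are $1$-rectangles of $A\centerdot B$) --- is precisely the standard verification the authors had in mind.
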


Let $G$ be a graph with $n$ vertices. We denote $\mathcal S(G)$ the set of all independent sets of $G$. The slack matrix of $\PEdge{G}$ has two type of rows: those which correspond to inequalities of the form \eqref{eq:P(G)_stable} for $S$ stable in $G$, and those which correspond to nonnegativity inequalities $x_v \geqslant 0$ for $v \in V(G)$. Let $M$ be the support of the submatrix of the slack matrix of $\PEdge{G}$ induced by the rows corresponding to stable sets $S$; let $M^{(1)}$ be the $\mathcal S(G) \times E(G)$-matrix with $M^{(1)}_{S,e} = 1$ if $e\cap S = \emptyset$ and $0$ otherwise; and let $M^{(2)}$ be the $\mathcal S(G) \times E(G)$-matrix with $M^{(2)}_{S,uv} = 1$ if $u$ or $v$ has a neighbor in $S$. Then we have
\begin{equation}\label{eq:factorize_graphslack}
  M = M^{(1)}\centerdot M^{(2)}.
\end{equation}

\begin{lem} \label{eq:cover_M_2}
 For the matrix $M^{(2)}$ defined above we have $\rc(M^{(2)}) \leqslant n$.
\end{lem}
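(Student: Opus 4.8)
The plan is to exhibit an explicit rectangle covering of $M^{(2)}$ with $n$ rectangles, one for each vertex of $G$. Recall that $M^{(2)}$ has rows indexed by independent sets $S \in \mathcal{S}(G)$, columns indexed by edges $uv \in E(G)$, and $M^{(2)}_{S,uv} = 1$ precisely when $u$ or $v$ has a neighbor in $S$. For each vertex $w \in V(G)$, I would define the rectangle $R_w := I_w \times J_w$, where $I_w := \{S \in \mathcal{S}(G) : w \in S\}$ is the set of independent sets containing $w$, and $J_w := \{uv \in E(G) : u \in N(w) \text{ or } v \in N(w)\}$ is the set of edges having at least one endpoint adjacent to $w$.

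First I would check that each $R_w$ is a $1$-rectangle, i.e., $R_w \subseteq \supp(M^{(2)})$: if $S \in I_w$ and $uv \in J_w$, then $w \in S$ and (say) $u \in N(w)$, so $u$ has the neighbor $w$ in $S$, hence $M^{(2)}_{S,uv} = 1$. Second I would check that the $R_w$ cover all of $\supp(M^{(2)})$: suppose $M^{(2)}_{S,uv} = 1$, so without loss of generality $u$ has a neighbor $w \in S$; then $S \in I_w$ (since $w \in S$) and $uv \in J_w$ (since $u \in N(w)$), so $(S,uv) \in R_w$. Since there are exactly $|V(G)| = n$ such rectangles, this gives $\rc(M^{(2)}) \leqslant n$.

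There is essentially no obstacle here — the proof is a short verification once the right rectangles are written down; the only mild subtlety is making sure the definitions of $I_w$ and $J_w$ are chosen so that membership of $w$ in the independent set on the row side matches adjacency to $w$ on the column side, which is exactly what makes both the ``contained in the support'' and the ``covers the support'' directions work simultaneously. I would present it as follows.

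\begin{proof}
For each vertex $w \in V(G)$, define the rectangle $R_w := I_w \times J_w$, where
\begin{equation*}
I_w := \setDef{S \in \mathcal S(G)}{w \in S} \quad\text{and}\quad J_w := \setDef{uv \in E(G)}{u \in N(w) \text{ or } v \in N(w)}\,.
\end{equation*}
Each $R_w$ is a $1$-rectangle: if $S \in I_w$ and $uv \in J_w$, then $w \in S$ and, say, $u \in N(w)$, so $u$ has the neighbor $w$ in $S$ and hence $M^{(2)}_{S,uv} = 1$. Conversely, these $n$ rectangles cover $\supp(M^{(2)})$: if $M^{(2)}_{S,uv} = 1$ then, without loss of generality, $u$ has a neighbor $w \in S$, so $S \in I_w$ and $uv \in J_w$, i.e.\ $(S,uv) \in R_w$. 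Therefore $\{R_w : w \in V(G)\}$ is a rectangle covering of $M^{(2)}$ of size $n$.
\end{proof}
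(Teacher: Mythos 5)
Your proof is correct, and it uses the same overall idea as the paper (one rectangle per vertex of $G$), but with a different family of rectangles. The paper indexes a rectangle by the vertex $v$ playing the role of an \emph{endpoint of the edge}: it takes $I_v$ to be all stable sets containing a neighbor of $v$ and $J_v$ to be all edges incident to $v$. You instead index by the vertex $w$ playing the role of the \emph{witness in the stable set}: your $I_w$ is all stable sets containing $w$, and your $J_w$ is all edges with an endpoint in $N(w)$. These two constructions are in a sense dual to one another; each rectangle in one family need not appear in the other. Both are equally easy to verify and both give exactly $n$ rectangles, so there is no practical advantage either way; but it is worth noticing that they are not the same covering.
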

\begin{proof}
  For each $v\in V(G)$, define a 1-rectangle $I_v\times J_v$. The set $I_v$ consists of all stable sets that contain a neighbor of $v$, and the set $J_v$ consists of all edges incident to $v$. These $n$ rectangles define a rectangle covering of $M^{(2)}$.
\end{proof}

Now, we can prove an upper bound on $\rc(P(G))$.

\begin{prop}\label{ub:edge-alpha}
  Denoting by $\alpha(G)$ the stability number of $G$, the support of the slack matrix of the edge polytope of $G$ admits a rectangle cover of size 
  \begin{equation*}
    O(\alpha(G)^3 n \log n).
  \end{equation*}
\end{prop}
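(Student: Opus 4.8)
The plan is to combine the Hadamard factorization \eqref{eq:factorize_graphslack}, namely $M = M^{(1)} \centerdot M^{(2)}$, with Lemma \ref{lem:centerdot_nnegrank}(b), which gives $\rc(M) \le \rc(M^{(1)}) \, \rc(M^{(2)})$. We already know from Lemma \ref{eq:cover_M_2} that $\rc(M^{(2)}) \le n$, so the heart of the argument is to show $\rc(M^{(1)}) = O(\alpha(G)^3 \log n)$; multiplying the two bounds then yields $O(\alpha(G)^3 n \log n)$ for the submatrix of the slack matrix coming from the stable-set inequalities. The rows coming from the nonnegativity inequalities $x_v \ge 0$ contribute only $n$ more rectangles (one per vertex, as in Lemma \ref{eq:cover_M_2}), which is absorbed into the bound, so it suffices to treat $M$.

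For the key step I would analyze $M^{(1)}$, the $\mathcal{S}(G) \times E(G)$ matrix with $M^{(1)}_{S,e} = 1$ iff $e \cap S = \varnothing$. First observe that the columns of $M^{(1)}$ are indexed by edges, i.e.\ by $2$-element subsets $\{u,v\}$ of $V(G)$, so $M^{(1)}$ is a submatrix (obtained by restricting the columns to the pairs that happen to be edges of $G$) of the disjointness matrix whose rows are indexed by \emph{all} independent sets of $G$ and whose columns are indexed by all $2$-subsets of $[n]$; and since every independent set has size at most $\alpha(G)$, the rows range over subsets of $[n]$ of size at most $\alpha(G)$. This is exactly the setting of Lemma \ref{lem:cov_gen} with $k := \alpha(G)$ and $\ell := 2$: that lemma gives a rectangle cover of the full disjointness matrix of size $O\bigl((k+\ell)\, e^{\ell} (1 + k/\ell)^{\ell} \log n\bigr)$, and with $\ell = 2$ this is $O\bigl(k \cdot (1 + k/2)^2 \log n\bigr) = O(k^3 \log n) = O(\alpha(G)^3 \log n)$. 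Restricting the cover to the columns indexed by actual edges of $G$ only removes columns, so $\rc(M^{(1)}) = O(\alpha(G)^3 \log n)$ as well.

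Putting the pieces together: $\rc(M) \le \rc(M^{(1)})\,\rc(M^{(2)}) = O(\alpha(G)^3 \log n) \cdot n = O(\alpha(G)^3 n \log n)$, and adding the $n$ rectangles needed to cover the nonnegativity rows does not change the asymptotics. I do not expect a genuine obstacle here — the ingredients are all in place — but the one point requiring a little care is the reduction of $M^{(1)}$ to an instance of Lemma \ref{lem:cov_gen}: one must check that the hypothesis $\ell \le k \le n$ is satisfied (true as soon as $\alpha(G) \ge 2$, and the case $\alpha(G) = 1$, i.e.\ $G$ complete, is trivial since then $\mathcal{S}(G)$ consists only of singletons and the empty set), and that passing to the column-submatrix indexed by $E(G)$ genuinely preserves being covered by the same rectangles intersected with the new column set.
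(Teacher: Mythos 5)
Your proposal follows the paper's proof exactly: apply Lemma~\ref{lem:cov_gen} with $k := \alpha(G)$ and $\ell := 2$ to get $\rc(M^{(1)}) = O(\alpha(G)^3 \log n)$, then combine with Lemma~\ref{lem:centerdot_nnegrank}(b) and Lemma~\ref{eq:cover_M_2}. Your extra remarks about the nonnegativity rows, the case $\alpha(G)=1$, and column restriction are sound but the paper leaves them implicit.
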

{
\begin{proof}
Lemma \ref{lem:cov_gen} for $\ell := 2$ and $k := \alpha(G)$ implies $\rc(M^{(1)}) = O(\alpha(G)^3 \log n)$. The result follows by combining Lemmas \ref{lem:centerdot_nnegrank} and \ref{eq:cover_M_2}.
\end{proof}}

Combining Tur\'an's Theorem~\cite{Turan41}, which states that $|E(G)| \geqslant \frac{|V(G)|^2}{2 \alpha(G)}$, with Proposition \ref{ub:edge-alpha}, we obtain the final result of this section: there exists a nontrivial class of graphs for which the rectangle covering number cannot prove a strong $\Omega(|E(G)|)$ lower bound on the extension complexity.

\begin{cor}
  The support of the slack matrix of the edge polytope of $G$ admits a rectangle cover of size {$o(|E(G)|)$} if
  \begin{equation*}
    \alpha(G) = o\!\left( \bigl(\tfrac{n}{\log n}\bigr)^{1/4} \right).
    \vspace{-1em}
  \end{equation*}
  \qed
\end{cor}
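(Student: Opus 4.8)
The plan is to obtain the statement by simply chaining the two facts quoted immediately before it: the upper bound of Proposition~\ref{ub:edge-alpha}, namely that the support of the slack matrix of $\PEdge{G}$ admits a rectangle cover of size $O(\alpha(G)^3 n \log n)$, and Tur\'an's Theorem in the form $|E(G)| \geqslant n^2/(2\alpha(G))$. Recall that $|E(G)|$ is exactly the number of vertices of $\PEdge{G}$, so ``$o(|E(G)|)$'' is the natural yardstick against which to measure the rectangle cover.

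First I would fix, by Proposition~\ref{ub:edge-alpha}, a rectangle cover of the support of the slack matrix of $\PEdge{G}$ consisting of $r = O(\alpha(G)^3 n \log n)$ rectangles. Next I would apply Tur\'an's bound $|E(G)| \geqslant n^2/(2\alpha(G))$ to estimate the ratio
\begin{equation*}
  \frac{r}{|E(G)|} = O\!\left(\frac{\alpha(G)^3 n \log n}{n^2/\alpha(G)}\right) = O\!\left(\frac{\alpha(G)^4 \log n}{n}\right).
\end{equation*}
Finally, under the hypothesis $\alpha(G) = o\bigl((n/\log n)^{1/4}\bigr)$ we have $\alpha(G)^4 = o(n/\log n)$, hence $\alpha(G)^4 \log n / n = o(1)$, so $r = o(|E(G)|)$, which is the claim.

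There is essentially no obstacle here: the argument is a one-line asymptotic computation built on two results already in hand. The only point worth a moment's care is the direction in which Tur\'an's inequality is being used --- it says that a graph with small independence number must have many edges, so a small value of $\alpha(G)$ forces $|E(G)|$ to be large, and it is precisely this abundance of edges (vertices of $\PEdge{G}$) against the comparatively modest $O(\alpha(G)^3 n \log n)$ rectangle cover that makes the ratio vanish.
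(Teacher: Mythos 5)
Your proof is correct and is precisely the one-line computation the paper intends (the corollary carries a \qed precisely because it follows by substituting Tur\'an's bound $|E(G)| \geqslant n^2/(2\alpha(G))$ into Proposition~\ref{ub:edge-alpha}). No further comment is needed.
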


\section{Rectangle Covering Number and Communication Complexity}\label{sec:CC}

The aim of \emph{communication complexity} is to quantify the amount of communication necessary to evaluate a function whose input is distributed among several players. Since its introduction by A.\ Yao in 1979~\cite{Yao1979}, it became a part of complexity theory. It was successfully applied, e.g., in the contexts of VLSI design, circuit lower bounds and lower bounds on data-structures~\cite{KushilevitzNisan97}. 

Two players Alice and Bob are asked to evaluate a Boolean function $f : A \times B \to \{0,1\}$ at a given input pair $(a,b) \in A \times B$, where $A$ and $B$ are finite sets. Alice receives the input $a \in A$ and Bob receives the input $b \in B$. Both players know the function $f$ to be evaluated, but none of the players initially has any information about the input of the other player. They have to cooperate in order to compute $f(a,b)$. Both players can perform any kind of computation. It is only the \emph{amount of communication} between them which is limited. In a deterministic protocol, Alice and Bob will exchange bits until one of them is able to correctly compute $f(a,b)$. The \emph{deterministic communication complexity} of the function $f$ is the minimum worst-case number of bits that Alice and Bob have to exchange in order to evaluate $f$ at any given input pair $(a,b)$. 

The function $f$ can be encoded via its \emph{communication matrix} $M = M(f)$ that has $M_{i,j} = f(a_i,b_j)$, where $a_i$ is the $i$-th element of $A$ and $b_j$ is the $j$-th element of $B$. It is known~\cite{Yannakakis91} that a deterministic protocol of complexity $k$ for $f$ yields a decomposition of $M$ as a sum of at most $2^k$ rank-one 0/1-matrices, implying $\nnegrank(M) \leqslant 2^k$. In order to apply this for constructing extensions, via Theorem~\ref{thm:extComplNonnegRk}, it is necessary for the slack matrix to be binary. 

Let $G$ be a graph. The \emph{stable set polytope} $\PStab{G}$ is the 0/1-polytope in $\R^{V(G)}$ whose vertices are the characteristic vectors of stable sets of $G$. Because the maximum stable set problem is NP-hard, it is unlikely that a complete description of $\PStab{G}$ will be found for all graphs $G$.  However, some interesting classes of valid inequalities for $\PStab{G}$ are known. For instance, for every clique $K$ of $G$, the \emph{clique inequality}
%
$\sum_{v \in K} x_v \leqslant 1$
%
is valid for $\PStab{G}$. By collecting the slack covectors of each of these inequalities w.r.t.\ the vertices of $\PStab{G}$, we obtain a 0/1-matrix $M = M(G)$. This matrix is the communication matrix of the so-called \emph{clique vs.\ stable set problem} (also known as the \emph{clique vs.\ independent set problem}). Yannakakis~\cite{Yannakakis91} found a $O(\log^2 n)$ complexity deterministic protocol for this communication problem. When the clique inequalities together with the non-negativity inequalities form a complete linear description of $\PStab{G}$, that is, when $G$ is perfect~\cite{Chvatal75}, one obtains an extension of size $2^{O(\log^2 n)} = n^{O(\log n)}$ for $\PStab{G}$, where $n$ denotes the number of vertices of $G$. For general graphs, $M$ is not a slack matrix of $\PStab{G}$ but a proper submatrix. Hence $\nnegrank(M)$ only gives a lower bound on the extension complexity of $\PStab{G}$.

Consider again a Boolean function $f : A \times B \to \{0,1\}$ and a deterministic protocol for computing $f$. Fix an input pair $(a,b)$. Given the transcript of the protocol on input pair $(a,b)$, each the players can verify independently of the other player if the part of the transcript that he/she is responsible for is correct. The transcript is globally correct if and only if it is correct for \emph{both} of the players. Thus one way to persuade each player that, say, $f(a,b) = 1$ is to give them the transcript for $(a,b)$. If the complexity of the given deterministic protocol is $k$, then the players can decide whether $f(a,b) = 1$ or not based on a binary vector of size $k$ (the transcript). This is an example of a nondeterministic protocol. We give a formal definition in the next paragraph.

In a \emph{nondeterministic protocol}, both players are given, in addition to their own input $a$ and $b$, a proof that is in the form of a binary vector $\pi$ of size $k$ (the same for both players). The players cannot communicate with each other. Each of them checks the proof $\pi$, independently of the other, and outputs one bit, denoted by $V_A(a,\pi)$ or $V_B(b,\pi)$. The result of the protocol is the \emph{AND} of these two bits, that is, $V(a,b,\pi) := V_A(a,\pi) \wedge V_B(b,\pi)$. The aim of the protocol is to prove that $f(a,b) = 1$. In order to be correct, the protocol should be \emph{sound} in the sense that if $f(a,b) = 0$ then $V(a,b,\pi) = 0$ for all proofs $\pi$, and \emph{complete} in the sense that if $f(a,b) = 1$ then $V(a,b,\pi) = 1$ for some proof $\pi$. The \emph{nondeterministic complexity of $f$} is the minimum proof-length $k$ in a nondeterministic protocol for $f$. Letting $M = M(f)$ denote the communication matrix of $f$, it is well-known that the nondeterministic complexity of $f$ is $\lceil \log(\rc(M)) \rceil$, see e.g.~\cite{KushilevitzNisan97}.

While Yannakakis proved that the deterministic communication complexity of the clique vs.\ stable set problem is $O(\log^2 n)$, the exact deterministic and nondeterministic communication complexities of the clique vs.\ stable set problem are unknown. The best result so far is a lower bound of $\frac{6}{5} \log n - O(1)$ on the nondeterministic complexity obtained by Huang and Sudakov~\cite{HuangSudakov2010}. They also made the following graph-theoretical conjecture that, if true, would improve this bound to $\Omega(\log^2 n)$. 

Recall that the \emph{biclique partition number} of a graph $G$ is the minimum number of bicliques (that is, complete bipartite graphs) needed to partition the
edge set of $G$. Recall also that the \emph{chromatic number} $\chi(G)$ of $G$ is the minimum number of parts in a partition of the vertex set of $G$ into
stable sets.
\begin{conj}[Huang and Sudakov~\cite{HuangSudakov2010}]
  For each integer $k > 0$, there exists a graph $G$ with biclique partition number $k$ and chromatic number at least $2^{c \log^2 k}$, for some constant $c >
  0$.
\end{conj}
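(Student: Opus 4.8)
A natural plan to attack this conjecture splits into two largely independent tasks: constructing a graph family $\{G_k\}$ for which one can \emph{certify} that the biclique partition number is at most $k$, and simultaneously forcing $\chi(G_k)$ to grow quasi-polynomially in $k$. For the first task, the only handle on the biclique partition number from above is to exhibit an explicit partition, while from below the classical Graham--Pollak/Witsenhausen eigenvalue bound gives that the biclique partition number is at least $\max\{n_+(A_G), n_-(A_G)\}$, where $n_{\pm}$ count the positive and negative eigenvalues of the adjacency matrix (each biclique in a partition contributes a rank-$2$ symmetric summand with one positive and one negative eigenvalue), so in particular $\rk(A_G) \le 2\cdot(\text{biclique partition number})$. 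I would therefore restrict attention from the outset to graphs whose adjacency matrix has rank $O(k)$ --- equivalently, by a classical result of Kotlov and Lov\'asz, graphs on roughly $2^{\Theta(k)}$ vertices after identifying twins --- and inside that class look for a structured subfamily in which a biclique partition of size $O(k)$ is visible by construction, the rank bound alone not certifying this. For the second task the simplest sufficient condition is $\chi(G) \ge |V(G)|/\alpha(G)$, so the content is to keep the independence number down to $|V(G)|\cdot 2^{-\Omega(\log^2 k)}$ while keeping the spectrum concentrated on $O(k)$ eigenvalues.

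The most promising shape for the construction is recursive amplification. Starting from a seed graph $G_0$ realising a modest (say polynomial) gap between chromatic number and biclique partition number --- such gaps are known, e.g.\ the Huang--Sudakov counterexamples~\cite{HuangSudakov2010} to the Alon--Saks--Seymour conjecture --- I would look for an operation $\Phi$ on graphs such that the biclique partition number of $\Phi(G)$ is at most (roughly) twice that of $G$, while $\chi(\Phi(G))$ is at least the biclique partition number of $G$ times $\chi(G)$. Iterating $\Phi$ about $\log k$ times then keeps the biclique partition number at $O(k)$ while multiplying the chromatic number each round by a factor that itself doubles, so after $\log k$ rounds it has grown by $\prod_i 2^{\Theta(i)} = 2^{\Theta(\log^2 k)}$, which is exactly the target. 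Throughout, I would track the eigenvalue multiplicities of the successive adjacency matrices explicitly, since that is what controls the lower bound on the biclique partition number and must \emph{not} blow up, and I would use a fractional-relaxation or blow-up argument to produce the \emph{multiplicative} (rather than merely additive) jump in chromatic number at each step.

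The main obstacle I anticipate is precisely the tension built into that last sentence: every off-the-shelf graph operation that genuinely multiplies the chromatic number --- lexicographic, strong, or substitution products --- multiplies the number of nonzero eigenvalues of the adjacency matrix by more than a constant, so it pushes the biclique partition number from $O(k)$ up to $\mathrm{poly}(k)$; conversely, operations that keep the biclique partition number additive (disjoint union, join) make $\chi$ only additive as well, yielding a hopeless $\chi = O(k)$. Note that an operation that were additive in biclique partition number and multiplicative in chromatic number would, iterated from an already polynomial gap, \emph{overshoot} the conjecture and yield a $2^{\Omega(k)}$ gap, which strongly suggests that no such operation exists and that a genuinely new gadget is required --- one in which a single biclique added to the partition is ``responsible for'' a multiplicative jump in chromatic number --- together with a non-trivial (spectral or fractional) argument that the resulting partition is optimal up to constants. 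A sensible intermediate goal, itself open, would be to settle for \emph{any} super-polynomial separation, i.e.\ a family with biclique partition number $k$ and chromatic number $k^{\omega(1)}$; even this would already improve the Huang--Sudakov $\Omega(\log n)$ lower bound on the nondeterministic complexity of the clique vs.\ stable set problem to $\omega(\log n)$.
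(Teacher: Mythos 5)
The statement you were given is a \emph{conjecture}, stated in the paper precisely as an open problem attributed to Huang and Sudakov~\cite{HuangSudakov2010}; the paper offers no proof. It is used only as motivation: the authors observe that the known Huang--Sudakov lower bound already gives an $\Omega(n^{6/5})$ worst-case bound on the rectangle covering number of stable-set polytopes of $n$-vertex graphs, and that the conjecture, if true, would lift this to $n^{\Omega(\log n)}$. So there is nothing in the paper to compare your attempt against, and no fault to find in your not proving it.

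What you wrote is, appropriately, a strategy discussion rather than a proof, and you are explicit that nothing is actually established --- you close by noting that even a super-polynomial separation (weaker than the conjectured quasi-polynomial one) is ``itself open.'' As a roadmap it is technically sound: the Graham--Pollak/Witsenhausen eigenvalue-signature bound is indeed the standard, essentially the only general, lower-bound tool for the biclique partition number, and the tension you isolate --- that the usual $\chi$-multiplying operations (lexicographic, strong, and substitution products) blow up the number of nonzero eigenvalues, while spectrum-friendly operations (disjoint union, join) only give additive jumps in $\chi$ --- is exactly why the problem is hard. One caveat: your inference that an additive-in-partition-number, multiplicative-in-$\chi$ operation ``does not exist'' because iterating it would overshoot the conjecture is heuristic, since an operation could behave well only for a bounded number of rounds before degrading; but you already flag it as suggestive rather than conclusive. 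In short, there is no gap to identify in the sense the task intends: this is an open conjecture in the paper, and you have correctly treated it as such.
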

This conjecture would settle the communication complexity of the clique vs.\ stable set problem. In terms of extensions, the result of Huang and Sudakov implies a $\Omega(n^{6/5})$ lower bound on the worst-case rectangle covering number of stable set polytopes of graphs with $n$ vertices. Moreover, if true, the Huang-Sudakov conjecture would imply a $n^{\Omega(\log n)}$ lower bound.

\section{Lower Bounds on the Rectangle Covering Number}\label{sec:lb}

In this section, we give lower bounds on the rectangle covering number, and apply them to prove results about the \mnfx{} of polytopes. Our general strategy is to focus on a specific submatrix of the slack matrix and then use simple structural properties of the support of the submatrix. Although much of the underlying geometry is lost in the process, we can still obtain interesting bounds, which are in some cases tight. Also, we compare the bounds to each other whenever possible and useful. We remark that most of the bounds discussed here are known bounds on the nondeterministic communication complexity of Boolean functions. Nevertheless, they were never studied in the context of polyhedral combinatorics.

\subsection{Dimension and Face Counting}\label{ssec:lb:dim-and-facenr}

\begin{lem}\label{lem:lb:log-distinct-rows}
  Suppose the 0/1-matrix $M$ has $h$ distinct rows. Then $\rc(M) \geqslant \log h$. 
\end{lem}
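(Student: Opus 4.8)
The plan is to use the characterization of $\rc(M)$ in terms of Boolean factorizations, or equivalently in terms of covering the support of $M$ by $1$-rectangles. Suppose $\rc(M) = r$, so there is a rectangle covering $R_1, \dots, R_r$ of $\supp(M)$ with $R_k = I_k \times J_k$. For each row index $i \in [m]$, associate the set $\varphi(i) := \{k \in [r] : i \in I_k\} \subseteq [r]$; think of $\varphi(i)$ as recording which rectangles ``use'' row $i$.

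The key step is to show that $\varphi$ separates rows that are distinct in $M$: if $M_{i,\star} \neq M_{i',\star}$, then $\varphi(i) \neq \varphi(i')$. To see this, pick a column $j$ where the two rows differ, say $M_{i,j} = 1$ and $M_{i',j} = 0$ (swapping the roles of $i, i'$ if necessary). Since $(i,j) \in \supp(M)$, some rectangle $R_k = I_k \times J_k$ covers it, so $i \in I_k$ and $j \in J_k$; thus $k \in \varphi(i)$. On the other hand, every rectangle in the covering is contained in $\supp(M)$ (this was emphasized right after the definition of rectangle covering), and $(i',j) \notin \supp(M)$, so $(i',j) \notin R_k$; since $j \in J_k$, this forces $i' \notin I_k$, i.e.\ $k \notin \varphi(i')$. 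Hence $\varphi(i) \neq \varphi(i')$.

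Therefore $\varphi$ induces an injection from the set of distinct rows of $M$ into $2^{[r]}$, which has cardinality $2^r$. Since $M$ has $h$ distinct rows, we get $h \leqslant 2^r$, i.e.\ $r \geqslant \log h$, as claimed.

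I do not expect a genuine obstacle here; the only point requiring a little care is the direction of the argument in the key step — one must use that rectangles lie inside $\supp(M)$ (not merely that they cover it) to rule out $i' \in I_k$. An essentially equivalent route would be to invoke Lemma~\ref{lem:Boolean_factorization_iff_embedding} directly: a rank-$r$ Boolean factorization $M = TU$ gives a map $f : [m] \to 2^{[r]}$ (the complements of the rows of $T$) with $M_{ij} = 0 \iff g(j) \subseteq f(i)$, and two rows $M_{i,\star}, M_{i',\star}$ with $f(i) = f(i')$ would satisfy $M_{ij} = 0 \iff M_{i'j} = 0$ for all $j$, hence be equal; so $f$ has at least $h$ distinct values and $2^r \geqslant h$. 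Either phrasing yields the bound.
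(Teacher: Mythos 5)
Your proof is correct and follows essentially the same approach as the paper's: associate to each row index $i$ the set of rectangles whose row set contains $i$, and use the fact that rectangles lie inside $\supp(M)$ to show this map separates distinct rows, so that $2^r \geqslant h$. The paper phrases it as a pigeonhole contradiction whereas you argue injectivity directly, but the core idea and even the key subtlety you flag (rectangles being contained in, not merely covering, the support) are the same.
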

\begin{proof}
  Since deleting rows does not increase the rectangle covering number, we may assume that $M$ has exactly $h$ rows, which are all distinct.
    
  We proceed by contradiction, i.e., we assume that there is a collection $\mathcal{R}$ of rectangles that covers $M$ and contains less than $\log h$ rectangles.  Each such rectangle is of the form $I \times J$, where $I$ is a set of row indices and $J$ is a set of column indices such that $M_{i,j}=1$ whenever $i\in I$ and $j \in J$.  
  
  For every row-index $i\in\ints{h}$, let $\mathcal{R}_i$ denote the set of all rectangles $I\times J$ from $\mathcal{R}$ such that $i \in I$.  Since the number of rectangles in $\mathcal{R}$ is less than $\log h$, there are two distinct row-indices $i$ and $i'$ such that $\mathcal{R}_{i}$ and $\mathcal{R}_{i'}$ are equal. Because the rectangles in $\mathcal{R}$ form a covering of the matrix $M$, it follows that the $i$-th row and the $i'$-th row of $M$ have the same set of one-entries and are thus equal, a contradiction.
\end{proof}

Obviously, the same statement holds with ``rows'' replaced by ``columns''. It is well-known~\cite{Goemans09} that the binary logarithm of the number of faces of a polytope $P$ is a lower bound for the number of facets in an extension of $P$.  This number is in fact also a lower bound on the rectangle covering number of $P$. Furthermore, the following chain of inequalities holds.

\begin{prop}\label{prop:lb:facenr}
  For a full-dimensional polytope $P$ in $\mathbb{R}^d$ with $f$ faces, we have
  \begin{equation*}
    d + 1 \leqslant \log f \leqslant \rc(P).
  \end{equation*}
\end{prop}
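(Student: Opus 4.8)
The statement has two inequalities to establish: $d+1 \le \log f$ and $\log f \le \rc(P)$. The second inequality is the easy one: the facet vs.\ vertex non-incidence matrix $M$ of $P$ has, by Lemma~\ref{lem:rc_independent_of_S}, the same rectangle covering number as any non-incidence matrix, and in particular as a non-incidence matrix whose rows are indexed by \emph{all} faces of $P$. Distinct faces $F \ne G$ yield distinct rows of such a matrix, since a face is determined by the set of facets containing it, and non-containment in a facet is exactly where the $1$-entries sit. Hence that matrix has $f$ distinct rows, and Lemma~\ref{lem:lb:log-distinct-rows} gives $\rc(P) \ge \log f$. (One should be mildly careful that we are counting \emph{all} faces including $\varnothing$ and $P$; the row for $P$ is identically zero and the row for $\varnothing$ is all ones, but these are still distinct rows from each other and from the rest, so the count is unaffected.)

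For the first inequality $d+1 \le \log f$, i.e.\ $f \ge 2^{d+1}$, the plan is to exhibit $2^{d+1}$ distinct faces of a full-dimensional polytope $P \subseteq \R^d$. The cleanest route is to induct on $d$. Pick any facet $F$ of $P$; it is a full-dimensional polytope in $\R^{d-1}$ (after passing to its affine hull), so by induction it has at least $2^d$ faces. Every face of $F$ is a face of $P$. Now I need $2^d$ \emph{more} faces of $P$ not among these. For that, pick a vertex $v$ of $P$ not lying on $F$ (it exists since $P$ is full-dimensional and $F$ is a proper face, so $F \ne P$, and $P$ is the convex hull of its vertices). The faces of $P$ containing $v$ are not faces of $F$, and I claim there are at least $2^d$ of them: the faces of $P$ containing $v$ form the face lattice of the \emph{vertex figure} of $P$ at $v$ (equivalently, faces through $v$ correspond to faces of a $(d-1)$-dimensional polytope), so again by induction there are at least $2^d$ of them. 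Since a face through $v$ cannot equal a face of $F$ (the former contains $v \notin F$, the latter is contained in $F$), the two families are disjoint, giving $f \ge 2^d + 2^d = 2^{d+1}$. The base case $d = 0$ is a point with $2$ faces ($\varnothing$ and the point itself), matching $2^{0+1} = 2$.

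\textbf{Main obstacle.} The delicate point is the inductive step for counting faces: I must make sure the ``vertex figure'' argument is stated correctly, namely that the faces of $P$ strictly containing a fixed vertex $v$ (equivalently, all faces containing $v$, other than $\{v\}$ and $\varnothing$ handled separately) are in inclusion-preserving bijection with the nonempty faces of a $(d-1)$-dimensional polytope, so that the induction hypothesis applies and yields $\ge 2^d$ of them. An alternative that sidesteps vertex figures: use the well-known fact (which can itself be proved by a short induction) that a $d$-polytope has at least $\binom{d+1}{i+1}$ faces of dimension $i$ for each $i \in \{-1,0,\dots,d\}$ — these are minimized by the simplex — and then $f \ge \sum_{i=-1}^{d}\binom{d+1}{i+1} = \sum_{j=0}^{d+1}\binom{d+1}{j} = 2^{d+1}$. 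Either way, the combinatorial core is the simplex-minimality of the total face count, and once that is in hand the proposition follows by concatenating it with the $\rc$ bound from Lemma~\ref{lem:lb:log-distinct-rows}.
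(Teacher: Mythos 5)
Your proof is correct and takes essentially the same approach as the paper: Lemma~\ref{lem:lb:log-distinct-rows} applied to a matrix with one distinct row per face gives $\log f \leqslant \rc(P)$, and $d+1 \leqslant \log f$ follows by induction on $d$, which the paper states without details and you supply (either via vertex figures plus a facet, or via the simplex-minimality of face numbers). One minor slip in the justification: in your rows-for-all-faces, columns-for-vertices matrix, two rows are distinct because a face is determined by the set of \emph{vertices} it contains, not by the set of \emph{facets} containing it --- but the conclusion is unaffected.
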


\begin{proof} 
The first inequality follows by induction on $d$. For the second inequality, consider a slack matrix for $P$ with $f$ rows with the $i$-th row being the slack covector of any inequality defining the $i$-th face of $P$. The support matrix $M$ of this slack matrix has $f$ distinct rows. The conclusion now follows from
Lemma~\ref{lem:lb:log-distinct-rows}.
\end{proof}

\subsection{The Rectangle Graph}\label{ssec:lb:G(M)} Consider a real matrix $M$. The \emph{\thegraph} of $M$, denoted by $\theG{M}$, has the pairs $(i,j)$ such that $M_{i,j} \neq 0$ as vertices. Two vertices $(i,j)$ and $(k,\ell)$ of $\theG{M}$ are adjacent if no rectangle contained in $\supp(M)$ can be used to cover both $(i,j)$ and $(k,\ell)$. This last condition is equivalent to asking $M_{i,\ell} = 0$ or $M_{k,j} = 0$. The following result will allow us to interpret most lower bounds on $\rc(M)$ in terms of graphs. We omit the proof because it is straightforward. 

\begin{lem}
\label{lem:rc_chi}
For every real matrix $M$, we have $\rc(M) = \chi(\theG{M})$. \qed
\end{lem}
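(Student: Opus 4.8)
The plan is to verify the two directions of the equality $\rc(M) = \chi(\theG{M})$ directly from the definitions, since the statement is essentially a dictionary translation between "minimum number of $1$-rectangles covering $\supp(M)$'' and "minimum number of colors needed to properly color $\theG{M}$''.

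First, I would show $\chi(\theG{M}) \le \rc(M)$. Take an optimal rectangle covering $\mathcal{R} = \{R_1,\ldots,R_r\}$ of $\supp(M)$ with $r = \rc(M)$, where each $R_k = I_k \times J_k$ is a $1$-rectangle (so $M_{i,j} \ne 0$ for all $(i,j) \in R_k$). Define a coloring $c$ of the vertices of $\theG{M}$ by letting $c(i,j)$ be the index of some rectangle $R_k \ni (i,j)$; this is well-defined because the rectangles cover $\supp(M)$. To see that $c$ is proper, suppose $c(i,j) = c(k,\ell) = k_0$, i.e. both $(i,j)$ and $(k,\ell)$ lie in the single rectangle $R_{k_0} = I_{k_0}\times J_{k_0}$. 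Then $i,k \in I_{k_0}$ and $j,\ell \in J_{k_0}$, so $(i,\ell)$ and $(k,j)$ also lie in $R_{k_0}$, whence $M_{i,\ell} \ne 0$ and $M_{k,j} \ne 0$; by definition of $\theG{M}$ this means $(i,j)$ and $(k,\ell)$ are \emph{not} adjacent. Hence $c$ uses at most $r$ colors, giving $\chi(\theG{M}) \le r = \rc(M)$.

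Conversely, for $\rc(M) \le \chi(\theG{M})$, take a proper coloring of $\theG{M}$ with $\chi(\theG{M})$ color classes $C_1,\ldots,C_t$; each $C_s$ is an independent set of $\theG{M}$. The key observation is that an independent set $C$ in $\theG{M}$ is precisely a set of positions no two of which are "mutually blocking'', and one checks that such a set is always contained in a single $1$-rectangle: let $I := \{\,i : (i,j)\in C \text{ for some } j\,\}$ and $J := \{\,j : (i,j)\in C \text{ for some } i\,\}$. For any $i\in I$ and $j \in J$ there are witnesses $(i,\ell),(k,j) \in C$; since these are non-adjacent in $\theG{M}$ we must have $M_{i,j}\ne 0$ (and $M_{k,\ell}\ne 0$), so $I \times J \subseteq \supp(M)$ is a $1$-rectangle containing $C$. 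Replacing each color class $C_s$ by such a rectangle yields a covering of $\supp(M)$ by $\chi(\theG{M})$ rectangles, so $\rc(M) \le \chi(\theG{M})$. Combining the two inequalities gives the claim.

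There is essentially no obstacle here — the only thing to be careful about is the precise definition of adjacency in $\theG{M}$ (adjacent $\Leftrightarrow$ $M_{i,\ell}=0$ or $M_{k,j}=0$, i.e. \emph{non}-adjacent $\Leftrightarrow$ $M_{i,\ell}\ne 0$ \emph{and} $M_{k,j}\ne 0$), and the symmetric bookkeeping of row- and column-index sets of a rectangle versus a color class. This is exactly why the authors call the proof straightforward and omit it; the two paragraphs above record it for completeness.
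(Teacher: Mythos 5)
Your proof is correct, and it is precisely the ``straightforward'' argument the authors had in mind when they chose to omit the proof: rectangles are independent sets of $\theG{M}$, and conversely any independent set sits inside the $1$-rectangle spanned by its row- and column-index projections, so rectangle covers and proper colorings correspond. Both directions are handled cleanly, including the small subtlety that the two witnesses $(i,\ell)$ and $(k,j)$ may coincide, which you implicitly cover since $C\subseteq\supp(M)$.
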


\subsection{Clique Number and Fooling Sets}\label{ssec:lb:clique}

The \emph{clique number} $\omega(\theG{M})$ of the \thegraph{} of a real matrix $M$ is a lower bound to its chromatic number and hence, by Lemma \ref{lem:rc_chi}, a lower bound on the rectangle covering number of $M$. A clique in $\theG{M}$ corresponds to what is known as a \emph{fooling set:} A selection of non-zeros in the matrix such that no two of them induce a rectangle contained in $\supp(M)$.  As is customary, we denote the clique number of a graph $G$ by $\omega(G)$. Analogously, we denote the clique number of $\theG{M}$ by $\omega(M)$. We can also define the clique number $\omega(P)$ of a polytope $P$ as $\omega(M)$, for any non-incidence matrix $M$ of $P$. The next proposition shows that $\omega(P)$ is well-defined.

\begin{prop}
The clique number $\omega(M)$ is independent of the non-incidence matrix $M$ chosen for the polytope $P$.
\end{prop}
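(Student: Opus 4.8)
The plan is to show that the clique number of the rectangle graph is preserved under the two elementary operations that link any two non-incidence matrices of $P$: adding or deleting rows/columns whose support is the union of supports of existing rows/columns. By Lemma~\ref{lem:rc_independent_of_S}, any two non-incidence matrices of $P$ are related, up to permutation and repetition of rows and columns, by passing through the facet vs.\ vertex non-incidence matrix $M^{\star}$; so it suffices to prove that $\omega(M) = \omega(M^{\star})$ whenever $M$ is obtained from $M^{\star}$ by adjoining rows indexed by arbitrary faces and columns indexed by arbitrary nonempty faces (the entries being determined by non-incidence).

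The first step is to observe that adding a row $r$ whose support is the union of the supports of some existing rows cannot increase the clique number. Suppose $C$ is a clique in $\theG{M}$ that uses a vertex $(r,j)$ in the new row. Since $M_{r,j} = 1$, there is some old row $i$ with $M_{i,j} = 1$, i.e.\ $(i,j)$ is a vertex of $\theG{M}$. I claim $(C \setminus \{(r,j)\}) \cup \{(i,j)\}$ is again a clique. For any other clique vertex $(k,\ell)$ we know $(r,j)$ and $(k,\ell)$ are adjacent, so $M_{r,\ell} = 0$ or $M_{k,j} = 0$. In the first case, since $\supp(M_{r,\star})$ is a union of supports of old rows and in particular contains $\supp(M_{i,\star})$, we get $M_{i,\ell} = 0$, hence $(i,j)$ and $(k,\ell)$ are adjacent. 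In the second case $M_{k,j} = 0$ directly gives adjacency. Doing this replacement for each new-row vertex of $C$ (vertices in distinct new rows land in distinct old rows only if the columns differ; if two replaced vertices collide we just get a smaller clique, which still suffices for the inequality $\omega(M) \le \omega(M^{\star})$ after iterating) shows $\omega(M) \le \omega(M^{\star})$. The symmetric argument handles columns, using the dual description of adjacency. Conversely $\omega(M) \ge \omega(M^{\star})$ is trivial since $M^{\star}$ is (up to repeated rows/columns) a submatrix of $M$ and the rectangle graph of a submatrix is an induced subgraph.

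The main obstacle is bookkeeping rather than mathematics: one must be careful that after the replacement the resulting set of positions is genuinely a set of vertices of $\theG{M}$ (each replaced entry is a $1$-entry by construction) and that collisions only ever shrink the clique, so that taking a maximum clique of $M$ and pushing it into the facet-vertex submatrix never loses more than it should — and in fact, starting from a maximum clique of $M$, the image is a clique of $M^{\star}$ of the same size because two clique vertices in different rows of $M$ get sent to the same row of $M^{\star}$ only if they share a column, which is impossible in a clique (two positions $(i,j),(i,j')$ in the same row are never adjacent, as $M_{i,j} = M_{i,j'} = 1$). Hence the map is injective on any clique and $\omega(M) = \omega(M^{\star})$. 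Since $M^{\star}$ embeds as an induced subgraph (modulo duplicates, which do not affect $\omega$) into every non-incidence matrix of $P$, the common value $\omega(M^{\star})$ equals $\omega(M)$ for all of them, so $\omega(P)$ is well-defined. $\qed$
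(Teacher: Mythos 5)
Your argument is at heart the same as the paper's: given a fooling set in an arbitrary non-incidence matrix, replace each entry that is not a facet--vertex pair by one that is, using a facet containing the given face but not the given subface (in your matrix language, an old row $i$ with $\supp(M_{i,\star}) \subseteq \supp(M_{r,\star})$ and $M_{i,j}=1$). The paper runs this as a maximality argument --- pick a fooling set maximizing the number of facet rows plus vertex columns and derive a contradiction --- whereas you iterate the replacement one position at a time; both rely on the same geometric choice of replacement.

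The place where your write-up is shaky is the collision discussion. The assertion that two clique vertices in different rows land in the same old row ``only if they share a column'' is false as stated: two new-row vertices $(r,j)$ and $(r',j')$ with $j \neq j'$ could a priori both be sent to row $i$. What actually prevents this is the adjacency condition: since $(r,j)$ and $(r',j')$ are adjacent, either $M_{r,j'}=0$, which together with $\supp(M_{i,\star}) \subseteq \supp(M_{r,\star})$ forces $M_{i,j'}=0$ and so $i' \neq i$, or symmetrically $M_{r',j}=0$ forces $M_{i',j}=0$ and again $i' \neq i$. More economically, because each single replacement preserves adjacency (as you correctly verify), the images $(i,j)$ and $(i',j')$ remain adjacent and hence automatically lie in different rows and columns. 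So the ``smaller clique after iterating'' fallback is never invoked and should be dropped; with that cleanup the argument is correct.
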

\begin{proof}
Consider any non-incidence matrix $M$ for $P$ and a fooling set of size $q := \omega(M)$. It suffices to show that the facet-vertex non-incidence matrix of~$P$ has a fooling set of size~$q$ as well. By reordering rows and columns of $M$ if necessary, we may assume that the fooling set is $(F_1,G_1)$, \ldots, $(F_q,G_q)$. Thus, $F_1$, \ldots, $F_q$ and $G_1$, \ldots, $G_q$ are faces of $P$ satisfying: $F_i \not\supseteq G_i$ for all $i \in \ints{q}$, and $F_i \supseteq G_j$ or $F_j \supseteq G_i$ for all $i, j \in \ints{q}$ with $i \neq j$. We assume that our fooling set is chosen to maximize the number of $F_i$'s which are facets, plus the number of $G_i$'s which are vertices.

If every $F_i$ is a facet and every $G_i$ is a vertex, we are done because then our fooling set is contained in the facet vs.\ vertex submatrix of $M$. Otherwise, w.l.o.g., there exists an index $i$ such that $F_i$ is not a facet. Let $i'$ be such that $F_{i'}$ is a facet with $F_{i'} \supseteq F_i$ but at the same time $F_{i'} \not\supseteq G_i$. If $i' > q$ then we can replace $(F_i,G_i)$ by $(F_{i'},G_i)$ in the fooling set, contradicting the choice of the fooling set. Otherwise, $i' \leqslant q$ and the fooling set contains a pair of the form $(F_{i'},G_{i'})$. Since $F_{i'} \not\supseteq G_{i'}$, we have $F_{i} \not\supseteq G_{i'}$. Moreover, $F_{i'} \not\supseteq G_i$ by choice of $F_{i'}$. This contradicts the fact that $(F_i,G_i)$, \ldots, $(F_q,G_q)$ is a fooling set.
\end{proof}

In this section, by constructing large fooling sets, we can give lower bounds on the \mnfx{} for a number of examples, including cubes (in Section
\ref{ssec:lb:cube}) and the Birkhoff polytope (in Section~\ref{ssec:lb:birkhoff}).  But before we do that, we will take a look at the limitations of the clique
number as a lower bound for the rectangle covering number.  We show that the clique number of the \thegraph{} is always $O(d^2)$ for $d$-dimensional polytopes,
and $O(d)$ for simple polytopes.

\subsubsection{Limitations of the Clique Number as a Lower Bound on the Rectangle Covering Number}\label{sssec:lb:clique:limit}

Here we give some upper bounds on the sizes of fooling sets. In some cases, these bounds immediately render useless the fooling set approach to obtain a
desired lower bound for the rectangle covering number or the \mnfx{} of a polytope.
We start with an easy upper bound based on the number of zeros per row.

\begin{lem}\label{lem:lb:clique:zeros}
  If every row of $M$ contains at most $s$ zeros, then $\omega(M) \leqslant 2s+1$.
\end{lem}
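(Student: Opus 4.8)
The plan is to prove the contrapositive via a counting argument on a supposed large fooling set. Suppose $\omega(M) \geqslant 2s+2$ and pick a fooling set $(i_1,j_1),\ldots,(i_q,j_q)$ with $q = 2s+2$; by definition $M_{i_k,j_k}\neq 0$ for each $k$, and for every pair $k\neq\ell$ we have $M_{i_k,j_\ell}=0$ or $M_{i_\ell,j_k}=0$. The idea is to think of the fooling set as a tournament: orient the edge between $k$ and $\ell$ from $k$ to $\ell$ whenever $M_{i_k,j_\ell}=0$ (breaking ties arbitrarily when both vanish). This gives an orientation of the complete graph on $q$ vertices.

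Next I would invoke the elementary fact that any tournament on $q$ vertices has a vertex of out-degree at least $\lceil (q-1)/2 \rceil$ (since the out-degrees sum to $\binom{q}{2}$). Applying this with $q = 2s+2$ yields a vertex $k$ with out-degree at least $s+1$ — that is, there are at least $s+1$ indices $\ell$ with $M_{i_k,j_\ell}=0$. Now observe that the columns $j_\ell$ appearing in a fooling set are necessarily pairwise distinct: if $j_\ell = j_m$ for $\ell\neq m$, then $M_{i_\ell,j_m}=M_{i_\ell,j_\ell}\neq 0$ and $M_{i_m,j_\ell}=M_{i_m,j_m}\neq 0$, contradicting the fooling-set property. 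Hence the at least $s+1$ positions $j_\ell$ with $M_{i_k,j_\ell}=0$ are distinct column indices, so row $i_k$ of $M$ contains at least $s+1$ zeros, contradicting the hypothesis that every row has at most $s$ zeros. Therefore $\omega(M)\leqslant 2s+1$.

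The only mildly delicate point is the tie-breaking: when both $M_{i_k,j_\ell}=0$ and $M_{i_\ell,j_k}=0$ hold, we must commit to one direction so that the auxiliary digraph is a genuine tournament; this is harmless since we only need an upper bound on zeros per row and assigning the "zero" to either of the two rows is fine. I expect the main (and really the only) obstacle is simply to state this tie-breaking cleanly and to recall the tournament out-degree lemma; everything else is bookkeeping. A symmetric argument with rows and columns exchanged shows that if every column of $M$ has at most $s$ zeros then $\omega(M)\leqslant 2s+1$ as well.
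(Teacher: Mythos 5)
Your proof is correct and is essentially the same counting argument as the paper's: the paper counts at least $\binom{q}{2}$ zeros in the $q \times q$ submatrix induced by the fooling set and applies the pigeonhole principle to find a row with at least $(q-1)/2$ zeros, which is precisely your tournament out-degree argument in different clothing. The tournament formulation is a clean way to handle the assignment of each pair to a responsible row, but there is no substantive difference in the two approaches.
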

\begin{proof}
If $M$ has a fooling set of size $q$, then the submatrix of $M$ induced by the rows and columns of the entries belonging to the fooling set contains at least $q(q-1)/2$ zeros. In particular, one row of the submatrix has at least $(q-1)/2$ zeros. By hypothesis, $(q-1)/2 \leqslant s$, that is, $q \leqslant 2s + 1$.
\end{proof}

If every vertex of a polytope is contained in at most $s$ facets, or if every facet of a polytope contains at most $s$ vertices, then $\omega(P) = O(s)$.  In particular, for simple or simplicial polytopes, the fooling set lower bound is within a constant factor of the dimension.

We now give dimensional upper bounds on $\omega(M)$.  Dietzfelbinger et al.~\cite{DietzfelbingerHromkovicSchnitger96} show the following.

\begin{lem}[Dietzfelbinger et al.~\cite{DietzfelbingerHromkovicSchnitger96}]
  For every field $\mathbb{K}$ and for every 0/1-matrix $M$, there is no fooling set larger than $\rk_{\mathbb{K}}(M)^2$.
\end{lem}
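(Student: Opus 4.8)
The plan is to show that if $M$ has a fooling set $(i_1,j_1),\ldots,(i_q,j_q)$, then $q \le \rk_{\mathbb{K}}(M)^2$, by associating to the fooling set a collection of $q$ rank-one matrices over $\mathbb{K}$ that are linearly independent in the space of $m\times n$ matrices, and then observing that they all live in a space of dimension at most $\rk_{\mathbb{K}}(M)^2$.

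First I would set up notation. After reordering rows and columns, assume the fooling set occupies positions $(1,1),\ldots,(q,q)$, so $M_{kk} = 1$ for all $k \in [q]$, and for every pair $k \ne \ell$ we have $M_{k\ell} = 0$ or $M_{\ell k} = 0$. For each $k \in [q]$ define the rank-one matrix $N_k := M_{\star,k}\,M_{k,\star}$ over $\mathbb{K}$, i.e.\ the outer product of the $k$th column of $M$ with the $k$th row of $M$; its $(a,b)$ entry is $M_{ak}M_{kb}$. Each $N_k$ lies in the tensor product $C \otimes R$, where $C = \operatorname{span}_{\mathbb{K}}\{M_{\star,1},\ldots,M_{\star,n}\}$ is the column space of $M$ and $R$ is the row space of $M$; this space has dimension $\rk_{\mathbb{K}}(M)^2$. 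So it suffices to prove the $N_k$ are linearly independent.

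The key step is linear independence. Suppose $\sum_{k=1}^q c_k N_k = 0$ for scalars $c_k \in \mathbb{K}$. Evaluate this at position $(k,k)$ for a fixed $k$: the $(k,k)$ entry of $N_\ell$ is $M_{k\ell}M_{\ell k}$. For $\ell \ne k$ this is $0$ because one of the two factors vanishes by the fooling-set property, while for $\ell = k$ it is $M_{kk}M_{kk} = 1$. Hence the $(k,k)$ entry of $\sum_\ell c_\ell N_\ell$ equals $c_k$, forcing $c_k = 0$. Since $k$ was arbitrary, all $c_k = 0$, so the $N_k$ are linearly independent. Therefore $q \le \dim_{\mathbb{K}}(C\otimes R) = \rk_{\mathbb{K}}(M)^2$, which is the claimed bound.

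The main obstacle, such as it is, is purely bookkeeping: making sure the "diagonal evaluation" argument is stated cleanly and that one correctly identifies the ambient space containing all the $N_k$ as $C \otimes R$ rather than the full matrix space, so that the dimension bound is $\rk_{\mathbb{K}}(M)^2$ and not something larger. One should also double-check the fooling-set condition is being used in the right direction --- it guarantees that for $k \ne \ell$ the product $M_{k\ell}M_{\ell k}$ (equivalently, that the two off-diagonal positions coupling rows $k,\ell$ and columns $k,\ell$ cannot both be nonzero) vanishes, which is exactly what kills the cross terms. No deeper idea is needed; the whole argument is a short rank computation.
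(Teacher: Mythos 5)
Your proof is correct and proves the lemma in full generality. The paper cites this result from Dietzfelbinger et al.\ without reproducing the proof, but uses the same underlying technique in its proof of Lemma~\ref{lem:lb:clique:tensor} (the bound $\omega(P)\leqslant(d+1)^2$): in both, one builds one rank-one matrix per fooling-set element, proves linear independence using the fooling-set condition, and counts dimensions. The one substantive difference is in how independence is established. You evaluate $\sum_k c_k N_k$ at the diagonal positions $(\ell,\ell)$, where the fooling-set condition kills all cross terms $M_{\ell k}M_{k\ell}$ with $k\neq\ell$ and the 0/1 structure gives $M_{\ell\ell}^2=1$; this is purely combinatorial, field-agnostic, and yields precisely the ambient dimension $\rk_{\mathbb{K}}(M)^2$ via $C\otimes R$. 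The paper's Lemma~\ref{lem:lb:clique:tensor} instead pairs $z_i t_i^T$ against $t_j z_j^T$ under the trace inner product and invokes the strict positivity $\scalProd{z_i}{t_i}>0$; that biorthogonality argument is tailored to $\RR$ (an ordered field), uses the geometric data $(A,b,v_j)$ rather than the entries of $M$, and targets $(d+1)^2$ rather than $\rk(M)^2$ precisely because, as the paper remarks, the rank of a polytope's slack matrix can greatly exceed $d+1$. So your version is the correct, slightly cleaner instantiation of the shared idea for arbitrary 0/1 matrices over arbitrary fields.
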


{Although the rank of the non-incidence matrix $M$ of a polytope can be substantially larger than its dimension (this is the case already for polygons), we can nevertheless prove the following by techniques similar to those of \cite{DietzfelbingerHromkovicSchnitger96}.}

\begin{lem}\label{lem:lb:clique:tensor}
  For every polytope $P$ of dimension $d$, we have $\omega(P) \leqslant (d+1)^2$.
\end{lem}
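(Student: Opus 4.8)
The plan is to mimic the rank-squared bound of Dietzfelbinger et al., but to replace the (possibly huge) rank of the non-incidence matrix by the quantity $d+1$, which is the rank of a *slack matrix* after a natural homogenization. Let $P \subseteq \R^d$ have dimension $d$, fix the facet vs.\ vertex slack matrix $S \in \RNonNeg^{m \times n}$, and suppose $(F_1,v_1),\dots,(F_q,v_q)$ is a fooling set; that is, $S_{i,i} \neq 0$ for all $i$, while for every $i \neq j$ at least one of $S_{i,j}$, $S_{j,i}$ is zero. After reordering, we may assume the rows and columns involved are the first $q$. I want to produce, for each index $i \in [q]$, a vector $x_i$ in some space of dimension $d+1$ and a linear functional $\varphi_i$ on that space such that $\varphi_i(x_j)$ ``sees'' the entry $S_{i,j}$ in a way that forces the $q$ vectors $x_1 \otimes x_1, \dots, x_q \otimes x_q$ in the $(d+1)^2$-dimensional tensor space to be linearly independent, giving $q \le (d+1)^2$.

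Concretely, first I would homogenize: write each vertex $v_j$ as $\hat v_j := (v_j, 1) \in \R^{d+1}$ and each facet inequality $\scalProd{A_{i,\star}}{x} \le b_i$ as the functional $\hat a_i := (-A_{i,\star}, b_i) \in (\R^{d+1})^*$, so that $\scalProd{\hat a_i}{\hat v_j} = b_i - \scalProd{A_{i,\star}}{v_j} = S_{i,j} \ge 0$. Because $\dim(P) = d$, the vectors $\hat v_1,\dots,\hat v_n$ span $\R^{d+1}$. Now consider the symmetric tensors $\hat v_j^{\otimes 2} \in \R^{d+1} \otimes \R^{d+1}$ for $j \in [q]$, and the functionals $\hat a_i^{\otimes 2}$ for $i \in [q]$; note $\scalProd{\hat a_i^{\otimes 2}}{\hat v_j^{\otimes 2}} = S_{i,j}^2$. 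The matrix $\bigl(S_{i,j}^2\bigr)_{i,j \in [q]}$ has a nonzero diagonal and is ``triangular up to transposition'': for $i \neq j$, at least one of the symmetric pair of off-diagonal entries vanishes. Hence after simultaneously permuting rows and columns it has nonzero diagonal and is, say, lower-triangular (this is the standard observation that a fooling-set pattern can be put in triangular form — it is exactly the combinatorial content used by Dietzfelbinger et al.), so it is nonsingular. Therefore the Gram-type matrix $\bigl(\scalProd{\hat a_i^{\otimes 2}}{\hat v_j^{\otimes 2}}\bigr)_{i,j}$ is nonsingular, which forces the $q$ vectors $\hat v_1^{\otimes 2},\dots,\hat v_q^{\otimes 2}$ to be linearly independent in $\R^{d+1} \otimes \R^{d+1}$. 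Since that space has dimension $(d+1)^2$, we conclude $q \le (d+1)^2$, and since this holds for any fooling set, $\omega(P) = \omega(S) \le (d+1)^2$. (By the preceding proposition, using the facet vs.\ vertex matrix loses nothing.)

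The one point that needs care — and the step I expect to be the main obstacle — is the claim that a fooling-set pattern can always be put into triangular form by a *simultaneous* row/column permutation. This is not completely trivial: the condition ``$S_{i,j} = 0$ or $S_{j,i} = 0$ for $i \neq j$'' says that the directed graph on $[q]$ with an arc $i \to j$ whenever $S_{i,j} = 0$ has, for every unordered pair, at least one of the two arcs present; equivalently its complement is an oriented graph (no 2-cycles), but the complement need not be acyclic, so a single global topological order need not exist. The correct fix is the one already implicit in the Dietzfelbinger et al.\ argument: one does not need a global triangularization of the $0/1$ pattern but only that the $q \times q$ matrix $(S_{i,j}^2)$ has nonzero determinant. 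To get that, expand $\det(S_{i,j}^2) = \sum_{\pi \in \mathfrak{S}_q} \operatorname{sgn}(\pi) \prod_{i} S_{i,\pi(i)}^2$ and observe that the identity permutation contributes $\prod_i S_{i,i}^2 > 0$, while any non-identity $\pi$ has a cycle of length $\ge 2$, hence an index $i$ with $i \neq \pi(i)$ *and* $\pi(i) \neq \pi^{-1}(i)$ — but this still does not immediately kill the term, since $\pi$ could consist of a single long cycle avoiding all the forced zeros. So the genuinely correct route is instead: replace the entries $S_{i,j}$ by generic values respecting the zero pattern, or — cleaner — argue directly on the vectors. I would therefore carry out the final step as follows: suppose $\sum_{i \in [q]} c_i \, \hat v_i^{\otimes 2} = 0$; apply $\hat a_k^{\otimes 2}$ to get $\sum_i c_i S_{k,i}^2 = 0$ for all $k$. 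Pick $k$ minimizing $|\{i : c_i \neq 0\}|$-relevant structure — more simply, among all $i$ with $c_i \ne 0$ choose one, say $i_0$, and find a facet $F_k$ in the fooling set with $S_{k,i} = 0$ for all $i \ne i_0$ in the support; but such a ``source-like'' index exists precisely because the relation, restricted to any subset, still has the every-pair property, and a finite oriented tournament-complement on that subset must have a vertex that is a ``sink'' of the zero-pattern relation restricted appropriately. The cleanest formulation, and the one I would actually write, is: among the indices $i$ with $c_i \neq 0$, the sub-pattern still satisfies ``$S_{i,j}=0$ or $S_{j,i}=0$'', so there is an index $i_0$ in that set such that $S_{i_0, j} = 0$ for every other $j$ in the set with $c_j \ne 0$ OR $S_{j,i_0}=0$ for every such $j$; by symmetry (working with $\hat a^{\otimes 2}$ versus $\hat v^{\otimes 2}$ the roles of rows and columns are interchangeable) assume the former, apply $\hat a_{i_0}^{\otimes 2}$ to $\sum_i c_i \hat v_i^{\otimes 2} = 0$, and all terms vanish except $c_{i_0} S_{i_0,i_0}^2$, forcing $c_{i_0} = 0$, a contradiction. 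This needs the auxiliary combinatorial fact that in any finite relation $R$ on a set $X$ with $\{(i,j),(j,i)\} \cap R \ne \varnothing$ for all $i \ne j$, there is an element that is related (in $R$) to all others or to which all others are related — which follows by considering a maximal ``$R$-increasing'' chain — and that is the small lemma I expect to spend the most words on.
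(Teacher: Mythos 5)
Your homogenization is exactly right --- both you and the paper replace the large rank of the non-incidence matrix by $d+1$ by passing to $z_i = (b_i,-A_{i,\star})$ and $t_j = (1,v_j)$ in $\R^{d+1}$, with $\scalProd{z_i}{t_j}=S_{ij}$. The gap is in the tensor step. Pairing $\hat a_i^{\otimes 2}$ with $\hat v_j^{\otimes 2}$ produces the matrix $(S_{ij}^2)$, but the fooling-set hypothesis only guarantees $S_{ij}S_{ji}=0$ for $i\neq j$, not $S_{ij}^2=0$, so there is no reason for this matrix to have any useful zero pattern. You are quite right that it cannot in general be triangularized by a simultaneous permutation, but the repair you propose --- that any relation $R$ on a finite set with $\{(i,j),(j,i)\}\cap R\neq\varnothing$ for all $i\neq j$ must have a universal source or a universal sink --- is false. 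Take $R=\{(1,2),(2,3),(3,1)\}$ on $\{1,2,3\}$: every unordered pair carries an arc, yet no element dominates all others nor is dominated by all others. A maximal $R$-increasing chain can wrap around a directed cycle and need not terminate in a sink, so the intended cancellation step $c_{i_0}=0$ cannot be extracted and the proof does not close.

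The paper avoids the issue entirely by an asymmetric pairing: it considers the $2q$ rank-one matrices $z_it_i^T$ and $t_jz_j^T$ in $\R^{(d+1)\times(d+1)}$ and pairs the first family against the \emph{transposed} second family under the trace inner product, yielding
\[
\scalProd{z_it_i^T}{t_jz_j^T}=\scalProd{z_i}{t_j}\,\scalProd{z_j}{t_i}=S_{ij}S_{ji},
\]
which is precisely the quantity the fooling-set condition kills for $i\neq j$, while the diagonal entries $S_{ii}^2$ are positive. The resulting Gram-type matrix is diagonal and nonsingular, so $z_1t_1^T,\dots,z_qt_q^T$ are linearly independent in a space of dimension $(d+1)^2$, and $q\leqslant(d+1)^2$ follows with no auxiliary combinatorial lemma. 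The symptom you detected (``the complement need not be acyclic'') is exactly the signal that the symmetric tensor $\hat a_i^{\otimes 2}$ is the wrong partner; swapping the roles of $z$ and $t$ in one of the two families is the missing twist.
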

\begin{proof}
  { Without loss of generality, assume that $P$ is full-dimensional. Let $M$ be a non-incidence matrix for $P = \{x \in \R^d : Ax \leqslant b\} = \conv(\{v_1,\ldots,v_n\})$, and $q := \omega(P)$. By reordering if necessary, we may assume that $(1,1),\dots, (q,q)$ are the vertices of a maximum clique of the \thegraph{} of $M$.  Let $z_i := (b_i, -A_{i,\star}) \in \RR^{d+1}$, $i=1,\dots,q$, and $t_j := (1,v_j) \in \RR^{d+1}$, $j=1,\dots,q$.}  These vectors have the following properties:
\begin{equation}\label{eq:lb:geom-fool}
    \begin{aligned}
      \scalProd{z_i}{t_j} &\geqslant 0 &&\text{ for all } i,j = 1,\dots,q\,; \\
      \scalProd{z_i}{t_i} &> 0 &&\text{ for all } i = 1,\dots,q\,;\\
      \scalProd{z_i}{t_j} &= 0 &&\text{ or } \quad\scalProd{z_j}{t_i} = 0 \text{ for all } i,j = 1,\dots,q \text { with } i \neq j\,.
    \end{aligned}
  \end{equation}
  
  Now consider the following $2q$ rank-one matrices: $z_i t_i^T$ for $i=1,\dots,q$ and $t_j z_j^T$ for $j=1,\dots,q$.  Taking the usual inner product for matrices $\scalProd{A}{B} = \sum_{i,j} A_{ij}B_{ij} = \mathrm{Tr}(A^T B)$, if $i \neq j$, we have 
  \begin{equation*}
    \scalProd{z_i t_i^T}{t_j z_j^T} = \mathrm{Tr}(t_i z_i^T t_j z_j^T) = \mathrm{Tr} (z_j^T t_i z_i^T t_j) = (z_j^T t_i) (z_i^T t_j)  =  
    \scalProd{z_j}{t_i} \scalProd{z_i}{t_j} = 0\,.
  \end{equation*}
  but
  \begin{equation*}
    \scalProd{z_i t_i^T}{t_i z_i^T} = \scalProd{z_i}{t_i} \scalProd{z_i}{t_i} > 0\,.
  \end{equation*}
  This implies that the matrices $z_1 t_1^T, \ldots, z_q t_q^T$ are  linearly independent.  Since we
  have $q$ linearly independent $(d+1)\times(d+1)$-matrices, we conclude that
  $q \leqslant (d+1)^2$.
\end{proof}

\begin{rem}
  From a construction due to Dietzfelbinger et al.~\cite{DietzfelbingerHromkovicSchnitger96}, we can infer the existence of a $d$-dimensional polytope $P$ with
  $\omega(P) = \Omega(d^{\frac{\log 4}{\log 3}})$.  It is an open question which of these bounds can be improved. \end{rem}

We conclude this subsection with an example where the fooling set or clique number bound is particularly bad.  Consider the vertex-facet non-incidence-matrix of a
convex polygon with {$n$} vertices.  This is a $n \times n$ 0/1-matrix $M$ with $M_{i,j} = 0$ if and only if $i$ is equal to $j$ or $j+1$ modulo $n$.  We have $\omega(M) \leqslant 5$, by Lemma~\ref{lem:lb:clique:zeros}, whereas $\rc(M) \geqslant \log n$ by Proposition~\ref{prop:lb:facenr}.

\subsubsection{The Cube}\label{ssec:lb:cube}

We now apply the fooling set technique to show that $d$-cubes are ``minimal extensions'' of themselves.  A \emph{combinatorial $d$-cube} is a polytope whose face lattice is isomorphic to that of {the unit cube} $[0,1]^d$.  Compare the result of this proposition to the lower bound one can obtain as the binary logarithm of the number of faces, Proposition~\ref{prop:lb:facenr}, which is $d \log(3) \approx \mbox{1.585}\,d$.

\begin{prop}\label{prop:lb:clique:cube}
  If $P$ is a combinatorial $d$-cube then $\xc{P} = 2d$.
\end{prop}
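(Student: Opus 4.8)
The upper bound $\xc{P} \le 2d$ is immediate: a combinatorial $d$-cube has exactly $2d$ facets, and the extension complexity of any polytope is at most its number of facets (take the polytope itself as an extension). So the real content is the lower bound $\xc{P} \ge 2d$, and by \eqref{eq:rcxc} it suffices to prove the stronger statement $\rc(P) \ge 2d$; in fact the plan is to exhibit a fooling set of size $2d$ in a non-incidence matrix of $P$, so that $\omega(P) \ge 2d$ and hence $\rc(P) = \chi(\theG{M}) \ge \omega(P) \ge 2d$ by Lemma~\ref{lem:rc_chi}. Since $\rc(P)$ and $\omega(P)$ depend only on the face lattice of $P$, we may work directly with the standard cube $[0,1]^d$.

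\textbf{Constructing the fooling set.} The $2d$ facets of $[0,1]^d$ are $F_i^0 = \{x : x_i = 0\}$ and $F_i^1 = \{x : x_i = 1\}$ for $i \in [d]$. The natural candidate for a fooling set is to pair each facet with a vertex lying just off it: for $F_i^0$ pick the vertex $v$ with $v_i = 1$ and all other coordinates $0$ (the $i$th unit vector $\unitVec{i}$), and for $F_i^1$ pick the vertex $w$ with $w_i = 0$ and all other coordinates $1$ (the complement $\oneVec - \unitVec{i}$). This gives $2d$ pairs $(\text{facet}, \text{vertex})$, each of which is a non-incidence (the chosen vertex is not on the chosen facet), so each is a $1$-entry of the non-incidence matrix. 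I would then verify the fooling-set condition: for any two distinct pairs $(G,u)$, $(G',u')$ from this list, at least one of ``$u' \in G$'' or ``$u \in G'$'' holds — equivalently, the $2\times 2$ submatrix on these rows and columns is not all-ones. Checking the cases: two pairs coming from $F_i^0, F_j^0$ with $i\ne j$ — then $\unitVec{j} \in F_i^0$ since its $i$th coordinate is $0$; two from $F_i^1, F_j^1$ with $i \ne j$ — then $\oneVec - \unitVec{j} \in F_i^1$ since its $i$th coordinate is $1$; and the mixed case $F_i^0, F_j^1$ — if $i = j$ the two pairs are $(F_i^0, \unitVec{i})$ and $(F_i^1, \oneVec-\unitVec{i})$, and $\unitVec{i} \in F_i^1$; if $i \ne j$, then $\unitVec{i} \in F_j^1$ fails (its $j$th coord is $0$, not $1$) but $\oneVec - \unitVec{i} \in F_i^0$ fails too... so here I need to be slightly careful and check that $\oneVec - \unitVec{i}$ has $i$th coordinate $0$, hence lies on $F_i^0$ — yes it does. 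So in every case the condition is satisfied.

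\textbf{Where the care is needed.} The only subtle point is getting the pairing right so that the mixed cases $F_i^0$ versus $F_j^1$ genuinely work for \emph{all} $i,j$; a naive choice (e.g. always pairing $F_i^0$ and $F_i^1$ with the \emph{same} vertex, or using $\unitVec{i}$ for $F_i^1$) breaks some case. The pairing above — $\unitVec{i}$ for $F_i^0$ and its complement $\oneVec - \unitVec{i}$ for $F_i^1$ — is engineered precisely so that for any index $i$ and any facet $F_j^\epsilon$ not equal to the one it is paired with, the vertex lands on that facet: $\unitVec{i}$ has all coordinates $0$ except the $i$th, so it lies on every $F_j^0$ with $j \ne i$; and $\oneVec - \unitVec{i}$ has all coordinates $1$ except the $i$th, so it lies on every $F_j^1$ with $j \ne i$, and also on $F_i^0$. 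One checks that this covers every pair of distinct indices among the $2d$ facets. I expect this verification to be the main (and only) obstacle, and it is purely a finite case check with no real difficulty. Combining, $\omega(P) \ge 2d$, so $\xc{P} \ge \rc(P) \ge \omega(P) \ge 2d$, and together with the trivial upper bound $\xc{P} \le 2d$ we conclude $\xc{P} = 2d$.
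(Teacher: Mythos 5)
Your upper bound and overall strategy (exhibit a fooling set of size $2d$, hence $\omega(P)\geqslant 2d$) match the paper, but the fooling set you construct does not in fact work, and the failure is exactly in the mixed case you flagged but then glossed over. Take the pairs $(F_i^0,\unitVec{i})$ and $(F_j^1,\oneVec-\unitVec{j})$ with $i\neq j$. The fooling-set condition requires $\oneVec-\unitVec{j}\in F_i^0$ or $\unitVec{i}\in F_j^1$. But $(\oneVec-\unitVec{j})_i = 1\neq 0$ since $i\neq j$, so the first fails; and $(\unitVec{i})_j = 0\neq 1$ since $i\neq j$, so the second fails too. Hence that $2\times 2$ block is all-ones and these two non-incidences lie in a common $1$-rectangle. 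Your paragraph actually notices both of these failures, but then swerves into checking whether $\oneVec-\unitVec{i}$ (wrong index --- this is the vertex paired with $F_i^1$, not $F_j^1$) lies on $F_i^0$, which is true but irrelevant to the pair under consideration. In low dimension the problem is even starker: for $d=2$ and $i=1,j=2$ one has $\unitVec{1}=(1,0)=\oneVec-\unitVec{2}$, so the two pairs share the very same vertex, which is impossible inside any fooling set.

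The paper's construction avoids this by not using unit vectors. It pairs the facet $x_i\geqslant 0$ with the ``prefix'' vertex $v_i$ having $(v_i)_j=1$ for $j\leqslant i$ and $(v_i)_j=0$ for $j>i$, and pairs $x_i\leqslant 1$ with the ``suffix'' vertex $w_i$ having $(w_i)_j=0$ for $j\leqslant i$ and $(w_i)_j=1$ for $j>i$. With this choice, in every one of the four cases (two $v$'s, two $w$'s, or a mixed pair with $i<j$, $i=j$, or $i>j$) the required incidence holds: e.g.\ for $(x_i\geqslant 0,v_i)$ and $(x_j\leqslant 1,w_j)$ with $i<j$ one has $(w_j)_i=0$ so $w_j$ lies on the facet $x_i=0$, while for $i>j$ one has $(v_i)_j=1$ so $v_i$ lies on the facet $x_j=1$. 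You should replace your unit-vector pairing with this one (or any other valid fooling set) --- the rest of your argument, reducing $\xc{P}\geqslant 2d$ to exhibiting a fooling set via Lemma~\ref{lem:rc_chi} and inequality~\eqref{eq:rcxc}, is correct.
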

\begin{proof} 
{ We obviously have $\omega(P) \leqslant \xc{P} \leqslant 2d$. Below, we prove that equality holds throughout by constructing a fooling set of size $2d$, implying $\omega(P) \geqslant 2d$. Since $\omega(P)$ only depends on the combinatorial type of $P$, we may assume that $P = [0,1]^d$.}  To define { our} fooling set, we carefully select for each facet-defining inequality of $P$ a corresponding vertex. The vertex corresponding to the inequality $x_i\geqslant 0$ is the vertex $v_i$ defined as follows:
  \begin{equation*}
    (v_{i})_{j} :=
    \left\{\begin{array}{ll}
        1& \textrm{if}\quad  1\leqslant j\leqslant i\\
        0& \textrm{if}\quad i<j\leqslant d
      \end{array}\right.
  \end{equation*}
  For the inequality $x_i\leqslant 1$, take the vertex $w_i$ defined as follows:
  \begin{equation*}
    (w_{i})_{j} :=
    \left\{\begin{array}{ll}
        0& \textrm{if}\quad  1\leqslant j\leqslant i\\
        1& \textrm{if}\quad i<j\leqslant d
      \end{array}\right.
  \end{equation*}
  One can check that none of the facets of the cube is incident with its corresponding vertex, but that for any two facets at least one of the corresponding vertices is incident to one of them.
\end{proof}

\subsubsection{The Birkhoff Polytope}\label{ssec:lb:birkhoff}

The $n$th \emph{Birkhoff polytope} is the set of doubly stochastic $n\times n$ matrices or, equivalently, the convex hull of all $n \times n$ permutation matrices. Let $P$ denote the $n$th Birkhoff polytope. For $n=1$, $P$ is a point (and $\xc{P}$ is not defined). For $n = 2$, $P$ is a segment and therefore $\xc{P} = \omega(P) = 2$. For $n = 3$,  $P$ has $9$ facets and $6$ vertices, thus $\xc{P} \leqslant 6$. As is easily verified, the facet vs.\ vertex non-incidence matrix for $P$ has a fooling set of size $6$. Hence, $\xc{P} = \omega(P) = 6$.

\begin{prop}\label{prop:lb:birhoff}
  For $n\geqslant 4$, the \mnfx{} of the Birkhoff polytope is $n^2$.
\end{prop}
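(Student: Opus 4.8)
The plan is to prove the two inequalities $\xc{P}\le n^2$ and $\xc{P}\ge n^2$. The first is immediate, since for $n\ge 3$ the polytope $P$ has exactly $n^2$ facets --- the nonnegativity constraints $x_{ij}\ge 0$, $(i,j)\in\ints{n}^2$ --- and $\xc{P}$ is at most the number of facets. For the second, in view of $\xc{P}\ge\rc(P)\ge\omega(P)$ it suffices to exhibit a fooling set of size $n^2$ in the facet vs.\ vertex non-incidence matrix $M$ of $P$. Here a row is a cell $(i,j)$, a column is a vertex, i.e.\ a permutation $\pi\in S_n$, and $M_{(i,j),\pi}=1$ iff $\pi$ does not lie on the facet $x_{ij}\ge 0$, i.e.\ iff $\pi(i)=j$. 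So a fooling set of size $n^2$ is exactly a choice of a permutation $\sigma_{ij}\in S_n$ for every cell $(i,j)$ with $\sigma_{ij}(i)=j$ such that for all $(i,j)\ne(k,l)$ one does not simultaneously have $\sigma_{ij}(k)=l$ and $\sigma_{kl}(i)=j$ (call this a \emph{collision} between the two cells).

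First I would identify $\ints{n}$ with $\mathbb{Z}_n$ and note that the obvious candidates fail: if every $\sigma_{ij}$ is the shift $x\mapsto x+(j-i)$, the two equations defining a collision become equivalent, so any two distinct cells on a common ``diagonal'' $\{(x,x+c):x\in\mathbb{Z}_n\}$ collide; and if $\sigma_{ij}$ is the transposition $(i\ j)$, then $(i,j)$ and $(j,i)$ collide. The remedy is a small asymmetric perturbation. Let $\tau_i\in S_n$ be the transposition exchanging $i$ and $i-1\pmod n$, and define $\sigma_{ij}(x):=\tau_i(x)+(j-i+1)\pmod n$. Since $\tau_i(i)=i-1$, this gives $\sigma_{ij}(i)=(i-1)+(j-i+1)=j$, so it is a legitimate choice of permutation through cell $(i,j)$.

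The heart of the argument is the verification that there are no collisions. Suppose $\sigma_{ij}(k)=l$ and $\sigma_{kl}(i)=j$; unwinding the definitions gives $\tau_i(k)=l-j+i-1$ and $\tau_k(i)=j-l+k-1$. If $i=k$, then $j\ne l$ and the first equation reads $i-1=l-j+i-1$, forcing $l=j$, a contradiction. If $i\ne k$, adding the two equations yields $\tau_i(k)+\tau_k(i)=i+k-2$ in $\mathbb{Z}_n$. On the other hand, $\tau_i$ fixes everything except $i$ and $i-1$, so for $k\ne i$ we have $\tau_i(k)=k+[k\equiv i-1]$ and likewise $\tau_k(i)=i+[i\equiv k-1]$, where $[\,\cdot\,]$ is $1$ if the condition holds and $0$ otherwise. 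Because $n\ge 3$, at most one of $k\equiv i-1$ and $i\equiv k-1$ can hold, so $\tau_i(k)+\tau_k(i)\equiv i+k+\varepsilon\pmod n$ with $\varepsilon\in\{0,1\}$. Comparing with $i+k-2$ we would need $\varepsilon+2\equiv 0\pmod n$, which is impossible for $n\ge 4$ since $\varepsilon+2\in\{2,3\}$. Hence $\{\,((i,j),\sigma_{ij}):(i,j)\in\ints{n}^2\,\}$ is a fooling set of size $n^2$, and the proposition follows.

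The step I expect to be the main obstacle is finding the family $(\sigma_{ij})$: the collision condition is symmetric in the two cells, which rules out all homogeneous candidates, and the whole purpose of the perturbation $\tau_i$ is to push $\tau_i(k)+\tau_k(i)$ into a residue class ($i+k$ or $i+k+1$ mod $n$) that the collision equation ($i+k-2$ mod $n$) cannot reach once $n\ge 4$. This also transparently explains why $n=3$ is genuinely different --- there $\varepsilon=1$ whenever $i\ne k$ and $3\mid 3$, so this construction breaks down, consistent with $\xc{P}=6<9$ for $n=3$. Everything else (the upper bound and the chain $\omega(P)\le\rc(P)\le\xc{P}$) is immediate from what has already been established.
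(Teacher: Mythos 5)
Your proof is correct and follows essentially the same approach as the paper: the upper bound is trivial, and the lower bound is obtained by exhibiting a fooling set of size $n^2$ in the facet vs.\ vertex non-incidence matrix, one permutation per cell, chosen by perturbing a homogeneous family so that collisions force an impossible congruence mod $n$. The only difference is the explicit family: the paper uses the reflection $k\mapsto i+j+1-k$ with the images of $i$ and $i+1$ swapped (so a collision forces $2\equiv 0$ or $3\equiv 0\pmod n$), while you use a cyclic shift composed with the transposition $\tau_i=(i\ \, i-1)$ (forcing $\varepsilon+2\equiv 0\pmod n$ with $\varepsilon\in\{0,1\}$); both constructions fail exactly for $n\le 3$, and your verification is complete and correct.
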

\begin{proof}
The Birkhoff polytope $P$ is defined by the nonnegativity inequalities $x_{i,j} \geqslant 0$, $i,j\in[n]_0=\{0,\dots,n-1\}$ (starting indices from zero will be a bit more convenient for our treatment), and the equations $\sum_{i} x_{i, j} = 1$, $j\in[n_0]$, $\sum_j x_{i,j} =1$, $i\in[n_0]$. This trivially implies that $\xc{P} \leqslant n^2$.
  To give a lower bound on the \mnfx{}, we construct a fooling set of size $n^2$ in the facet vs.\ vertex non-incidence matrix of~$P$.

  For this, for every inequality $x_{i,j} \geqslant 0$, we define one vertex $v_{i,j}$ such that $(v_{i,j})_{i,j} > 0$ by giving a permutation $\pi$ of $[n]_0$.
  Namely, let $\pi(i):=j$ and $\pi(i+1):=j+1$ (all indices are to be understood modulo $n$).  Moreover, we take $\pi(k) := i+j+1-k$ whenever $k \not\in \{i,i+1\}$. This defines a permutation, and thus a vertex $v_{i,j}$ of $P$.

  Now, we show that this family of inequality-vertex pairs is a fooling set.  By contradiction, let us assume that for two different inequalities $x_{i,j} \geqslant 0$ and $x_{i',j'} \geqslant 0$ we have both $(v_{i,j})_{i',j'} = 1$ and $(v_{i',j'})_{i,j} = 1$. Letting $\pi$ and $\pi'$ denote the permutations of $[n]_0$ associated to $v_{i,j}$ and $v_{i',j'}$ respectively, this means that $\pi(i') = j'$ and $\pi'(i) = j$. Because $(i,j) \neq (i',j')$, we conclude that $i+j+1-i' = j'$ or $(i',j') = (i+1,j+1)$, and at the same time $i'+j'+1-i = j$ or $(i,j) = (i'+1,j'+1)$. Because $2 \neq 0$ and $3 \neq 0$ modulo $n$, these conditions lead to a contradiction.
\end{proof}

The Birkhoff polytope can also be described as the convex hull of the characteristic vectors of all perfect matchings of a complete bipartite graph with $n$ vertices on each side of the bipartition. The $n$th \emph{bipartite matching polytope} is the convex hull of all (not necessarily perfect) matchings of a complete bipartite graph with $n$ vertices on each side. Let $P$ denote the $n$th bipartite matching polytope. For $n = 1$, $P$ is a segment and $\xc{P} = \omega(P) = 2$. For $n = 2$, $P$ has $8$ facets and $7$ vertices, thus $\xc{P} \leqslant 7$. We leave to the reader to check that $\omega(P) \geqslant 7$, thus $\xc{P} = \omega(P) = 7$. For $n = 3$, $P$ has $15$ facets and $34$ vertices, thus $\xc{P} \leqslant 15$. It can be checked that $P$ has a fooling set of size $15$, hence $\xc{P} = \omega(P) = 15$. (To the inequality $x_{i,j} \geqslant 0$ we associate the matching $\{(i,j),(i+1,j+1)\}$ if $i \neq j$ and $\{(i,j),(k,\ell),(\ell,k)\}$ if $i = j$ and $k, \ell$ denote the two other elements of $\{0,1,2\}$. To the inequality $\sum_j x_{i,j} \leqslant 1$ we associate the matching $\{(i+1,i+1),(i+2,i)\}$. To the inequality $\sum_i x_{i,j} \leqslant 1$ we associate the matching $\{(j+1,j+1),(j,j+2)\}$.) As before, all computations are done modulo $n=3$.) For $n \geqslant 4$, an argument similar to the one in the proof of Proposition~\ref{prop:lb:birhoff} shows the following.

\begin{prop}
  For $n \geqslant 4$, the bipartite matching polytope has \mnfx{} $n^2+2n$.
\end{prop}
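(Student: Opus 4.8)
The plan is to mimic the proof of Proposition~\ref{prop:lb:birhoff}. The upper bound $\xc{P} \le n^2 + 2n$ is immediate: the bipartite matching polytope of the complete bipartite graph on $n+n$ vertices is described by the $n^2$ nonnegativity inequalities $x_{i,j} \ge 0$ together with the $2n$ degree inequalities $\sum_j x_{i,j} \le 1$ ($i \in [n]_0$) and $\sum_i x_{i,j} \le 1$ ($j \in [n]_0$), and this is a complete linear description. The bulk of the work is the matching lower bound $\omega(P) \ge n^2 + 2n$, obtained by exhibiting a fooling set in the facet vs.\ vertex non-incidence matrix, i.e.\ a system of inequality--vertex (matching) pairs $(\text{ineq}_k, N_k)$ such that $N_k$ is \emph{incident} with $\text{ineq}_k$ (lies on that facet, i.e.\ satisfies it with equality), and for any two distinct indices $k \ne \ell$ at least one of $N_k, N_\ell$ is \emph{non}-incident with the other's facet.

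First I would fix the convention, as in Proposition~\ref{prop:lb:birhoff}, that all arithmetic on indices is modulo $n$, and I would index the three families of facets by triples. To each nonnegativity facet $x_{i,j} \ge 0$ I associate a matching $N^{0}_{i,j}$ containing the edge $(i,j)$ — e.g.\ the ``two-edge'' matching $\{(i,j),(i+1,j+1)\}$ as in the $n=3$ case, or some systematically chosen larger matching — which guarantees $(N^{0}_{i,j})_{i,j} = 1$, so the vertex is \emph{not} on the facet $\{x_{i,j} = 0\}$. To each row-degree facet $\sum_j x_{i,j} \le 1$ I associate a matching $N^{\mathrm r}_i$ that misses vertex $i$ on the left (so the slack is $1$, and the vertex is not incident to that facet), and similarly to each column-degree facet $\sum_i x_{i,j} \le 1$ a matching $N^{\mathrm c}_j$ missing vertex $j$ on the right. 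The natural choices, generalizing the displayed $n=3$ assignments, are $N^{\mathrm r}_i := \{(i+1,i+1),(i+2,i)\}$ and $N^{\mathrm c}_j := \{(j+1,j+1),(j,j+2)\}$; this is exactly where $n \ge 4$ is needed, so that indices like $i,i+1,i+2$ are pairwise distinct.

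Then I would verify the fooling-set property by checking all the pair types. The nonnegativity--nonnegativity pairs are handled exactly as in Proposition~\ref{prop:lb:birhoff}: if $(N^{0}_{i,j})_{i',j'} = 1$ and $(N^{0}_{i',j'})_{i,j} = 1$ simultaneously, the explicit form of the matchings forces each of $(i',j')$ and $(i,j)$ to be one of at most two possibilities determined by the other, and comparing these (using $2 \ne 0$, $3 \ne 0$ mod $n$) gives a contradiction. For a nonnegativity facet versus a degree facet, I would check that whenever the matching $N^{0}_{i,j}$ lies on a degree facet $\sum_j x_{i',j} \le 1$ (i.e.\ covers left-vertex $i'$), the matching $N^{\mathrm r}_{i'}$, which deliberately \emph{misses} left-vertex $i'$ and has a prescribed sparse support, cannot contain the edge $(i,j)$ — again a small case check forced by the explicit formulas — and symmetrically for column-degree facets. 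For two degree facets of the same type, say $\sum_j x_{i,j}\le 1$ and $\sum_j x_{i',j}\le 1$ with $i\ne i'$, the matching $N^{\mathrm r}_i$ covers left-vertices $i+1, i+2$ only, so it fails to be tight for $\sum_j x_{i',j}\le 1$ unless $i' \in \{i+1,i+2\}$, and one then checks the symmetric condition also fails in those two residual sub-cases. Finally the row-facet versus column-facet pairs: $N^{\mathrm r}_i$ misses left-vertex $i$, while $N^{\mathrm c}_j$ covers left-vertices $j+1$ and $j$; one checks these cannot both be tight on the other's constraint.

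The main obstacle is purely bookkeeping: there are $n^2 + 2n$ pairs and hence $O(n^2)$-many distinct pair-type interactions to verify, and the explicit matchings must be chosen so that \emph{every} one of these small modular-arithmetic checks goes through simultaneously for all $n \ge 4$ — in particular the nonnegativity matchings $N^0_{i,j}$ must be reconciled with the degree matchings $N^{\mathrm r}_i, N^{\mathrm c}_j$ without collisions. Getting a single consistent set of formulas (rather than the ad hoc $n=3$ assignments) is the delicate part; once the formulas are pinned down, each individual verification is a one-line congruence argument of exactly the flavor already carried out in the proof of Proposition~\ref{prop:lb:birhoff}.
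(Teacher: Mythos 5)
Your overall strategy --- the trivial $n^2+2n$ upper bound plus a fooling set of that size in the facet vs.\ vertex non-incidence matrix --- matches the paper's, but the explicit matchings you propose do not give a fooling set for $n\geqslant 4$. The two-edge matchings $N^{\mathrm r}_i = \{(i+1,i+1),(i+2,i)\}$ and $N^{\mathrm c}_j = \{(j+1,j+1),(j,j+2)\}$, lifted from the paper's $n=3$ display, already fail on a row-degree versus column-degree pair: pick any $j \notin \{i-1,i,i+1\}$, which exists precisely when $n\geqslant 4$. Then $N^{\mathrm r}_i$ covers only right-vertices $i$ and $i+1$, so it misses right-vertex $j$ and is non-incident with the facet $\sum_i x_{i,j}\leqslant 1$; symmetrically $N^{\mathrm c}_j$ covers only left-vertices $j$ and $j+1$, so it misses left-vertex $i$. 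Both cross-entries of the non-incidence matrix are then $1$, so those two fooling-set entries sit in a common $1$-rectangle and the fooling-set property fails. The paper's actual choice for $n\geqslant 4$ uses \emph{near-perfect} matchings on $n-1$ edges, namely $u_i := \{(k,k+1) : k\neq i\}$ for $\sum_j x_{i,j}\leqslant 1$ and $w_j := \{(k+1,k) : k\neq j\}$ for $\sum_i x_{i,j}\leqslant 1$; these miss exactly one vertex on each side, which is what makes the degree-versus-degree incidences come down to a single congruence ($2\not\equiv 0 \pmod n$).

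A related problem affects your proposed option of two-edge matchings $\{(i,j),(i+1,j+1)\}$ for the nonnegativity facets: against $u_{i'}$ with $j=i+1$ and $i'\notin\{i,i+1\}$, the matching $u_{i'}$ contains the edge $(i,i+1)$ while $\{(i,i+1),(i+1,i+2)\}$ misses left-vertex $i'$, so again both cross-entries are $1$. The paper avoids this by reusing the \emph{perfect} matchings $v_{i,j}$ from the Birkhoff-polytope proof, which saturate every vertex and are therefore incident with every degree facet, disposing of all nonnegativity-versus-degree pairs for free. So the missing ingredient is not just bookkeeping: the degree-facet matchings must be near-perfect and the nonnegativity-facet matchings perfect, which is essentially the opposite of the sparse choices your proposal starts from.
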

\begin{proof}[Proof (sketch).]
  Again, the given number is a trivial upper bound.  To construct a fooling set of this size, for the nonnegativity inequalities, we take the perfect matchings
  constructed in the proof of the previous proposition.  In addition, for the inequalities $\sum_{i} x_{i,j} \leqslant 1$, we take the vertex $w_j$ with $(w_j)_{k+1,k} = 1$ for all $k \ne j$; for the inequalities $\sum_{j} x_{ i,j} \leqslant 1$, we take the vertex $u_i$ with $(u_i)_{k,k+1} = 1$ for all $k \ne i$.
\end{proof}

\subsection{Independence Ratio, Rectangle Sizes, and Generalized Fooling Sets}\label{ssec:lb:alpha-goof}

Denote by $\alpha(G)$ the maximum cardinality of an independent vertex set in $G$.  The number $\frac{\nvtx{G}}{\alpha(G)}$ is a lower bound on the chromatic
number ($\nvtx{G}$ stands for the number of vertices in~$G$.)   Moreover, taking induced subgraphs may improve the bound: The number
\begin{equation*}
  \iota(G) := \max_{U \subseteq V(G) \atop U \neq \varnothing}\frac{\nvtx{U}}{\alpha(G[U])}\,,
\end{equation*}
where $G[U]$ denotes the subgraph of $G$ induced by the vertices in $U$ is sometimes called the \emph{independence ratio} of $G$, and is also a lower bound on the chromatic number of $G$.

In the context of the rectangle covering number, these bounds are known under different names. The following lemmas will make that clear. We leave the easy proofs to the reader.

\begin{lem}
  For a real matrix $M$, the maximum number of entries in a rectangle contained in $\supp(M)$ equals $\alpha(\theG{M})$. \qed
\end{lem}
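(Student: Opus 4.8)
The statement to prove is: for a real matrix $M$, the maximum number of entries in a rectangle contained in $\supp(M)$ equals $\alpha(\theG{M})$. The plan is to exhibit a bijective correspondence between rectangles contained in $\supp(M)$ and independent sets in the \thegraph{} $\theG{M}$, which then immediately matches the two extremal quantities. First I would recall the two relevant definitions from the excerpt: a rectangle contained in $\supp(M)$ is a set $R = I \times J$ with $M_{i,j} \neq 0$ for every $(i,j) \in I \times J$; and $(i,j), (k,\ell)$ are adjacent in $\theG{M}$ exactly when no rectangle inside $\supp(M)$ covers both, equivalently when $M_{i,\ell} = 0$ or $M_{k,j} = 0$.

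The key steps are as follows. In one direction, I would take a rectangle $R = I \times J \subseteq \supp(M)$ and observe that $R$, viewed as a set of vertices of $\theG{M}$, is independent: for any two $(i,j),(k,\ell) \in R$ we have $i,k \in I$ and $j,\ell \in J$, so $M_{i,\ell} \neq 0$ and $M_{k,j} \neq 0$, meaning the adjacency condition fails, hence the pair is non-adjacent. This shows every rectangle of size $s$ inside $\supp(M)$ gives an independent set of size $s$, so the maximum rectangle size is $\le \alpha(\theG{M})$. In the other direction, I would take a maximum independent set $W \subseteq V(\theG{M})$, let $I := \{\, i : (i,j) \in W \text{ for some } j\,\}$ be the set of rows appearing in $W$ and $J := \{\, j : (i,j) \in W \text{ for some } i\,\}$ the set of columns appearing, and claim $I \times J \subseteq \supp(M)$. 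To see this, pick any $i \in I$ and $j \in J$; choose $j'$ with $(i,j') \in W$ and $i'$ with $(i',j) \in W$. If $(i,j') = (i',j)$ then $i = i'$, $j = j'$ and $M_{i,j} \neq 0$ since that vertex lies in $W \subseteq V(\theG{M})$; otherwise $(i,j')$ and $(i',j)$ are distinct vertices of $W$, hence non-adjacent, so neither of the two "obstruction" equalities holds — in particular $M_{i,j} \neq 0$ (this is the entry $M_{i,j}$ with $i$ from the first vertex and $j$ from the second). Thus $I \times J$ is a rectangle inside $\supp(M)$ and it contains $W$, so it has at least $|W| = \alpha(\theG{M})$ entries; combined with the first inequality, equality follows.

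The only subtle point — and the one I would write out most carefully — is the second direction, where from an independent set $W$ one must verify that the "bounding box" rectangle $I \times J$ it spans actually lies entirely inside $\supp(M)$; this is exactly where non-adjacency of the pair $(i,j'),(i',j)$ is used, and one has to be careful about the degenerate case $i = i'$ or $j = j'$ (handled above by noting the vertex itself is in $\supp(M)$). Everything else is a direct unwinding of the definitions, so I would expect this to be a short argument, consistent with the excerpt's remark that the easy proofs are left to the reader. Once the correspondence is established, the equality of the two maxima is immediate, and no extra estimates are needed.
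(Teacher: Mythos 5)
Your proof is correct and is the natural argument the paper has in mind (the paper omits this proof, noting only that it is easy). Both directions are handled properly, including the degenerate case $i=i'$, $j=j'$; the only imprecision is the framing phrase ``bijective correspondence'' at the start --- what you actually (correctly) establish is a pair of inequalities, not a bijection, since distinct independent sets can span the same bounding-box rectangle.
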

%

The concept of \emph{generalized fooling sets} has been proposed by Dietzfelbinger et al.~\cite{DietzfelbingerHromkovicSchnitger96} as a lower bound on the nondeterministic communication complexity of a Boolean function.  In the graph-coloring terminology, a \emph{$k$-fooling set} is an induced subgraph~$H$ of $\theG{M}$ for which $\alpha(H) \leqslant k$ holds.  If a $k$-fooling set on $s$ entries of $\supp(M)$ can be found, then, clearly, $s/k$ is a lower bound on the rectangle covering number.  Their \emph{generalized fooling set lower bound} on the rectangle covering number is then just the supremum of all these fractions $s/k$, and it coincides with the independence ratio.

Whereas the fooling set / clique lower bound can be arbitrarily bad (see the example at the end of Section~\ref{sssec:lb:clique:limit}), this is not the case for the independence ratio.  The following is true for general graphs (Lov\'{a}sz~\cite[Thm.~7]{Lovasz_greedy}). It has been rediscovered  by Dietzfelbinger et al.~\cite{DietzfelbingerHromkovicSchnitger96} for the rectangle graphs of 0/1-matrices. Lov\' {a}sz' argument (specialized from the more general setting of hypergraph coverings he in fact considers) proceeds by analyzing the following greedy heuristic for coloring $G$: pick a maximum stable set $S_1$ in $G$, then a maximum stable set $S_2$ in $G-S_1$, and so on. Denoting by $w_k$ the maximum number of nodes in~$G$ containing no stable set of size larger than~$k$ and by $t_i$ the number of stable sets of exactly size~$i$ produced by the greedy heuristic, one finds that $\sum_{i=1}^kit_i\le w_k$ holds for all~$k=1,\dots,\alpha=\alpha(G)$. Adding up these inequalities scaled by $1/k(k+1)$ for $k<\alpha$ and by $1/\alpha$ for $k=\alpha$ one obtains the upper bound $\sum_{i=1}^{\alpha-1}\tfrac{1}{i(i+1)}w_i+\tfrac{1}{\alpha}w_{\alpha}=O\bigl( \iota(G)\log(\nvtx{G}) \bigr)$ on the number $\sum_{i=1}^{\alpha}t_i$ of stable sets generated by the greedy procedure. 

\begin{lem}
  For all graphs $G$, we have
  $\displaystyle \chi(G) = O\bigl( \iota(G)\log(\nvtx{G}) \bigr)$.
  \qed
\end{lem}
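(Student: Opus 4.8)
The plan is to analyze the classical greedy coloring heuristic sketched above: repeatedly pull a maximum stable set out of what remains of $G$, producing stable sets $S_1, S_2, \dots, S_N$ of non-increasing sizes that together form a proper coloring, so that $\chi(G) \le N$. Writing $\alpha := \alpha(G)$ and letting $t_i$ be the number of the $S_j$ of size exactly $i$, the whole task becomes bounding $N = \sum_{i=1}^{\alpha} t_i$ from above by $O(\iota(G)\log\nvtx{G})$.

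The first step is to establish, for each $k \in \{1,\dots,\alpha\}$, the key inequality $\sum_{i=1}^{k} i\,t_i \le w_k$, where $w_k$ is the largest number of vertices in an induced subgraph $G[U]$ with $\alpha(G[U]) \le k$. The reason is that, since the $S_j$ emerge in non-increasing order of size, the union $U_k$ of all $S_j$ of size at most $k$ lies inside the vertex set of the graph $G'$ remaining just before the first such $S_j$ is removed; as that $S_j$ is a maximum stable set of $G'$, we get $\alpha(G[U_k]) \le \alpha(G') \le k$, hence $\nvtx{U_k} = \sum_{i=1}^{k} i\,t_i \le w_k$. One then records the trivial consequence of the definition of $\iota(G)$ that $w_k \le k\,\iota(G)$.

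The second step is the telescoping combination: add the inequality for index $k$ scaled by $\tfrac{1}{k(k+1)}$ for $k = 1,\dots,\alpha-1$, and scaled by $\tfrac{1}{\alpha}$ for $k = \alpha$. On the left, the coefficient of $i\,t_i$ becomes $\sum_{k=i}^{\alpha-1}\tfrac{1}{k(k+1)} + \tfrac{1}{\alpha} = \bigl(\tfrac{1}{i} - \tfrac{1}{\alpha}\bigr) + \tfrac{1}{\alpha} = \tfrac{1}{i}$, so the left side collapses exactly to $\sum_{i=1}^{\alpha} t_i = N$. On the right, using $w_k \le k\,\iota(G)$, the combination is at most $\iota(G)\bigl(1 + \sum_{k=1}^{\alpha-1}\tfrac{1}{k+1}\bigr) = \iota(G)\,H_\alpha$, where $H_\alpha$ is the $\alpha$-th harmonic number. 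Since $\alpha \le \nvtx{G}$ and $H_\alpha = O(\log\alpha)$, this yields $\chi(G) \le N \le \iota(G)\,H_\alpha = O\bigl(\iota(G)\log\nvtx{G}\bigr)$, as desired.

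The one genuinely delicate point --- the step I would watch most carefully --- is the verification that $\alpha(G[U_k]) \le k$: one must correctly identify the intermediate graph $G'$ to which $G[U_k]$ is compared and invoke the maximality of the greedy choice there. Everything after that is a mechanical summation. I would also handle the degenerate cases separately (for instance $G$ edgeless, or $\alpha(G) = 1$ where $G$ is complete so $\chi(G) = \nvtx{G} = \iota(G)$), so that the harmonic-number estimate is never applied in a vacuous situation.
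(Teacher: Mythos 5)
Your proposal is correct and takes essentially the same route as the paper, which presents Lov\'asz's greedy argument (with the inequality $\sum_{i=1}^k i\,t_i \le w_k$ and the $\tfrac{1}{k(k+1)}$ scaling) in the paragraph immediately preceding the lemma and then states the lemma without further proof. You have simply filled in the details of that exact sketch, correctly.
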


\subsubsection{The Cube Revisited}

We now give an alternative proof of Proposition~\ref{prop:lb:clique:cube}, based on the independence number of $\theG{P}$ instead of the clique number.

\begin{proof}[2nd Proof of Proposition~\ref{prop:lb:clique:cube}]
  The maximal {rectangles} contained in $\supp(M)$ of the facet vs.\ vertex non-incidence matrix $M$ are among those of the form $I \times J$, where $I$ is a collection of facets of $P$ that does not contain a pair of opposite facets and $J = J(I)$ is the set of those vertices of $P$ that belong to none of the facets in $I$. If $q$ denotes the
  cardinality of $I$, the size of such a rectangle is precisely $q \cdot 2^{d-q}$. Indeed, $q$ of the coordinates of the vertices of $J$ are determined
  by $J$, while the other $d-q$ coordinates are free, which implies $|J| = 2^{d-q}$.
  
  As is easily verified, $q 2^{d-q}$ is maximum for $q \in \{1,2\}$. Hence, the maximum size of a {rectangle} in $\supp(M)$ is thus $2^{d-1}$. It follows that $2d$ rectangles are necessary to cover all the ones in $M$, because there are $2d \cdot 2^{d-1}$ ones in $M$.
\end{proof}

\subsubsection{Polytopes with Few Vertices on Every Facet}

As we have seen in Lemma~\ref{lem:lb:clique:zeros} above, the chances of obtaining a good lower bound based on fooling sets are poor if the matrix has few zeros in each row. A similar result is true for generalized fooling sets.

\begin{lem}
  Let $M$ be a real matrix with at most $s$ zeros per row and let $U$ be a $k$-fooling set in $M$.  Then $|U|/k$, namely, the lower bound on $\rc(M)$ given by $U$, is at most $2k^2s$.
\end{lem}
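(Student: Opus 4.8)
The plan is to work entirely inside the rectangle graph and to bound its number of \emph{edges}. Write $H := \theG{M}[U]$ for the induced subgraph of the \thegraph{} on the vertex set $U$, and set $N := |U|$; the hypothesis that $U$ is a $k$-fooling set says exactly that $\alpha(H) \le k$, and the claim $|U|/k \le 2k^2 s$ is equivalent to $N \le 2sk^3$. (We may assume $s \ge 1$: if $s = 0$ then $M$ has full support, $\rc(M) = 1$, and there is nothing to bound.) The first thing I would do is note that each row and each column of $M$ contains at most $k$ entries of $U$: if $(i,j)$ and $(i,\ell)$ both lie in $U$, then $M_{i,j} \ne 0$ and $M_{i,\ell} \ne 0$, so the adjacency criterion ``$M_{i,\ell} = 0$ or $M_{i,j} = 0$'' fails and these two vertices are non-adjacent in $H$; hence the entries of $U$ in a fixed row (or column) form an independent set of $H$, of size at most $\alpha(H) \le k$.

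The core of the argument is the second step, bounding $|E(H)|$. Given an edge $e = \{(i,j),(i',j')\}$ of $H$, the first step forces $i \ne i'$ and $j \ne j'$, and the adjacency criterion produces a zero of $M$ at position $(i,j')$ or $(i',j)$; I would fix one such zero and call it the \emph{witness} of $e$. Every witness lies in a row meeting $U$, there are at most $N$ such rows, and each of them has at most $s$ zeros, so all witnesses lie in a set $Z$ of zero positions of $M$ with $|Z| \le sN$. Conversely, a fixed zero $M_{r,c} = 0$ can be the witness of an edge $e$ only if one endpoint of $e$ is an entry of $U$ in row $r$ and the other is an entry of $U$ in column $c$; by the first step each of these has at most $k$ choices, so at most $k^2$ edges share a given witness. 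Combining the two counts gives $|E(H)| \le k^2 |Z| \le sk^2 N$.

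The final step is then immediate: $H$ has $N$ vertices and at most $sk^2 N$ edges, so by Tur\'an's Theorem~\cite{Turan41} (in the form $\alpha(G) \ge |V(G)|^2/(2|E(G)|)$) we get $\alpha(H) \ge N^2/(2sk^2 N) = N/(2sk^2)$; since $\alpha(H) \le k$ this yields $N \le 2sk^3$, as desired. I expect the second step to be the only genuine obstacle. The subtle point is that in an arbitrary matrix a single zero could be responsible for unboundedly many non-adjacencies of the \thegraph{}, so the estimate $|E(H)| \le sk^2 N$ really uses the $k$-fooling hypothesis twice: once (through the first step) to cap at $k^2$ the number of edges any one zero can witness, and once to confine every witness to one of the at most $N$ rows that meet $U$, each carrying at most $s$ zeros. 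Everything else is bookkeeping together with a one-line appeal to a standard density bound.
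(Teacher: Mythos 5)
Your argument is correct and follows essentially the same route as the paper's: bound each row/column to at most $k$ entries of $U$, assign each edge of $H$ a "witness" (the paper says "responsible") zero, observe that those zeros number at most $s|U|$ and each accounts for at most $k^2$ edges, and close with Tur\'an's theorem. (Both you and the paper invoke Tur\'an in the slightly optimistic form $|E(H)|\ge |U|^2/(2\alpha(H))$, where the exact bound is $|U|^2/(2\alpha)-|U|/2$; this only perturbs the final constant by an additive $O(1)$ and is immaterial.)
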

\begin{proof}
  Let $U$ be a set of vertices of $\theG{M}$ which is a $k$-fooling set, i.e. the subgraph $H$ of $\theG{M}$ induced by $U$ has $\alpha(H) \leqslant k$.  By Tur\'an's theorem, we have
  \begin{equation*}  
    |E(H)| \geqslant \frac{|U|^2}{2k}\,.
  \end{equation*}

  Each edge of $H$ links two entries of $M$ in distinct rows and columns such that at least one of the two other entries in the $2 \times 2$ rectangle spanned by these entries is zero. We say that this (these) zero(s) are \emph{responsible} for the edge. Thus for each edge of $H$ there is at least one responsible zero (and at most two responsible zeros), located in the rectangle spanned by $U$. Since every row or column contains at most $k$ elements of $U$, each zero in the rectangle spanned by $U$ is responsible for at most $k^2$ edges of~$H$. Hence, the number of edges in $H$ is at most $k^2$ times the number of zeros in the rectangle spanned by $U$. Thus, since $U$ covers at most $\lvert U\rvert$ rows, which together contain at most $\lvert U\rvert s$ zeros, we have
  \begin{equation*}
    |E(H)| \leqslant k^2 |U| s\,,
  \end{equation*}
  and we conclude that 
  \begin{equation*}
    \frac{\lvert U \rvert}{k} \leqslant 2k^2 s\,.
  \end{equation*}
\end{proof}

\subsection{Fractional Chromatic Number}\label{sec:fracChrom}

Another well-studied lower bound for the chromatic number of a graph is the \emph{fractional} chromatic number. This number can be defined in several equivalent ways. The most convenient is probably the following:  $\chi^*(G)$ is the solution of the following linear program:
\begin{subequations}\label{eq:lb:chi-star}
  \begin{equation}
    \min \sum_S x_S \\
  \end{equation}
  \begin{align}
      \sum_{S\colon v\in S} x_S &\geqslant 1 \quad\text{for every vertex } v \text{ of } G\\
      x_S &\geqslant 0 \quad\text{for every stable set } S \text{ of } G,
    \end{align}
\end{subequations}
where the sums extend over all stable sets $S$ of the graph $G$.  In the context of nondeterministic communication complexity, this bound is
well-known, see for example~\cite{KarchmerKushilevitzNisan95} or \cite{KushilevitzNisan97} and the references therein.  In particular, it is known that $\chi(M) = O(\chi^*(\theG{M} \log \nvtx{\theG{M}}))$.  The following known fact from graph coloring (see e.g.~\cite[p.\ 1096]{SchrijverBookB03}) improves this slightly.

\begin{lem}
  For all graphs $G$, it is true that $\displaystyle \chi(G) \leqslant (1+\ln \alpha(G))\chi^*(G)$.
  \qed
\end{lem}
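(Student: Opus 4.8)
The plan is to analyze exactly the greedy colouring heuristic already discussed just before the lemma — iteratively delete a \emph{maximum} stable set — but instead of charging its cost against $\nvtx{G}$ crudely, I would charge it against an optimal \emph{fractional} cover by stable sets; the refined bookkeeping is what turns $\log\nvtx{G}$ into $1+\ln\alpha(G)$. Throughout I may assume $V(G)\neq\varnothing$ (otherwise everything is $0$), so $\alpha(G)\geqslant 1$.

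Recall that $\chi(G)$ is the least number of stable sets covering $V(G)$ and that $\chi^*(G)$ is the value of the relaxation~\eqref{eq:lb:chi-star}; fix an optimal fractional solution $(x_S)$, so $\sum_S x_S=\chi^*(G)$ and $\sum_{S\ni v}x_S\geqslant 1$ for every vertex $v$. Run the greedy heuristic, producing stable sets $S_1,S_2,\dots$: writing $R_{t-1}$ for the set of still-uncovered vertices at the start of step $t$, pick $S_t$ maximizing $|S_t\cap R_{t-1}|$. Price a vertex $v$ by $p_v:=1/|S_{t(v)}\cap R_{t(v)-1}|$, where $t(v)$ is the step in which $v$ first gets covered. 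Then the number of greedy steps equals $\sum_{t}\sum_{v\in S_t\cap R_{t-1}}p_v=\sum_v p_v$, and since the stable sets produced form a cover we get $\chi(G)\leqslant\sum_v p_v$. Using $\sum_{S\ni v}x_S\geqslant 1$ and $p_v\geqslant 0$ and swapping the order of summation,
\[
\chi(G)\;\leqslant\;\sum_v p_v\;\leqslant\;\sum_v p_v\!\!\sum_{S\ni v}x_S\;=\;\sum_S x_S\Bigl(\sum_{v\in S}p_v\Bigr),
\]
so it suffices to show $\sum_{v\in S}p_v\leqslant 1+\ln\alpha(G)$ for every stable set $S$.

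To prove that, fix a stable set $S$ and set $a_t:=|S\cap R_{t-1}|$; this is a non-increasing sequence with $a_1=|S|$ that eventually reaches $0$. The key observation is that $S\cap R_{t-1}$ is itself a stable set contained in $R_{t-1}$, hence an admissible candidate in step $t$, so maximality forces $|S_t\cap R_{t-1}|\geqslant a_t$; therefore each of the $a_t-a_{t+1}$ vertices of $S$ covered in step $t$ is priced by at most $1/a_t$. With $H_m:=\sum_{i=1}^m 1/i$ and $H_0:=0$, and using that $1/a_t\leqslant 1/j$ for each of the $a_t-a_{t+1}$ integers $j$ with $a_{t+1}<j\leqslant a_t$,
\[
\sum_{v\in S}p_v\;\leqslant\;\sum_t\frac{a_t-a_{t+1}}{a_t}\;\leqslant\;\sum_t\bigl(H_{a_t}-H_{a_{t+1}}\bigr)\;=\;H_{|S|}\;\leqslant\;H_{\alpha(G)}\;\leqslant\;1+\ln\alpha(G),
\]
the equality being a telescoping sum and the final estimate the standard $H_m\leqslant 1+\ln m$. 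Combining this with the previous display gives $\chi(G)\leqslant(1+\ln\alpha(G))\sum_S x_S=(1+\ln\alpha(G))\chi^*(G)$, as claimed.

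The only delicate point — and the reason the bound features $\ln\alpha(G)$ rather than $\ln\nvtx{G}$ — is the per-stable-set accounting in the last step: one must resist arguing merely that each greedy step shrinks $R_{t-1}$ by a factor $1-1/\chi^*(G)$ (which only yields $\chi^*(G)\log\nvtx{G}$), and instead track, separately for each stable set $S$ appearing with positive weight in the fractional solution, the rate at which $S$'s own vertices are absorbed, exploiting that $S\cap R_{t-1}$ is always available to the greedy choice. Everything else — the pricing identity, the summation swap, the telescoping, and the $V(G)=\varnothing$ edge case — is routine.
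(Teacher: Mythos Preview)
Your proof is correct. It is the standard Lov\'asz--Stein--Chv\'atal dual-fitting analysis of the greedy set cover heuristic, specialized to the stable-set cover formulation of graph colouring: price each vertex by the reciprocal of the batch in which it is absorbed, charge the total price against the fractional optimum, and bound the per-set contribution by a harmonic sum using the maximality of the greedy choice. All steps are sound; the telescoping bound $\sum_t (a_t-a_{t+1})/a_t\leqslant H_{|S|}$ and the estimate $H_m\leqslant 1+\ln m$ are standard, and the edge case $V(G)=\varnothing$ is handled.

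Note that the paper does not supply its own proof of this lemma: it is stated with a \qed{} and a pointer to Schrijver~\cite[p.~1096]{SchrijverBookB03}. The greedy heuristic you analyze is the same one the paper sketches in the paragraph preceding the \emph{previous} lemma (on $\chi(G)=O(\iota(G)\log\nvtx{G})$), but there the bookkeeping is done via the quantities $w_k$ and $t_i$ and the Abel-summation rescaling, whereas you run the per-stable-set accounting against a fixed fractional optimum. So there is nothing in the paper to compare against directly; your argument is precisely the one the cited reference gives, and it is complete as written.
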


Clearly, if integrality conditions are imposed on the variables in the linear program~\eqref{eq:lb:chi-star} defining $\chi^*(G)$, then the optimal value is the
chromatic number $\chi(G)$.  

\subsection{Neighborly $d$-Polytopes with $\Theta(d^2)$ Vertices}\label{ssec:lb:hypergraph-vertex-cover}

Suppose $d$ is even and let $k := d/2$. Consider a neighborly polytope $P$ in $\mathbb{R}^d$ with vertex set $V := \vertexset(P)$. Let $n := |V|$ denote the number of vertices of $P$. We assume that $n = \Theta(d^2) = \Theta(k^2)$, that is, $k$ and $d$ are both $\Theta(\sqrt{n})$.

Consider a non-incidence matrix that has one row for each facet, as well as one row for each $(k-1)$-face, and one row per vertex. Let $M$ denote the submatrix induced on the rows that correspond to the $(k-1)$-faces. Thus the rows of $M$ are indexed by the $k$-subsets of $V$ (that is, the vertex sets of $(k-1)$-faces of $P$), and the columns of $M$ are indexed by the elements of $V$ (that is, the vertices of $P$). The entry of $M$ for $k$-set $F \subseteq V$ and vertex $v \in V$ is $1$ if $v \notin F$ and $0$ otherwise. It follows from Lemma~\ref{lem:rc_independent_of_S} and the monotonicity of the rectangle covering number on submatrices that $\xc{P} \geqslant \rc(M)$.

\begin{prop}\label{prop:lb:neigh}
  Let $n, k$ be positive integers and $M$ denote the ${n \choose k} \times n$ matrix defined above. Then $\rc(M) = \Omega\lt(\min\left\{n,\frac{(k+1)(k+2)}{2}\right\}\rt)$.
\end{prop}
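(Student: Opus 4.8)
The plan is to make the combinatorial content of $\rc(M)$ completely explicit, reduce it to a question about cover-free set systems, and then estimate that quantity. \textbf{First}, I would pin down the $1$-rectangles of $M$. Since $M_{F,v}=1$ exactly when $v\notin F$, a set of positions $I\times J$ is a $1$-rectangle iff every row-set $F\in I$ is disjoint from $J$; writing $A:=\ints{n}\setminus J$ this says $F\subseteq A$ for all $F\in I$, so $I\times J$ is contained in $R_A:=\setDef{F\subseteq A}{|F|=k}\times(\ints{n}\setminus A)$, and $R_A$ is itself a $1$-rectangle as soon as $|A|\ge k$. Hence a minimum rectangle covering of $M$ may be taken to consist of rectangles $R_{A_1},\dots,R_{A_t}$ with $t=\rc(M)$, and the covering property becomes: for every $k$-set $F\subseteq\ints{n}$ and every $v\in\ints{n}\setminus F$ there is an index $a$ with $F\subseteq A_a$ and $v\notin A_a$. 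Setting $B_a:=\ints{n}\setminus A_a$, this reads: \emph{for every $(k+1)$-set $G\subseteq\ints{n}$ and every $v\in G$ there is an $a$ with $B_a\cap G=\{v\}$}.

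\textbf{Next}, I would reformulate this as a cover-free condition. For $v\in\ints{n}$ put $\phi(v):=\setDef{a\in\ints{t}}{v\in B_a}$. For a $(k+1)$-set $G$ and $v\in G$, having some $a$ with $B_a\cap G=\{v\}$ is the same as having some $a\in\phi(v)$ lying in no $\phi(u)$ with $u\in G\setminus\{v\}$, so the displayed property is equivalent to: no $\phi(v)$ is contained in the union of $k$ of the other $\phi(u)$'s. In particular the sets $\phi(v)$ are pairwise distinct, and $\setDef{\phi(v)}{v\in\ints{n}}$ is a \emph{$k$-cover-free family} of $n$ sets over the ground set $\ints{t}$; conversely, from any such family the sets $B_a:=\setDef{v}{a\in\phi(v)}$ yield a rectangle covering of $M$ of size $t$. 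Thus $\rc(M)$ equals the least $t$ for which a $k$-cover-free family with $n$ members over $\ints{t}$ exists — the optimal length of a superimposed code of strength $k$ with $n$ codewords; this is where the $B_a$'s are being used as a (hypergraph) cover.

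\textbf{Finally}, for the lower bound I would argue as follows. One may assume $n\ge k+1$, since otherwise $M$ has no nonzero entry. If $n>\binom{k+2}{2}$, restrict $M$ to the rows indexed by the $k$-subsets of a fixed $\binom{k+2}{2}$-element set $W\subseteq\ints{n}$ and to the columns indexed by $W$; this submatrix is again a matrix of the type in the statement, with ``$n$'' equal to $\binom{k+2}{2}$, so by monotonicity of $\rc$ on submatrices it suffices to treat $n\le\binom{k+2}{2}$, where the bound to prove is $\rc(M)=\Omega(n)$. When $n=k+1$ this is immediate from the reformulation: with only $k$ other members, each member of a $k$-cover-free family must contain an element lying in none of the others, forcing $t\ge n$. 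The essential case is $k+1<n\le\binom{k+2}{2}$, i.e.\ $k=\Theta(\sqrt n)$, where one must show $t=\Omega(k^2)$. Here the naive lower bounds fail: the fooling-set (clique) bound is only $O(k)$ by Lemma~\ref{lem:lb:clique:zeros}, since each row of $M$ has only $k$ zeros, and the fractional covering number — hence also the generalized fooling-set bound — is likewise only $\Theta(k)$, as a uniform weight on the rectangles $R_A$ with $|A|\approx n(1-\tfrac{1}{k})$ already is a fractional covering of value $\Theta(k)$. So one is forced to exploit integrality, obtaining the $\Omega(k^2)$ bound for $k$-cover-free families with this many members by a counting argument on private elements in the style of Dyachkov--Rykov/Ruszinkó: if $t$ were too small then, averaging over $k$-element subfamilies, some member would have no element private to it relative to some $k$ others, contradicting cover-freeness. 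Extracting the quadratic (rather than merely linear) dependence on $k$ from this argument is the main obstacle of the proof.
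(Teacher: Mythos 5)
Your first two steps are sound and in fact illuminating: the reduction to rectangles of the form $R_A=\setDef{F\subseteq A}{|F|=k}\times(\ints{n}\setminus A)$ is correct, and the translation to a $k$-cover-free family is exactly right (provided $n\geqslant k+1$, which one should assume since otherwise $M$ has no nonzero entry). The observation that the fractional relaxation — and hence every bound that factors through the independence ratio, including generalized fooling sets — is stuck at $\Theta(k)$ is also correct and explains why the argument must genuinely exploit integrality. The problem is the last step. You write that ``extracting the quadratic (rather than merely linear) dependence on $k$ from this argument is the main obstacle of the proof,'' and indeed you leave that obstacle unresolved: the averaging-over-$k$-subfamilies sketch you give yields only one private element per member and hence $t=\Omega(k)$, and invoking Dyachkov--Rykov/Ruszink\'o as a black box is not a proof (those theorems are themselves substantial, have their own hypotheses on the range of $n$ relative to $k$, and you would still need to check they deliver the clean $\min\{n,(k+1)(k+2)/2\}$ rather than an asymptotic statement with a $\log k$ loss). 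So the core quantitative lemma is missing.

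The paper's proof fills exactly this gap with a short, self-contained ``staircase'' argument in the language of your $B_a$'s. After splitting off the singleton covering sets (set $X:=\{v:\{v\}\in\mathcal J\}$, $s:=|X|$, and normalize so singletons are disjoint from the rest), pick distinct $v_1,\dots,v_{k+1}\in V\setminus X$. Then for each $\ell\in\ints{k+1}$ the number of $B_a$'s containing $v_\ell$ but none of $v_1,\dots,v_{\ell-1}$ is at least $k+2-\ell$: if it were at most $k+1-\ell$, one could take $F$ to contain $v_1,\dots,v_{\ell-1}$ together with one element $\neq v_\ell$ from each of those few sets and pad to size $k$ inside $V\setminus\{v_\ell\}$, producing a pair $(F,v_\ell)$ that no rectangle covers. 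Summing the telescoping bounds gives $t\geqslant s+\sum_{\ell=1}^{k+1}(k+2-\ell)=s+\frac{(k+1)(k+2)}{2}$; and if fewer than $k+1$ vertices lie outside $X$, one either has $t\geqslant n$ (when $V=X$) or derives a contradiction. This is precisely the quadratic ``many private elements per member'' counting that your proposal identifies as the crux but does not supply. In cover-free terms, it proves the stronger \emph{nested} statement $\bigl|\phi(v_\ell)\setminus\bigcup_{i<\ell}\phi(v_i)\bigr|\geqslant k+2-\ell$ rather than merely one private element per member, and that nesting is where the factor of $k$ is gained.
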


\begin{proof}
  The maximal rectangles of $M$ are of the form $I \times J$, where $J$ is a subset of $V$ of size at most $n-k$ and $I = I(J)$ is
  the collection of all $k$-sets $F$ such that $F \subseteq V - J$. We define a \emph{cover} as a collection $\mathcal{J} = \{J_1, \ldots, J_t\}$ of subsets of
  $V$, each of size at most $n-k$, such that the corresponding rectangles cover $M$. This last condition can be restated as follows: for each pair $(F,v)$ with
  $v \notin F$ there exists an index $\ell \in \ints{t}$ with $v \in J_\ell$ and $F \subseteq V - J_\ell$.

  Consider a cover $\mathcal{J} = \{J_1, \ldots, J_t\}$. For a vertex $v \in V$, consider the collection $\mathcal{J}(v)$ of sets in $\mathcal{J}$ that contain $v$. There are two cases. First, it could be that $\{v\} \in \mathcal{J}(v)$. In this case, by removing $v$ from the other sets in $\mathcal{J}(v)$, we may
  assume that $\mathcal{J}(v) = \{\{v\}\}$. That is, the other sets of $\mathcal{J}$ do not contain $v$. Otherwise, all sets in $\mathcal{J}(v)$ contain at least
  one element distinct from $v$. In this case, it should not be possible to find a $k$-set $F$ contained in $V - \{v\}$ that meets all sets in $\mathcal{J}(v)$,
  because otherwise $\mathcal{J}$ would not cover the pair $(F,v)$. In particular, there are at least $k+1$ sets in $\mathcal{J}(v)$.

  Let $X$ denote the set of the vertices $v$ such that $\{v\} \in \mathcal{J}$, and let $s := |X|$. Pick distinct vertices $v_1$, $v_2$, \ldots, $v_{k+1}$ in $V - X$. This can be assumed to be possible because otherwise $V - X$ has at most $k$ vertices. If $V - X$ is empty, then clearly the cover is trivial and $t \geqslant n$. Else $V - X$ contains an element $v$ and, because all sets in $\mathcal{J}(v)$ are contained in $V - X$, one can easily find a $k$-set $F \subseteq V - \{v\}$ such that $(F,v)$ is not covered by $\mathcal{J}$. 
  
  For all $\ell \in \ints{k+1}$, it holds that the number of sets in $\mathcal{J}$ that contain $v_\ell$ and none of the vertices $v_1$, \ldots, $v_{\ell-1}$ is at least $k+2-\ell$. Otherwise, we could find $k+1-\ell$ elements in $V - \{v_1,\ldots,v_{\ell}\}$ that, added to $\{v_1,\ldots,v_{\ell-1}\}$, would define a $k$-set $F \subseteq V - \{v_\ell\}$ such that $(F,v_\ell)$ is not covered by $\mathcal{J}$.

  Now, the number of sets in $\mathcal{J}$ is at least
  \begin{equation*}
    s + (k+1) + k + (k-1) + \ldots + 1 = s + \frac{(k+1)(k+2)}{2} \geqslant \frac{(k+1)(k+2)}{2}\,.
  \end{equation*}
  The first term counts the number of singletons in $\mathcal{J}$, the second the number of sets containing $v_1$, the third the number of sets containing $v_2$
  but not $v_1$, and so on.
\end{proof}

When $n = \Theta(k^2)$, it follows from Proposition \ref{prop:lb:neigh} that the minimum number of rectangles needed to cover $M$ is $\Theta(n)$. Therefore, in this case the minimum number of facets in an extension of $P$ is $\Theta(n)$. Note that the binary logarithm of the total number of faces is in this case $\Theta(d \log n) = \Theta(d \log d)$. In conclusion, neighborly $d$-polytopes with $\Theta(d^2)$ vertices give a family of polytopes such that the rectangle covering number is super-linear in both the ``dimension'' and the ``binary logarithm of the number of faces'' bounds.

\section{Concluding Remarks}

We conclude this paper with some open problems. We begin by reiterating the first open problem in Yannakakis's paper~\cite{Yannakakis91}. Bounding the rectangle covering number seems to be the  currently best available approach to bound the extension complexity of specific polytopes. (The new bound on the nonnegative rank proposed by Gillis and Glineur~\cite{GillisGlineur10} does not lead to improvements for slack matrices.) Can one find other bounds? In particular, for a given polytope~$P$, consider the smallest number of facets of a polytope~$Q$ into whose face lattice the face-lattice of~$P$ can be embedded meet-faithfully.  Does this number improve substantially on the rectangle covering bound?  Or is it always bounded by a polynomial in the rectangle covering bound for~$P$?

For some polytopes the rectangle covering number even yields optimal or near-optimal lower bounds on the extension complexity as we have demonstrated in this paper for cubes, Birkhoff polytopes, and neighborly $d$-polytopes with $\Theta(d^2)$ vertices. However, for other polytopes such as the perfect matching polytope, the rectangle covering bound seems rather useless. For instance, for sufficiently irregular $n$-gons in~$\R^2$, the extension complexity is bounded from below by $\Omega(\sqrt{n})$, while the rectangle covering number is $O(\log n)$~\cite{FioriniRothvossTiwary11}.

In all the examples we have found so far, the rectangle covering number is always polynomial in $d$ and $\log n$, where $d$ denotes the dimension and $n$ the number of vertices. Can one find polytopes for which the rectangle covering number is super-polynomial in $d$ and $\log n$?

As mentioned above, it is true that $\omega(M) \leqslant(\rk A+1)^2$ for every matrix~$A$ whose support is~$M$, and Dietzfelbinger et
al.~\cite{DietzfelbingerHromkovicSchnitger96} constructed a family of matrices~$M$ with $\rk(M)^{\log 4/\log 3} \leqslant\omega(M)$.  Which of the two bounds can be
improved?

In view of our result on the extension complexity of neighborly $d$-polytopes with~$n$ vertices, it is natural to ask whether the bound $\Omega(d^2)$ can be
improved for $d = o(\sqrt n)$.

Finally, in the absence of better lower bounds on the extension-complexity, it would be interesting to know the rectangle covering bound of the perfect matching polytope.  As mentioned above, it is $O(n^4)$.  Even a small improvement on the trivial lower bound  $\Omega(n^2)$ would be interesting.

\section{Final Note}

After submitting this paper, our third open problem was solved by Fiorini, Massar, Pokutta, Tiwary and de~Wolf \cite{FMPTW12}. They prove among other things that the rectangle covering number of the cut polytope is super-polynomial in the dimension and logarithm of the number of vertices of the polytope.

\section*{Acknowledgments}

We are greatful to the referees whose comments lead to significant improvements in the presentation of the material.

\end{document}